\algnewcommand\LeftComment[1]{$\triangleright$ \eqparbox{COMMENT}{#1} \hfill }
\tikzset{
  prefix after node/.style={prefix after command=\pgfextra{#1}},
  /semifill/ang/.initial=45,
  /semifill/upper/.initial=none,
  /semifill/lower/.initial=none,
  semifill/.style={
    circle, draw,
    prefix after node={
      \pgfqkeys{/semifill}{#1}
      \path let \p1 = (\tikzlastnode.north), \p2 = (\tikzlastnode.center),
                \n1 = {\y1-\y2} in [radius=\n1]
            (\tikzlastnode.\pgfkeysvalueof{/semifill/ang}) 
            edge[
              draw=none,
              fill=\pgfkeysvalueof{/semifill/upper},
              to path={
                arc[start angle=\pgfkeysvalueof{/semifill/ang}, delta angle=180]
                -- cycle}] ()
            (\tikzlastnode.\pgfkeysvalueof{/semifill/ang}) 
            edge[
              draw=none,
              fill=\pgfkeysvalueof{/semifill/lower},
              to path={
                arc[start angle=\pgfkeysvalueof{/semifill/ang}, delta angle=-180]
                -- cycle}] ();}}}
\declaretheorem[numbered=no, name=Maximum Weight Subgraph Problem]{mws-problem}
\newcommand{\poly}{\mathcal{P}}
\DeclareMathOperator{\conv}{conv}
\DeclareMathOperator*{\val}{val}
\newcommand{\R}{\mathbb{R}}
\newcommand{\B}{\mathbb{B}}
\newcommand{\Z}{\mathbb{Z}}
\newcommand{\Q}{\mathbb{Q}}
\newcommand{\NP}{\textsf{NP}}
\newcommand{\Pclass}{\textsf{P}}
\newcommand{\bigO}{\mathcal{O}}
\newcommand{\subpart}{\mathcal{V}}
\pgfplotsset{compat = newest} 
\pgfplotsset{cycle list/Set1-8} 
\newtheorem{theorem}{Theorem}
\newtheorem{proposition}{Proposition}
\newtheorem{lemma}{Lemma}
\newtheorem{corollary}{Corollary}
\providecommand{\keywords}[1]{\textbf{\textit{Keywords:}} #1}
\begin{document}

\title{On the connected (sub)partition polytope\thanks{This research was supported by Internal Funds KU Leuven (Grant C14/22/026).}
}

\author{Phablo F. S. Moura}
\author{Hande Yaman}
\author{Roel Leus}

\affil{Research Center for Operations Research \& Statistics, KU Leuven, Leuven, Belgium \\
\texttt{\{phablo.moura, hande.yaman, roel.leus\}@kuleuven.be}}

\maketitle

\begin{abstract}
Let $k$ be a positive integer and let $G$ be a graph with $n$ vertices.
%We denote by~$V(G)$ and~$E(G)$ the sets of vertices and edges of $G$, respectively. For a positive integer $\ell$, we let $[\ell]=\{1, \ldots, \ell\}$.
A connected $k$-subpartition of $G$ is a collection of $k$ pairwise disjoint sets (a.k.a. classes) of vertices in $G$ such that each set induces a connected subgraph.
The connected $k$-subpartition polytope of $G$, denoted by $\poly(G,k)$, is defined as the convex hull of the incidence vectors of all connected $k$-subpartitions of $G$.
Many applications arising in off-shore oil-drilling, forest planning, image processing, cluster analysis, political districting, police patrolling, and biology are modeled in terms of finding connected (sub)partitions of a graph.
This study focuses on the facial structure of~$\poly(G,k)$ and the computational complexity of the corresponding separation problems.
We first propose a set of valid inequalities having non-zero coefficients associated with a single class that extends and generalizes the ones in the literature of related problems, show sufficient conditions for these inequalities to be facet-defining, and design a polynomial-time separation algorithm for them.
We also devise two sets of inequalities that consider multiple classes, prove when they define facets, and study the computational complexity of associated separation problems. 
Finally, we report on computational experiments showing  the usefulness of the proposed inequalities.
% \subclass{MSC 90C57 \and MSC 68W40}
\end{abstract}

\keywords{Connectivity problems, Polyhedral study, Facets,  Separation algorithms, Branch and cut}

\section{Introduction}

Let $k$ be a positive integer and let $G$ be a graph with $n$ vertices.
We denote by~$V(G)$ and~$E(G)$ the sets of vertices and edges of $G$, respectively. For a positive integer $\ell$, we let $[\ell]=\{1, \ldots, \ell\}$.
A \emph{connected $k$-subpartition} $\subpart=\{V_1, \ldots, V_k\}$ of $G$ is a collection of $k$ pairwise disjoint sets of vertices in $V(G)$ such that each set induces a connected subgraph, that is, $V_i \cap V_j = \emptyset$ for all $i,j \in [k]$ with $i\neq j$ and the subgraph of $G$ induced by $V_i$, denoted by $G[V_i]$, is connected for every $ i  \in [k]$.
A collection $\subpart$ is a \emph{connected $k$-partition} if additionally $\bigcup_{i \in [k]} V_i = V(G)$.

Given a set of vertices $V'\subseteq V(G)$ and $c \in [k]$, we define the vector $e(V', c) \in \{0,1\}^{kn}$ such that its non-zero entries are precisely $e(V',c)_{v,c}= 1$ for all $v \in V'$. 
For the sake of simplicity, we write $e(H,c)$ rather than  $e(V(H), c)$ when $H$ is a graph.
Let $\subpart=\{V_1, \ldots, V_k\}$ be a $k$-subpartition  of $G$.
The non-zero entries of its \emph{incidence vector} $\chi(\subpart)$  are precisely $\chi(\subpart)_{v,c} =1$ for every $c \in [k]$, $v \in V_c$, that is, $\chi(\subpart) = \sum_{c\in [k]} e(V_c,c)$.
The connected $k$-subpartition polytope of $G$, denoted by $\poly(G,k)$, is defined as
\[\poly(G,k) = \conv \{\chi(\subpart) \in \{0,1\}^{nk} \colon \subpart \text{ is a connected $k$-subpartition of $G$}\}.\]

Many combinatorial problems deal with finding (sub)partitions of the set of vertices of a graph into classes that induce connected subgraphs, or equivalently integer points in the polytope~$\poly$.
A well-studied example is the \textsc{Maximum Weight Connected Subgraph} problem, which receives as input a vertex-weighted graph $G$ (weights may be negative), and consists in finding a connected subgraph of $G$ that maximizes the sum of the weights of its vertices 
(see {\'Alvarez-Miranda et al.~\cite{Alvares-Miranda2013}}, and Hochbaum and Pathria~\cite{hochbaum1994node}).

\textsc{Convex Recoloring} is another problem involving partitioning graphs into connected subgraphs that has received considerable attention in the last years (see Camp\^elo et al.~\cite{CAMPELO202254}, and Chopra et al.~\cite{chopra2017extended}).
This problem was introduced by Moran and Snir~\cite{MorSni08}, and consists in finding a (re)coloring of a given colored graph such that each color class in the recoloring induces a connected subgraph, and the number of recolored vertices is minimized.
Note that the concept of coloring here differs from the classical definition of (proper) coloring, where pairs of vertices linked by an edge are assigned distinct colors.

In the \textsc{Balanced Connected Partition} problem, the goal is to partition a graph into a given number of connected subgraphs so that the size of every subgraph is roughly the same.
This concept of balance can be modeled, for example, as minimizing (resp.\ maximizing) the size of the largest (resp.\ smallest) subgraph in the partition, or  minimizing the maximum difference of sizes between two subgraphs.
These objectives yield equivalent problems when one has to partition into two classes, but lead to distinct problems when the desired partition has more than two subgraphs 
(see Lucertini et al.~\cite{LucPerSim93}).
A related problem investigated by Lovász~\cite{Lovasz77} and Gy{\"o}ri~\cite{Gyori78} consists in partitioning a connected graph into a given number of connected subgraphs with prescribed orders.

Another example of connected partition problem is \textsc{Connected Maximum $k$-Cut},  a variant of the classical \textsc{Maximum $k$-Cut} in which the goal is to find a partition of the vertices into $k$ classes inducing connected subgraphs such that the number of edges linking vertices in distinct classes is maximized.
Recently, Hojny et al.~\cite{Hojny21} proposed mixed-integer linear programming formulations for an edge-weighted version of this problem, and reported on extensive computational experiments.

The idea of dividing graphs into connected parts (classes) plays an essential role in many applications arising in off-shore oil-drilling, forest planning, image processing, cluster analysis, political districting, police patrolling, and biology.
For more details on these applications, we refer to~\cite{ArrMarPenRic21,Carvajal13,hochbaum1994node,LucPerSim93,MarSimNal97,MIYAZAWA2021826,MorSni08}.

In this work, we study the facial structure of the connected $k$-subpartition polytope, which is naturally modeled as follows.
Given any pair of non-adjacent vertices $\{u,v\} \notin E(G)$, a $u,v$-\emph{separator} is a set of vertices $Z\subseteq V(G)\setminus \{u,v\}$ such that $u$ and $v$ belong to different components of $G-Z=G[V(G)\setminus Z]$. 
Let us denote by~$\Gamma(u,v)$ the set of all minimal $u,v$-separators in $G$.
Consider now the following inequalities on variables~$x \in \R^{nk}$.
\begin{align}  
    \:& \sum_{c \in [k]} x_{v,c}  \leq 1 &  \!\!\! \!\!\! \forall v \in V,\label{ineq:cover} \\ 
    \:& x_{u,c} +  x_{v,c} - \sum_{z \in Z} x_{z,c}  \leq 1 &  \!\!\! \!\!\! \forall  
    \{u,v\} \notin E(G), Z \in \Gamma(u,v), c \in [k],\label{ineq:cut} \\
    \:& x_{v,c} \ge 0 & \!\!\! \!\!\! \forall v \in V \text{ and } c \in [k].\label{ineq:non-negativity} 
\end{align}

Let us denote by $\poly'(G,k)$ the set of all vectors~$x \in \R^{nk}$ satisfying~\eqref{ineq:cover}-\eqref{ineq:non-negativity}.
Observe that if two non-adjacent vertices $u$ and $v$ belong to a same class~$c$, then there must exist a path between $u$ and $v$ such that every internal vertex also belongs to class $c$, and such a path intersects every $u,v$-separator in $G$. 
It follows from this remark that \(\poly(G,k) = \conv(\poly'(G,k)\cap \{0,1\}^{nk})\).

The facial structure of the polytope $\poly(G,k)$ was studied before by Camp\^elo et al.~\cite{CAMPELO2013} in the context of a polyhedral approach to \textsc{Convex Recoloring}.
Chopra et al.~\cite{chopra2017extended} provided a strong extended formulation for the problem on trees, and reported on computational experiments.
We remark that \textsc{Convex Recoloring} is $\NP$-hard even on paths, as shown by Moran and Snir~\cite{MorSni08}. 
 Indeed, Moura and Wakabayashi~\cite{MOURA2020252} proved that this problem on n-vertex bipartite graphs cannot be approximated within a factor of $n^{1-\varepsilon}$ for any $\varepsilon > 0$, unless $\Pclass=\NP$.

Wang et al.~\cite{wang2017imposing} presented a detailed polyhedral study for the case  $k=1$, which is the so-called \emph{connected subgraph polytope}.
More precisely, assuming that $G$ is connected, they showed that~\eqref{ineq:cut} induces a facet of $\poly(G,1)$ if and only if $Z$ is minimal.
They also proved that the \emph{indegree} inequalities, known to induce all nontrivial facets of $\poly(G,1)$ when $G$ is a tree (see Korte et al.~\cite{korte2012greedoids}), 
are valid for general graphs and can be separated in linear time. 
The indegree inequalities can be generalized to the case of positive $k$ in a trivial way as follows. 
Consider an orientation~$\vec E$ of the edges in $G$, and define, for each $v \in V(G)$, the \emph{ indegree} $d(v)$ as the number of arcs in $\vec E$ with head~$v$.
The indegree inequality corresponding to~$\vec E$  and $c \in [k]$ is
\begin{equation}\label{ineq:indegree}
    \sum_{v \in V(G)} (1-d(v))x_{v,c} \leq 1.
\end{equation}

The validity of this inequality follows from the fact that $\sum_{v \in V(G')} d(v) \geq |E(G')|$, and $|E(G')|\geq |V(G')|-1$ for any connected subgraph~$G'$ of $G$.
Unlike for the case of $k=1$, for $k\ge 2$ the indegree inequalities and nonnegativity inequalities~\eqref{ineq:non-negativity} are not sufficient to describe $\poly(G,k)$ when $G$ is a tree, as \textsc{Convex Recoloring} is \NP-hard on paths. 
This simple observation suggests that the facial structure of $\poly(G,k)$ with $k\geq 2$ is significantly more involved compared to the one of $\poly(G,1)$. This work is devoted to studying the facial structure of $\poly(G,k)$, and the computational complexity of the  separation problems associated with the families of valid inequalities.

Before concluding this section, we review the results on the dimension of $\poly(G,k)$ and the trivial inequalities of the formulation previously described.

\begin{theorem}[Camp\^elo et al.\cite{CAMPELO2013,CAMPELO2016}]\label{thm:polytope:basics}
The following hold for $\poly(G,k)$:
    \begin{enumerate}[(i)]
        \item The polytope is full-dimensional, i.e., $dim(\poly(G,k))= nk$.
        \item For every $v \in V(G)$, inequality~\eqref{ineq:cover} is facet-defining if $k>1$, or if $k=1$ and $G$ is connected. \label{thm:case:connected}
        \item For every $v \in V(G)$ and $c \in [k]$, inequality~\eqref{ineq:non-negativity} is facet-defining.
    \end{enumerate}
\end{theorem}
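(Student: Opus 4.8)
The plan is to prove all three parts from the incidence vectors of a few particularly simple subpartitions: the empty subpartition, whose incidence vector is $\mathbf{0}$; the single-vertex subpartitions $e(\{v\},c)$, which place only $v$ in class $c$ and leave every other class empty; and the two-vertex subpartitions $e(\{v\},c)+e(\{u\},c')$ with $c\neq c'$, which place $v$ and $u$ as singletons in two distinct classes. Each of these is a feasible connected $k$-subpartition, since a singleton (and the empty set) induces a connected subgraph, and placing $u$ and $v$ in distinct classes imposes no adjacency requirement between them. For part~(i), I would observe that the $nk+1$ vectors $\mathbf{0}$ and $\{e(\{v\},c): v\in V(G),\, c\in[k]\}$ all lie in $\poly(G,k)$ and are affinely independent, because the differences $e(\{v\},c)-\mathbf{0}$ are exactly the $nk$ standard unit vectors of $\R^{nk}$; this yields $\dim(\poly(G,k))=nk$.

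For parts~(ii) and~(iii) I would use the standard criterion for a valid inequality $a^\top x\le b$ to define a facet of the full-dimensional polytope $\poly(G,k)$: it suffices that the induced face be nonempty and proper and that every equation $\alpha^\top x=\beta$ holding at all feasible points of the face be a scalar multiple of $a^\top x=b$. For~(ii), let $F$ be the face induced by~\eqref{ineq:cover}, consisting of the feasible points with $\sum_{c\in[k]}x_{v,c}=1$, and suppose $\alpha^\top x=\beta$ on $F$. If $k>1$, then evaluating at $e(\{v\},c)\in F$ for each $c$ gives $\alpha_{v,c}=\beta$, and for every $u\neq v$ and class $c'$ we may pick some $c\neq c'$ (possible as $k\ge 2$) and evaluate at $e(\{v\},c)+e(\{u\},c')\in F$ to obtain $\alpha_{u,c'}=0$; hence $\alpha^\top x=\beta\sum_{c}x_{v,c}$, a multiple of~\eqref{ineq:cover}. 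If $k=1$ and $G$ is connected, the face is $\{x:x_{v,1}=1\}$, and I would fix a connected vertex ordering $v=w_0,w_1,\ldots,w_{n-1}$ in which each $w_i$ ($i\ge 1$) has a neighbour among $w_0,\ldots,w_{i-1}$, so that every prefix $S_i=\{w_0,\ldots,w_i\}$ induces a connected subgraph containing $v$; comparing $\alpha^\top x=\beta$ at the adjacent face points $\chi(S_{i-1})$ and $\chi(S_i)$ forces $\alpha_{w_i}=0$ for all $i\ge 1$, while $\chi(S_0)$ gives $\alpha_v=\beta$, so again $\alpha^\top x$ is a multiple of~\eqref{ineq:cover}. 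In either case $F$ contains $e(\{v\},1)$ but not $\mathbf{0}$, so it is a proper nonempty face and~\eqref{ineq:cover} is facet-defining.

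For part~(iii), let $F'$ be the face induced by~\eqref{ineq:non-negativity}, namely the feasible points with $x_{v,c}=0$, and suppose $\alpha^\top x=\beta$ on $F'$. Since $\mathbf{0}\in F'$ we immediately get $\beta=0$, and for every pair $(u,c')\neq(v,c)$ the single-vertex point $e(\{u\},c')$ lies in $F'$ and yields $\alpha_{u,c'}=0$; the only unconstrained coefficient is $\alpha_{v,c}$, so $(\alpha,\beta)$ is a scalar multiple of the coefficient vector of~\eqref{ineq:non-negativity}. As $F'$ contains $\mathbf{0}$ but not $e(\{v\},c)$, it is proper and nonempty, so~\eqref{ineq:non-negativity} is facet-defining.

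I expect the only genuinely graph-theoretic step to be the case $k=1$ of~(ii): connectivity of $G$ is exactly what guarantees the increasing chain of connected subgraphs $S_0\subset S_1\subset\cdots$ containing $v$, which is what lets the argument move between neighbouring points of the face one vertex at a time. Without it the claim is false---if $v$ lies in a proper component, every connected subgraph containing $v$ avoids the other components, so the face lies in additional coordinate hyperplanes and cannot have dimension $n-1$. Everything else reduces to evaluating $\alpha^\top x=\beta$ at singletons, at two-vertex subpartitions, and at $\mathbf{0}$, and should be routine.
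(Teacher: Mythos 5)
The paper does not prove this theorem; it is quoted from Camp\^elo et al.\ \cite{CAMPELO2013,CAMPELO2016}, so there is no in-paper argument to compare against. Your proof is correct and self-contained: the dimension argument via $\mathbf{0}$ and the $nk$ singleton incidence vectors, the indirect facet criterion with singletons and two-singleton subpartitions for $k\ge 2$ and for nonnegativity, and the BFS-type chain of nested connected prefixes for the $k=1$ connected case are all sound, and they rely only on the convention (used implicitly throughout the paper, e.g.\ in the proof of Lemma~\ref{lemma:lifting}) that classes of a connected $k$-subpartition may be empty.
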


One may notice that if $k=1$ and $G$ is not connected, then there exist vertices~$u,v \in V(G)$ belonging to different components of $G$ such that $x_u + x_v \leq 1$ is valid for $\poly(G,1)$, and so inequalities~\eqref{ineq:cover} for $u$ and $v$ are dominated.
This explains why one has to suppose that $G$ is connected in Theorem~\ref{thm:polytope:basics}\ref{thm:case:connected}.
The characterization of the facet-defining inequalities~\eqref{ineq:cut} shown by Wang et al.~\cite{wang2017imposing} also assumes that~$G$ is connected.
Hence, even for the basic inequalities in the formulation, their facetness may depend on the graph being connected.
For the sake of simplicity in our proofs, we henceforth assume that $G$ is a connected graph.

\subsection*{Contributions}

In Section~\ref{sec:single} we introduce a class of inequalities for $\poly(G,k)$ that generalizes both the connectivity inequalities~\eqref{ineq:cut} and the indegree inequalities~\eqref{ineq:indegree}.
The proposed class contains all nontrivial inequalities studied by Wang et al.~\cite{wang2017imposing} for the connected subgraph polytope (i.e., $k=1$),  and by Chopra et al.~\cite{chopra2017extended} and Camp\^elo et al.~\cite{CAMPELO2016} for $\poly(G,k)$ with $k\ge 1$.
Furthermore, it contains new strong valid inequalities.
We also prove sufficient (weak) conditions under which the proposed generalized inequalities are facet-defining for $\poly(G,k)$.
If the input graph~$G$ is a tree, we show that  all facets of $\poly(G,k)$ having non-zero coefficients associated with a single class in $[k]$ are induced by indegree inequalities~\eqref{ineq:indegree}.

The separation problem associated with these inequalities is investigated in Section~\ref{sec:separation-single}.
More precisely, we design a polynomial-time  algorithm that reduces the input instance to a quadratic number of bipartite instances of the \textsc{Minimum Weight Vertex Cover} problem, when the partition defining the inequality is fixed. 
In general, this algorithm runs in~$\bigO(k|V(G)||E(G)|)$ time.
When separating inequalities~\eqref{ineq:cut} and~\eqref{ineq:indegree}, this algorithm achieves the same running times as the corresponding separation routines for these inequalities described in the literature.
The proposed routine runs in $\bigO(k(|V(G)|+|E(G)|))$  time when separating a set of inequalities that strictly contains the indegree inequalities~\eqref{ineq:indegree}.

In Section~\ref{sec:multi}, we propose two sets of valid inequalities, namely multiway and pairing inequalities,  that combine multiple classes.
We characterize the facets induced by multiway inequalities, and present sufficient conditions for the pairing inequalities to be facet-defining.
We show in Section~\ref{sec:separation-multi} that the separation problem associated with multiway inequalities is $\NP$-hard (if the stable set which defines it is given) as this separation boils down to solving the \textsc{Multiway Cut} problem.
Moreover, we show that the separation problem of the pairing inequalities can solved in polynomial time if the graph is a tree.
The proposed algorithm computes a minimum-cost flow in a network which is constructed with the vertices of the input graph and additional vertices corresponding to the classes.

In Section~\ref{sec:experiments}, we investigate the computational impact of the proposed single-class and multiclass inequalities in solving instances generated following 
Wang et al.~\cite{wang2017imposing} for the Maximum-Weight Connected Subgraph problem, as well as on computationally more challenging instances.
The computational results indicate that the proposed inequalities lead to shorter running times and smaller gaps than the inequalities described in the literature.

Concluding remarks and possible directions for further research are discussed in Section~\ref{sec:conclusion}.

 \section{Single-class inequalities} \label{sec:single}

In this section, we consider valid (non-trivial) inequalities for $\poly(G,k)$ where all non-zero coefficients are associated with a same class $c \in [k]$.
Let us denote by $(\pi,\pi_0) \in \R^\ell \times \R$ the inequality $\pi x \leq \pi_0$ where~$x \in \R^\ell$. 
An inequality $(\pi,\pi_0) \in \R^{nk} \times \R$ is said to be a \emph{single class} inequality for $\poly(G,k)$ if $(\pi,\pi_0)$ is a valid inequality for $\poly(G,k)$, and there exists~$c \in [k]$ such that $\pi_{v,b}=0$ for all $v \in V(G)$ and $b \in [k]\setminus\{c\}$.
We say that an inequality of $\poly(G,k)$ is nontrivial if it is different from $(0, 0)$ and from the nonnegativity constraints~\eqref{ineq:non-negativity}.

\subsection{Facets from $\poly(G,1)$ to $\poly(G,k)$}

First we show that every facet-defining inequality  of $\poly(G,1)$ can be naturally lifted to a single-class inequality of $\poly(G,k)$ that is also facet defining.
In particular, we shall see that the connectivity inequalities~\eqref{ineq:cut}  and the indegree inequalities~\eqref{ineq:indegree} induce facets of $\poly(G,k)$ under the same hypothesis shown by Wang et al.~\cite{wang2017imposing} for $\poly(G,1)$.

\begin{lemma}\label{lemma:lifting}
    Let $(\pi, \pi_0) \in \R^n \times \R$ be a nontrivial valid inequality of $\poly(G,1)$ 
     different from $x_{v}\leq 1$ for all $v\in V(G)$.
Let~$k \ge 2$ be an integer, let~$c \in [k]$, and let $(\pi^c, \pi_0 ) \in \R^{nk} \times \R$ be a valid inequality of $\poly(G,k)$, where the (possibly) non-zero entries of $\pi^c$ are precisely $\pi^c_{v,c} = \pi_v$ for all $v \in V(G)$.
    It holds that~$(\pi,\pi_0)$ is facet defining for $\poly(G,1)$ if and only if $(\pi^c, \pi_0 )$ is facet defining for~$\poly(G,k)$.
\end{lemma}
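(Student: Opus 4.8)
The plan is to fix the class $c$ and reduce the statement to a comparison of two face dimensions, using that both polytopes are full-dimensional by Theorem~\ref{thm:polytope:basics}. Concretely, I would work with the faces $F_1 = \{x \in \poly(G,1) : \pi x = \pi_0\}$ and $F_k = \{x \in \poly(G,k) : \pi^c x = \pi_0\}$ and prove $\dim F_1 = n-1 \iff \dim F_k = nk-1$. The structural fact driving everything is that for any connected $k$-subpartition $\subpart = \{V_1, \ldots, V_k\}$ one has $\pi^c \chi(\subpart) = \pi\,\chi(V_c)$, so the lifted inequality $(\pi^c, \pi_0)$ constrains only the class-$c$ block: a vertex $\chi(\subpart)$ lies on $F_k$ exactly when $V_c$ (a connected subgraph, hence a point of $\poly(G,1)$) lies on $F_1$, while the remaining classes are free.

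For the direction ``$(\pi,\pi_0)$ facet $\Rightarrow (\pi^c,\pi_0)$ facet'' I would exhibit $nk$ affinely independent points of $F_k$. Since $\dim F_1 = n-1$, there are $n$ affinely independent tight subgraphs $S_1, \ldots, S_n$ for $(\pi,\pi_0)$; lifting each by placing $S_i$ in class $c$ and leaving the other classes empty yields $n$ affinely independent points $P_1, \ldots, P_n$ of $F_k$. To supply the remaining $n(k-1)$ directions I would, for each vertex $v$ and each class $b \neq c$, choose a tight subgraph $S$ avoiding $v$ and form the point $Q_{v,b}$ placing $S$ in class $c$ and the singleton $\{v\}$ in class $b$; this is a valid connected $k$-subpartition lying on $F_k$. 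Linear independence then follows by projecting the differences $P_i - P_1$ and $Q_{v,b} - P_1$ onto the non-$c$ blocks: the $Q$-differences project to the distinct unit vectors $e(\{v\}, b)$, forcing all their coefficients to vanish, after which affine independence of $S_1, \ldots, S_n$ eliminates the rest.

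For the converse I would argue at the level of tight equations. If $(\pi,\pi_0)$ is not facet-defining then $\dim F_1 \leq n-2$, so the space of linear equations valid on $F_1$ has dimension at least $2$ and contains some $(a,a_0)$ independent from $(\pi,\pi_0)$. The key point is that any equation $a\,x = a_0$ valid on $F_1$ lifts to $a^c x = a_0$ valid on all of $F_k$: writing a point of $F_k$ as a convex combination of connected $k$-subpartitions $\subpart^{(j)}$, the identity $\pi_0 = \sum_j \lambda_j\, \pi\,\chi(V_c^{(j)})$ together with validity ($\pi\,\chi(V_c^{(j)}) \leq \pi_0$) forces each participating class-$c$ set to be tight, hence to satisfy $a\,x = a_0$. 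Since $(a^c,a_0)$ and $(\pi^c,\pi_0)$ are both supported on block $c$ and mirror the independent pair $(a,a_0),(\pi,\pi_0)$, they remain independent, so $F_k$ satisfies two independent equations and $\dim F_k \leq nk-2$, i.e.\ $(\pi^c,\pi_0)$ is not facet-defining.

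The main obstacle is the claim used in the forward direction: for every vertex $v$ there exists a tight subgraph avoiding $v$, which is precisely what makes the $n(k-1)$ extra directions available. This is where the hypotheses enter. If every tight subgraph contained $v$, then $x_v = 1$ would hold throughout $F_1$; since $\dim F_1 = n-1$, the facet hyperplane $\{\pi x = \pi_0\}$ would then coincide with $\{x_v = 1\}$, forcing $(\pi,\pi_0)$ to be a positive multiple of $x_v \leq 1$. Validity (which rules out $x_v \geq 1$) together with the assumption that $(\pi,\pi_0)$ is nontrivial and different from $x_v \leq 1$ excludes this, establishing the claim. I expect the remaining bookkeeping—the affine-independence verification and the equation-lifting computation—to be routine once this claim is secured.
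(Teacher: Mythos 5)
Your proposal is correct and follows essentially the same route as the paper: the forward direction (lifting $n$ affinely independent tight subgraphs to class $c$ and adding, for each vertex $v$ and class $b\neq c$, a tight subgraph avoiding $v$ augmented by the singleton $\{v\}$ in class $b$) is exactly the paper's construction, including the use of the hypothesis that $(\pi,\pi_0)$ differs from $x_v\leq 1$ to guarantee a tight subgraph avoiding each $v$. For the converse the paper projects $nk$ affinely independent tight points onto the class-$c$ block and asserts their affine rank is $n$, whereas you lift valid equations contrapositively; these are dual phrasings of the same observation (that the class-$c$ block of any tight point of $\poly(G,k)$ is a tight point of $\poly(G,1)$), and your version in fact supplies the justification the paper leaves implicit.
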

\begin{proof}
    Suppose first that $(\pi,\pi_0)$ defines a facet $F$ of $\poly(G,1)$.
    Let $\tilde F$ be the face of $\poly(G,k)$ induced by $(\pi^c, \pi_0 )$, that is, $\tilde F = \{x \in \poly(G,k) : \pi^c x = \pi_0\}$.
    By Theorem~\ref{thm:polytope:basics}, we have $\dim(F)=n-1$, and so $F$ contains a set~$X = \{x^j\}_{j \in [n]}$ of affinely independent vectors such that $x^j \in \{0,1\}^n$ for all $j \in [n]$.
    Observe now that, for each $v \in V(G)$, there is a vector $r^v \in X$ such that $r^v_{v}=0$, otherwise $F \subset \{x \in \poly(G,1) : x_v=1\}$, a contradiction to the assumption that $F$ is a facet of $\poly(G,1)$.

    In what follows, we  show how to lift the vectors in $F$ to the space $\{0,1\}^{nk}$.
    For each $j \in [n]$, we define a vector $\tilde x^j \in \{0,1\}^{nk}$ such that, for every~$v \in V(G)$, $\tilde x^j_{v,c} = x^{j}_v$, and $\tilde  x^{j}_{v,b}=0$ for all $b \in [k]\setminus \{c\}$.
    Similarly, for each $v \in V(G)$, let $\tilde r^v=\tilde x^j$ where $r^v=x^j$ for some $j \in [n]$.
    Note  that every vector described above corresponds to a connected $k$-subpartition of $G$ where class $c$ is the single non-empty class, and that it belongs to $\tilde F$.

    Let us define $R = \{\tilde r^v+ e(v,b) : v \in V(G) \text{ and } b \in [k]\setminus \{c\}\}$.
    By the definition of $r^v$, $v$ is not assigned to any class in the corresponding connected $k$-subpartition, and thus $r^v+e(v,b)$ belongs to $\tilde F$ for all $b \in [k]\setminus \{c\}$.
    Let $\tilde X = R\cup \{\tilde x^j\}_{j \in [n]}$, and note that $|\tilde X| = n(k-1)+ n=nk$.
    One may easily verify that $\tilde X \subseteq \tilde F$.
    Since $X$ is an affinely independent set and by the construction of the vectors, it holds that $\tilde X$ is also  affinely independent.
    As a consequence, $\dim(\tilde F) = nk-1$ and so $\tilde F$ is a facet of $\poly(G,k)$.

    Suppose now that $(\pi^c, \pi_0)$ induces a facet $\tilde F$ of $\poly(G,k)$.
    Thus, for every~$v \in V(G)$,  $\tilde F \neq \{ x \in \poly(G,k) :  x_{v,c}=1\}$ since the valid inequality $x_{v,c} \le 1$ is dominated by~\eqref{ineq:cover} if $k\ge 2$.
    Again by Theorem~\ref{thm:polytope:basics}, there are $nk$ affinely independent vectors in $\tilde F$.
    Let us denote by $M$ be the matrix $nk \times nk$ formed by such vectors.
    Consider the submatrix of $M'$ of $M$ formed by the $n$ rows indexed by $c$, and observe that ea ch column of $M'$ belongs to $F$.
    As $M$ contains $nk$ affinely independent rows, the affine rank of $M'$ is $n$.
    Hence there exist $n$ affinely independent vectors in $F$.
    Therefore, $\dim(F)=n-1$ and so $F$ is a facet of $\poly(G,1)$.
 \end{proof}

We next use Lemma~\ref{lemma:lifting} to characterize inequalities~\eqref{ineq:cut} and~\eqref{ineq:indegree} that are facet-defining for $\poly(G,k)$.

\begin{corollary}\label{cor:simple-facets}
    Let $k\geq 2$ be an integer.
    The following hold.
    \begin{enumerate}[(i)]
        \item Inequality~\eqref{ineq:cut} is facet defining for $\poly(G,k)$ if and only if the separator is minimal. \label{cor:item:connectivity}
\item Inequality~\eqref{ineq:indegree} is facet defining for $\poly(G,k)$ if and only if 
        \begin{enumerate}[(a)]
            \item for every $u,v \in V(G)$, there exists at most one directed $u,v$-walk in the corresponding orientation of~$G$; \label{cor:subitem:walks} and
\item the orientation of~$G$ contains at least two vertices of indegree zero.\label{cor:subitem:zero}
        \end{enumerate}
        \label{cor:item:indegree}
    \end{enumerate}
\end{corollary}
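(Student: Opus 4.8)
The plan is to derive both parts from Lemma~\ref{lemma:lifting}, which transfers facet-definingness between $\poly(G,1)$ and $\poly(G,k)$ for every nontrivial single-class inequality that is not of the form $x_{v}\le 1$. For part~\ref{cor:item:connectivity} I would first observe that the class-$c$ connectivity inequality~\eqref{ineq:cut} is exactly the lift $(\pi^c,1)$ of the $k=1$ inequality $x_u+x_v-\sum_{z\in Z}x_z\le 1$, which is valid, nontrivial, and distinct from every $x_w\le 1$ (it carries a $-1$ coefficient). Wang et al.~\cite{wang2017imposing} characterize this inequality as facet-defining for $\poly(G,1)$ precisely when $Z$ is a minimal separator; Lemma~\ref{lemma:lifting} then yields the same characterization for $\poly(G,k)$ with no further work.

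For part~\ref{cor:item:indegree} the effort is concentrated in the corresponding $k=1$ statement, which Lemma~\ref{lemma:lifting} again transfers. The first step is to describe the face defined by $\sum_v(1-d(v))x_v\le 1$ on $\poly(G,1)$: since $\sum_{v\in V'}d(v)=|E(G[V'])|+(\text{arcs entering }V'\text{ from outside})\ge |V'|-1$ for any connected $V'$, equality holds exactly for the vertex sets $V'$ that induce a tree in $G$ and receive no arc from $V\setminus V'$ (the \emph{in-closed subtrees}). I would then note that condition~\ref{cor:subitem:walks} is equivalent to the orientation being an acyclic, diamond-free orientation (a directed cycle yields infinitely many walks, and two distinct directed paths yield two walks), and argue necessity: a directed cycle, or a pair of distinct directed $u,v$-paths, together with in-closedness, forces any tight set containing the relevant vertex (a cycle vertex, resp.\ the common endpoint) to contain an undirected cycle, so that some coordinate $x_w$ vanishes on the whole face and it cannot be a facet.

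For sufficiency under~\ref{cor:subitem:walks}, the key construction is, for each $v$, the in-closure $D[v]$ of $\{v\}$ (the ancestors of $v$ together with $v$), which is in-closed and connected by definition. I expect the crux to be showing that $D[v]$ induces a \emph{tree}: an undirected cycle inside $D[v]$ would have a local source $s$ emitting two cycle-arcs, each extendable to $v$ within $D[v]$, yielding two distinct directed $s,v$-paths and contradicting~\ref{cor:subitem:walks}. Granting this, every $D[v]$ is a tight set; ordering the vertices topologically makes the matrix $\big(\chi(D[v])_u\big)_{v,u}$ lower triangular with unit diagonal, so its $n$ rows are linearly, hence affinely, independent and the face has dimension $n-1$. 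This establishes the $k=1$ equivalence ``facet $\Leftrightarrow$~\ref{cor:subitem:walks}''.

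Finally I would account for condition~\ref{cor:subitem:zero}, which is exactly what separates the $k\ge 2$ claim from the $k=1$ one. Under~\ref{cor:subitem:walks}, a short argument shows that a unique indegree-zero vertex $r$ is equivalent to $d(v)=1$ for all $v\ne r$, i.e.\ to the indegree inequality collapsing to $x_{r,c}\le 1$: any $v\ne r$ with two in-neighbours would, since both are reachable from the unique source $r$, produce two distinct directed $r,v$-paths. For $k\ge 2$ this degenerate inequality is dominated by the cover inequality~\eqref{ineq:cover}, hence never facet-defining, and it is precisely the case Lemma~\ref{lemma:lifting} excludes; outside this case the inequality is nontrivial and distinct from every $x_{w,c}\le 1$, so Lemma~\ref{lemma:lifting} applies and the $k=1$ equivalence transfers verbatim. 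Combining the pieces, for $k\ge 2$ the inequality is facet-defining iff~\ref{cor:subitem:walks} holds and the degenerate single-source case is avoided, i.e.\ iff both~\ref{cor:subitem:walks} and~\ref{cor:subitem:zero} hold. The main obstacle throughout is the tree property of $D[v]$ together with the triangularity that delivers the affine-independence count.
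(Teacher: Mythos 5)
Your proposal is correct, and its skeleton coincides with the paper's: both parts are obtained by transferring the corresponding $k=1$ statement through Lemma~\ref{lemma:lifting}, with condition~\ref{cor:subitem:zero} entering exactly as the device that excludes the degenerate case in which the indegree inequality collapses to $x_{r,c}\le 1$ and is therefore dominated by~\eqref{ineq:cover} when $k\ge 2$. For part~\ref{cor:item:connectivity} your argument is literally the paper's. The one genuine difference is in part~\ref{cor:item:indegree}: the paper treats the $k=1$ equivalence ``facet $\Leftrightarrow$~\ref{cor:subitem:walks}'' as a black box, citing Wang et al.~\cite{wang2017imposing}, whereas you re-derive it. Your derivation is sound: the tight sets of the indegree inequality on $\poly(G,1)$ are precisely the in-closed vertex sets inducing trees; a directed cycle, or a pair of distinct directed $u,v$-paths, forces every tight set through the relevant vertex to contain an undirected cycle (the union of two distinct $u,v$-paths is never a tree), giving necessity; and under~\ref{cor:subitem:walks} the in-closures $D[v]$ are tight --- your local-source argument correctly rules out undirected cycles in $G[D[v]]$ --- and, listed in topological order, form a triangular system of $n$ affinely independent tight points. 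Your observation that, under~\ref{cor:subitem:walks}, a unique indegree-zero vertex forces $d(v)=1$ for every other $v$ is also correct and is exactly what identifies~\ref{cor:subitem:zero} with ``the inequality is not of the excluded form $x_{v,c}\le 1$'', which is the hypothesis Lemma~\ref{lemma:lifting} needs. So the two proofs differ only in that yours is self-contained where the paper delegates to the literature; this costs a substantial extra argument and buys independence from the $k=1$ characterization of~\cite{wang2017imposing}.
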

\begin{proof}
    Wang et al.~\cite{wang2017imposing} proved that~\eqref{ineq:cut}, for non-adjacent vertices~$u,v \in V(G)$, and $(u,v)$-separator $Z$,  defines a facet  of  $\poly(G,1)$  if and only if~$Z$ is  minimal. As $G$ is connected, this inequality is different from $x_v\leq1$ for all $v \in V(G)$. Hence~\ref{cor:item:connectivity} holds using   Lemma~\ref{lemma:lifting}.
    
    To prove~\ref{cor:item:indegree}, suppose first that~\eqref{ineq:indegree} induces a facet~$F$ of $\poly(G,k)$, and let $D$ denote the orientation of $G$ that defines such inequality.
    By Lemma~\ref{lemma:lifting}, the corresponding facet~$F'$ of $\poly(G,1)$ is different from $\{x \in \poly(G,1) : x_v=1\}$ for all $v \in V(G)$.
    Wang et al.~\cite{wang2017imposing} proved that~$F'$ is a facet of  $\poly(G,1)$ if and only if~(a) holds.
    Due to (a) and $F'\neq \{x \in \poly(G,1) : x_v=1\}$ for all $v \in V(G)$, $D$ contains at least two vertices of indegree zero.

    To show the converse, consider an orientation $D$ of $G$ satisfying (a) and~(b). 
    Let~$F'$ be the face of $\poly(G,1)$ induced by the inequality~\eqref{ineq:indegree} which is associated with $D$.
    Since $D$ has at least two vertices with indegree zero, the inequality inducing $F'$ contains at least two coefficients equal to one, and so it is different from $x_v\leq1$ for all $v \in V(G)$.
    Since~(a) holds, $F'$ is a facet of~$\poly(G,1)$ by Wang et al.
    Therefore, the claimed result follows from Lemma~\ref{lemma:lifting}.
 \end{proof}

The facial structure of the connected $k$-subpartition polytope on trees was studied before in the context of the \textsc{Convex Recoloring} problem to model applications in biology involving phylogenetic trees (see Camp\^elo et al.~\cite{CAMPELO2016}, and  Chopra et al.~\cite{chopra2017extended}). 
In what follows, we use Lemma~\ref{lemma:lifting} to characterize the facets of~$\poly(G,k)$ where $G$ is a tree.

Korte et al.~\cite{korte2012greedoids}
(Theorem 3.6, p.~168) showed an explicit description of $\poly(G,1)$  in terms of the indegree inequalities when $G$ is a tree.

\begin{theorem}[Korte et al.~\cite{korte2012greedoids} (see~\cite{wang2017imposing})]\label{thm:tree-polytope}
    Let $G$ be a tree.
    It holds that 
    \[\poly(G,1) = \{x \in \R^n_\ge : x \text{ satisfies all indegree inequalities }\eqref{ineq:indegree}\}.\]
Moreover, each inequality~\eqref{ineq:indegree} induces a facet of $\poly(G,1)$.
\end{theorem}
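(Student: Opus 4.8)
My plan is to establish the two assertions in turn: first the outer description of $\poly(G,1)$, then the facetness of every indegree inequality. The inclusion ``$\subseteq$'' is exactly the validity already recorded before the statement, so the real work is the reverse inclusion together with the facet count. Throughout I would use that $G$ is a tree, hence that every induced subgraph is a forest.

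The first step is to collapse the exponential family~\eqref{ineq:indegree} into a single concave constraint. Fix $x \in \R^n_\ge$. For an orientation $\vec E$ the inequality rewrites as $\sum_{v} x_v - \sum_{e=\{u,v\}} x_{h(e)} \le 1$, where $h(e)$ is the head of $e$. Since $x \ge 0$, the most binding orientation orients each edge $\{u,v\}$ toward the endpoint of smaller value, so that $x_{h(e)} = \min(x_u, x_v)$; such a per-edge choice is realizable because orientations are unconstrained across edges. Hence $x$ satisfies all of~\eqref{ineq:indegree} if and only if $\sum_{v \in V(G)} x_v - \sum_{\{u,v\} \in E(G)} \min(x_u, x_v) \le 1$, and it remains to show that every nonnegative $x$ meeting this one inequality lies in $\poly(G,1)$.

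For the reverse inclusion I would use a threshold (superlevel-set) decomposition. For $t > 0$ set $S_t = \{v \in V(G) : x_v \ge t\}$, so that $x = \int_0^\infty \chi(S_t)\, dt$ coordinatewise. Because $G$ is a tree, each $G[S_t]$ is a forest, so its number of components equals $|S_t| - |E(G[S_t])|$; integrating over $t$ and using the layer-cake identities $\int_0^\infty |S_t|\,dt = \sum_v x_v$ and $\int_0^\infty |E(G[S_t])|\,dt = \sum_{\{u,v\}} \min(x_u,x_v)$ shows that the number of components, integrated over all thresholds, equals the left-hand side of the concave inequality and is therefore at most $1$. Writing each $\chi(S_t)$ as the sum of the incidence vectors of the subtree components of $G[S_t]$ then expresses $x$ as a nonnegative combination of incidence vectors of connected subgraphs whose total weight is at most $1$; since the empty set absorbs the remaining weight and $\poly(G,1)$ is a polytope (so Carathéodory reduces the integral to a finite convex combination), $x \in \poly(G,1)$. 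Combined with validity this proves the description, and I expect this superlevel-set argument to be the cleanest part.

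It remains to show that every inequality~\eqref{ineq:indegree} is facet-defining, which I expect to be the main obstacle. I would first characterize the tight subtrees: $\chi(V')$ satisfies~\eqref{ineq:indegree} with equality precisely when $\sum_{v \in V'} d(v) = |V'|-1$, i.e.\ when no arc of $\vec E$ enters $V'$ from outside, so that every boundary edge of $V'$ points outward. The goal is then to exhibit $n$ affinely independent such incidence vectors. A naive attempt---building a saturated chain $A_1 \subset \cdots \subset A_n = V(G)$ of tight sets by repeatedly deleting a leaf whose incident arc points into it---fails already for the orientation $a \to b \leftarrow c$ of a path, where no leaf can be removed while preserving tightness (although three affinely independent tight sets still exist). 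I would therefore argue by induction on $|V(G)|$, deleting a leaf $\ell$ of $G$ and lifting $n-1$ affinely independent tight subtrees of the tree $G - \ell$ (for the restricted orientation, a facet by the inductive hypothesis) to $n$ affinely independent tight subtrees of $G$. The delicate point is to treat the two orientations of the edge at $\ell$ and, in the source case, to adjoin a new tight set containing $\ell$; checking that the resulting incidence matrix has full affine rank $n$ is where the combinatorial bookkeeping concentrates.
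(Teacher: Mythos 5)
The paper itself offers no proof of this statement: it is quoted verbatim from Korte et al.\ (Theorem~3.6 of their book) via Wang et al., so there is no in-paper argument to compare yours against, and your proposal has to stand on its own. Its first half does. The reduction of the exponential family \eqref{ineq:indegree} to the single inequality $\sum_{v} x_v - \sum_{\{u,v\}\in E(G)} \min(x_u,x_v) \le 1$ is exactly right (and is the same observation that underlies the linear-time separation the paper attributes to Wang et al.), and the superlevel-set decomposition is a correct and clean proof of the reverse inclusion: since $G[S_t]$ is a forest, its number of components is $|S_t|-|E(G[S_t])|$, the layer-cake identities convert the integrated component count into the left-hand side of that inequality, and splitting each $\chi(S_t)$ into components writes $x$ as a convex combination of incidence vectors of subtrees plus the empty set. (You do not even need Carath\'eodory here: $t\mapsto S_t$ is a step function with at most $n$ distinct nonempty values, so the integral is already a finite sum.)

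The facetness half, however, is only a strategy with the decisive verification explicitly deferred, and the sketch as written slightly misdescribes what has to happen. With $\ell$ a leaf, $p$ its neighbour, and a set tight iff it induces a subtree with no arc entering it from outside (your characterization is correct), the two cases behave as follows. If the edge is oriented $\ell\to p$ (your ``source case''), the $n-1$ tight sets $A_i$ of $G-\ell$ do \emph{not} all remain tight in $G$: every $A_i$ containing $p$ now has the arc $\ell\to p$ entering it and must be replaced by $A_i\cup\{\ell\}$; the adjoined new set is $\{\ell\}$. The affine-independence check is then easy rather than delicate: the $\ell$-coordinate of each lifted vector equals its $p$-coordinate, so in any affine dependence the $\ell$-coordinate equation is the $p$-coordinate equation plus the coefficient of $e(\ell,1)$, forcing that coefficient to vanish and reducing to a dependence among the original $A_i$. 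If instead the edge is oriented $p\to\ell$, the $A_i$ all stay tight, but you still must adjoin a tight set containing $\ell$, namely $B\cup\{\ell\}$ for some tight $B\ni p$ of $G-\ell$; the existence of such a $B$ needs an argument you do not give (e.g.\ grow $B$ from $\{p\}$ by repeatedly absorbing the tail of any arc entering the current set, which terminates in a tight subtree containing $p$), and then affine independence is immediate because the new vector is the only one with $\ell$-coordinate $1$. So the induction does close, but as submitted the second assertion of the theorem is not proved.
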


\begin{theorem}
    Let $G$ be a tree, let $k\ge2$ be an integer, and let $(\pi,\pi_0) \in \R^{nk} \times \R$ be a  nontrivial  single-class inequality of $\poly(G,k)$.
    Then, $(\pi,\pi_0)$ induces a facet of $\poly(G,k)$ if and only if $(\pi,\pi_0)$ is equal to~\eqref{ineq:indegree} defined by an orientation of $G$ with at least two vertices of indegree zero.
\end{theorem}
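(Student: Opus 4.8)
The plan is to use Lemma~\ref{lemma:lifting} to transfer the question entirely into the single-class polytope $\poly(G,1)$, where the tree structure is completely understood via Theorem~\ref{thm:tree-polytope}. Since $(\pi,\pi_0)$ is a single-class inequality, there is some $c \in [k]$ carrying all non-zero coefficients, so $(\pi,\pi_0) = (\pi^c,\pi_0)$ for a suitable $(\rho,\pi_0) \in \R^n \times \R$ acting on class $c$. First I would observe that the inequality $(\rho,\pi_0)$ obtained by restricting to coordinates indexed by $c$ is a valid inequality of $\poly(G,1)$: this follows because any connected subgraph of $G$, placed entirely in class $c$, yields a connected $k$-subpartition whose incidence vector satisfies $(\pi^c,\pi_0)$. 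Thus validity for $\poly(G,k)$ forces validity of $(\rho,\pi_0)$ for $\poly(G,1)$.

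For the forward direction, suppose $(\pi,\pi_0)$ is facet-defining for $\poly(G,k)$. I would first rule out the degenerate case $(\rho,\pi_0) = (e_v, 1)$, i.e.\ the inequality $x_{v,c} \le 1$: by the argument already used in Lemma~\ref{lemma:lifting}, such an inequality is dominated by~\eqref{ineq:cover} when $k \ge 2$ and hence cannot be facet-defining. So $(\rho,\pi_0)$ is a nontrivial valid inequality of $\poly(G,1)$ different from $x_v \le 1$, and Lemma~\ref{lemma:lifting} then tells us $(\rho,\pi_0)$ is facet-defining for $\poly(G,1)$. Now I invoke Theorem~\ref{thm:tree-polytope}: on a tree, the only facet-defining inequalities of $\poly(G,1)$ other than the non-negativity constraints are the indegree inequalities~\eqref{ineq:indegree}. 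Because we have excluded $x_v \le 1$ and the non-negativity constraints (these are trivial), $(\rho,\pi_0)$ must be an indegree inequality. It remains to recover condition (b) of Corollary~\ref{cor:simple-facets}, the existence of at least two vertices of indegree zero: this is exactly the condition distinguishing an indegree inequality from $x_v \le 1$, since an orientation with a unique indegree-zero vertex $r$ and all other vertices of indegree one produces the inequality $x_{r,c} \le 1$, which we have excluded.

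For the converse, suppose $(\pi,\pi_0)$ equals~\eqref{ineq:indegree} for an orientation with at least two indegree-zero vertices. By Theorem~\ref{thm:tree-polytope}, every indegree inequality is facet-defining for $\poly(G,1)$, and the two-zero-indegree hypothesis guarantees (as in the proof of Corollary~\ref{cor:simple-facets}\ref{cor:item:indegree}) that this inequality is different from $x_v \le 1$ for every $v$. Lemma~\ref{lemma:lifting} then lifts it to a facet-defining single-class inequality of $\poly(G,k)$, completing the proof.

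I expect the only genuinely delicate point to be the bookkeeping around the exclusion of $x_{v,c} \le 1$ and its interplay with the two-indegree-zero condition: one must argue carefully that on a tree an indegree inequality coincides with some $x_v \le 1$ precisely when the orientation has a single source (all coefficients $1-d(v)$ vanish except at the unique root), so that the hypothesis in (b) is both necessary and sufficient to separate the interesting indegree inequalities from the dominated coordinate bound. Everything else is a direct translation through Lemma~\ref{lemma:lifting} and Theorem~\ref{thm:tree-polytope}, so the proof should be short once this equivalence is pinned down.
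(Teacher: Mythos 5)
Your proposal is correct and follows essentially the same route as the paper: restrict to the carrying class, exclude $x_{v,c}\le 1$ via domination by~\eqref{ineq:cover}, transfer facetness through Lemma~\ref{lemma:lifting}, identify the resulting facet of $\poly(G,1)$ as an indegree inequality via Theorem~\ref{thm:tree-polytope}, and use the acyclicity of a tree to equate the two-sources condition with being different from a coordinate bound. The "delicate point" you flag (a single-source orientation of a tree yields exactly $x_r\le 1$) is the same observation the paper makes, and your accounting of it is accurate.
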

\begin{proof}
    Let $F$ be the facet of $\poly(G,k)$ induced by $(\pi,\pi_0)$.
    Applying Lemma~\ref{lemma:lifting} on~$F$ yields a facet $F'$ of $\poly(G,1)$ that is different from $\{x \in \R^n : x_v=1\}$ for all $v \in V(G)$.
    By Theorem~\ref{thm:tree-polytope}, $F'$ is induced by an indegree inequality~\eqref{ineq:indegree}, and so $(\pi,\pi_0)$ is equal to \eqref{ineq:indegree} for some $c \in [k]$. 
    Additionally, the orientation of $G$ defining such inequality contains at least two vertices of degree zero since $G$ has no cycle, and $F' \neq \{x \in \R^n : x_v=1\}$ for all $v \in V(G)$.
    
    Consider  an orientation of $G$ having at least two vertices of indegree zero. 
    This orientation defines an inequality~\eqref{ineq:indegree} of $\poly(G,1)$ containing at least two  coefficients equal to one, and  so it is different  from $x_v \leq 1$ for all $v \in V(G)$.
    Moreover, such inequality induces a facet of $\poly(G,1)$ by Theorem~\ref{thm:tree-polytope}.
    The result follows from Lemma~\ref{lemma:lifting}. 
 \end{proof}

We remark that all (nontrivial) facets of $\poly(G,k)$ shown by Chopra et al.~\cite{chopra2017extended}, and by Camp\^elo et al.~\cite{CAMPELO2016} (when $G$ is a tree) are induced by single-class inequalities.
As a consequence of the previous theorem, we conclude that all such facets are induced by indegree inequalities~\eqref{ineq:indegree}.

For general graphs $G$, Camp\^elo et al.~\cite{CAMPELO2016} devised single-class inequalities for~$\poly(G,k)$ generalizing~\eqref{ineq:cut}, and showed sufficient conditions for  facetness.
These inequalities are a particular case of the generalized connectivity inequalities introduced next.

\subsection{Generalized connectivity inequalities}
\label{sec:general-single}

Before showing a class of inequalities that generalizes both~\eqref{ineq:cut} and~\eqref{ineq:indegree}, we introduce some additional notation and definitions.
Let $\ell \in [n]$.
Given a set $S \subseteq V(G)$ with $|S|=\ell$, and an $\ell$-partition $\mathcal{W} = \{W_1, \ldots, W_{\ell}\}$ of $V(G)$, we say that $\mathcal{W}$ \emph{agrees} with $S$ if each vertex of $S$ belongs to exactly one class in $\mathcal{W}$.
We denote by $E(\mathcal{W})$ the set of all edges of~$G$ with endpoints in different classes of $\mathcal{W}$, that is, $E(\mathcal{W}) = \{\{u,v\} \in E(G) \colon u \in W_i, v\in W_j \text{ for some } i,j \in [\ell] \text{ with } i\neq j \}$.
Let $\vec E(\mathcal{W})$ be the set of arcs obtained in an arbitrary (but fixed) orientation of the edges in $E(\mathcal{W})$, that is, $\vec E(\mathcal{W})= \{(u,v)  \in V(G)\times V(G) : \{u,v\} \in E(\mathcal{W}) \}$ is asymmetric.

We now define a function $\hat d \colon V(G) \to [\ell]$ such that $\hat d(v) = |\{j \in [\ell] : (u,v) \in \vec E(\mathcal{W}) \text{ and } u \in W_j\}|$.
Intuitively, $\hat d(v)$ corresponds to the number of different classes of $\mathcal{W}$ that contain a vertex that is the tail of some arc with head $v$.
Clearly, every vertex~$v$ that is not an endpoint of an edge in $E(\mathcal{W})$ satisfies
 $\hat d(v)=0$.

\begin{proposition}\label{prop:validity:general-cut}
    Let $c \in [k]$, $S \subseteq V(G)$,  $\mathcal{W}$ be a $|S|$-partition of~$V(G)$ that agrees with $S$, and $\vec E(\mathcal{W})$ be an orientation of $E(\mathcal{W})$ (with $\hat d $ defined accordingly).
The generalized connectivity inequality 
    \begin{equation}\label{ineq:cut-general}
\sum_{u \in S}(1-\hat d(u))x_{u,c} - \sum_{v \in V(G)\setminus S} \hat d(v) x_{v,c} \leq 1
    \end{equation}
    is valid for $\poly(G,k)$.
\end{proposition}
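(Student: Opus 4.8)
The plan is to exploit that $\poly(G,k)$ is the convex hull of the incidence vectors $\chi(\subpart)$ of connected $k$-subpartitions $\subpart=\{V_1,\dots,V_k\}$, so it suffices to verify that a linear inequality holds at every such vertex. First I would substitute $x_{v,c}=\chi(\subpart)_{v,c}$, which equals $1$ exactly when $v\in V_c$. Splitting $V_c$ into $S\cap V_c$ and $V_c\setminus S$, the left-hand side of~\eqref{ineq:cut-general} collapses to
\[
\sum_{u\in S\cap V_c}\bigl(1-\hat d(u)\bigr)-\sum_{v\in V_c\setminus S}\hat d(v)
 \;=\; |S\cap V_c|-\sum_{v\in V_c}\hat d(v).
\]
If $V_c=\emptyset$ the value is $0\le 1$ and we are done, so I may assume $V_c\neq\emptyset$. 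The whole proposition therefore reduces to the purely combinatorial claim
\[
\sum_{v\in V_c}\hat d(v)\;\ge\;|S\cap V_c|-1 .
\]

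The crux is to extract this lower bound from the connectivity of $G[V_c]$. Let $J=\{\,j\in[\ell]:W_j\cap V_c\neq\emptyset\,\}$ be the set of classes of $\mathcal W$ that meet $V_c$. Since $\mathcal W$ agrees with $S$, the $|S\cap V_c|$ vertices of $S$ lying in $V_c$ occupy pairwise distinct classes, so $|J|\ge|S\cap V_c|$; it thus suffices to prove $\sum_{v\in V_c}\hat d(v)\ge|J|-1$. I would form the quotient of $G[V_c]$ obtained by contracting each nonempty set $W_j\cap V_c$ to a single node $j\in J$: because $G[V_c]$ is connected, this quotient is connected on $J$ and hence has at least $|J|-1$ edges, each corresponding to an edge of $G[V_c]$ joining two different classes (and therefore lying in $E(\mathcal W)$). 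Orienting these according to $\vec E(\mathcal W)$ yields a directed quotient graph $\vec Q$ on $J$ whose underlying graph is connected, so $\vec Q$ has at least $|J|-1$ arcs.

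The step I expect to be most delicate is relating the arc count of $\vec Q$ back to $\sum_{v\in V_c}\hat d(v)$, because $\hat d(v)$ counts \emph{distinct source classes} sending an arc into $v$, not arcs, and the orientation of $E(\mathcal W)$ is fixed arbitrarily. The key observation is that for each node $j'\in J$ and each arc $j\to j'$ of $\vec Q$, there is a vertex $v\in W_{j'}\cap V_c$ receiving an arc from class $W_j$, so $j$ is one of the classes counted by $\hat d(v)$; summing over the in-neighbours of $j'$ gives $\sum_{v\in W_{j'}\cap V_c}\hat d(v)\ge \delta^-_{\vec Q}(j')$. Summing this over all $j'\in J$ yields $\sum_{v\in V_c}\hat d(v)\ge\sum_{j'\in J}\delta^-_{\vec Q}(j')=|\text{arcs of }\vec Q|\ge|J|-1$, which closes the argument. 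Care must be taken that distinct quotient edges correspond to distinct class pairs (so no double counting occurs) and that arcs from tails \emph{outside} $V_c$ only increase each $\hat d(v)$ and hence do no harm to the lower bound. Combining the three parts gives $|S\cap V_c|-\sum_{v\in V_c}\hat d(v)\le 1$, proving validity.
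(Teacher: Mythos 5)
Your proof is correct and follows essentially the same route as the paper: reduce to the combinatorial inequality $|S\cap V_c|-\sum_{v\in V_c}\hat d(v)\le 1$ for the connected class $V_c$, bound $|S\cap V_c|$ by the number of classes of $\mathcal{W}$ meeting $V_c$, and use connectivity to get $\sum_{v\in V_c}\hat d(v)\ge p-1$. The only difference is that you spell out, via the quotient/contraction argument, the step the paper asserts directly from the definition of $\hat d$.
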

\begin{proof}
    Let $H$ be any connected subgraph of $G$. 
    Suppose that there are $p \leq \ell$ classes of $\mathcal{W}$ that are intersected by $H$, that is, $p = |\{W \in \mathcal{W} \colon V(H)\cap W \neq \emptyset\}|$.
    Clearly, it holds that $|S\cap V(H)| \leq p$.
Since $H$ is connected, it follows from the definition of $\hat d$ that
    \(\sum_{v \in V(H)} \hat d(v) \geq p-1.\)
    Hence, we have 
    \begin{equation}\label{eq:validity}
        |S\cap V(H)| - \sum_{v \in V(H)} \hat d(v) \le  1.
    \end{equation}

    Consider now a vector $\bar x \in \poly(G,k) \cap \{0,1\}^{nk}$.
    Note that the vertex set $\{v \in V(G) :  \bar x_{v,c}=1\}$ induces a connected subgraph, say~$H_c$, of $G$.
    Since $\sum_{u \in S}\bar x_{u,c} = |S\cap V(H_c)|$ and $\sum_{v \in V(H_c)} \hat d(v) = \sum_{v \in V(G)} \hat d(v) \bar x_{v,c}$,  inequality~\eqref{eq:validity} implies that $\bar x$ satisfies~\eqref{ineq:cut-general} for $c$.
    Therefore, inequality~\eqref{ineq:cut-general} is valid for $\poly(G,k)$.
 \end{proof}

We next show an example of a fractional solution in $\poly'(G,k)$ and an inequality~\eqref{ineq:cut-general} that cuts it off.
Consider a path $P$ on $v_1\ldots v_5$, fix $c \in [k]$, and define the vector $\bar x \in \R^{5k}$  such that its non-zero entries are $\bar x_{v_1,c}=\bar x_{v_3,c}=\bar x_{v_5,c}=1/2$.
This example is depicted in Figure~\ref{fig:example:connectivity}.
It is clear that $\bar x \in \poly'(P,k)$.
However, $\bar x$ violates inequality~\eqref{ineq:cut-general} for $S=\{v_1, v_3, v_5\}$, $\mathcal{W} = \{\{v_1,v_2\}, \{v_3\}, \{v_4,v_5\}\}$, and orientation $\{(v_3,v_2),(v_3,v_4)\}$, that is, $x_{v_1,c}+x_{v_3,c}+x_{v_5,c}-x_{v_2,c}-x_{v_4,c}\leq 1$. 

\begin{figure}[tbh!]
\centering
\begin{subfigure}{.47\textwidth}
    \centering
    \begin{tikzpicture}[
auto,
            node distance = 1.5cm, semithick, scale=0.8
]

        \tikzstyle{every state}=[
            draw = black,
            semithick,
            fill = white,
            minimum size = 4mm,
            scale=0.9,
            inner sep = 1pt
        ]

        \node[state] (v1) {$v_1$};
        \node[state] (v2) [right of=v1] {$v_2$};
\node[state] (v3) [right of=v2] {$v_3$};
\node[state] (v4) [right of=v3] {$v_4$};
        \node[state] (v5) [right of=v4] {$v_5$};

        \node [above=0.3cm, scale=0.8] at (v1) {$1/2$};
        \node [above=0.4cm, scale=0.8] at (v2) {$0$};
        \node [above=0.3cm, scale=0.8] at (v3) {$1/2$};
        \node [above=0.4cm, scale=0.8] at (v4) {$0$};
        \node [above=0.3cm, scale=0.8] at (v5) {$1/2$};
        
        \path (v1) edge node{} (v2);
        \path (v2) edge node{} (v3);
        \path (v3) edge node{} (v4);
        \path (v4) edge node{} (v5);
    \end{tikzpicture}\caption{} 
\end{subfigure}\hfill
\begin{subfigure}{.47\textwidth}
    \centering
    \begin{tikzpicture}[
auto,
            node distance = 1.5cm, semithick, scale=0.8
]
\tikzstyle{every state}=[
            draw = black,
            semithick,
            fill = white,
            minimum size = 4mm,
            scale=0.9,
            inner sep = 1pt
        ]

        \node[state, very thick] (v1) {$v_1$};
        \node[state] (v2) [right of=v1] {$v_2$};
        \node[state, very thick] (v3) [right of=v2] {$v_3$};
\node[state] (v4) [right of=v3] {$v_4$};
        \node[state, very thick] (v5) [right of=v4] {$v_5$};

        \node[draw,dotted,fit=(v1) (v2)] {};
        \node[draw,dotted,fit=(v3)] {};
        \node[draw,dotted,fit=(v4) (v5)] {};
        
        \path[-] (v1) edge node{} (v2);
        \path[<-] (v2) edge node{} (v3);
        \path[->] (v3) edge node{} (v4);
        \path[-] (v4) edge node{} (v5);
    \end{tikzpicture}
    \caption{}
\end{subfigure}\hfill
\caption{Example of a fractional solution that violates a general connectivity inequality. (a) shows a graph~$G$ with vertex weights given by a fractional solution in $\poly'(G,k)$ for a fixed $c \in [k]$.
    (b) depicts an orientation $\vec E$ of $E(\mathcal{W})$, where the dotted rectangles represent the partition $\mathcal{W}=\{\{v_1,v_2\}, \{v_3\}, \{v_4,v_5\}\}$ and thicker circles the vertices in $S=\{v_1,v_3,v_5\}$.}
    \label{fig:example:connectivity}
\end{figure}

As we shall see,  connectivity inequalities and  indegree inequalities are special cases of the generalized connectivity inequalities.  
\begin{proposition}\label{prop:connectivity}
Connectivity inequalities~\eqref{ineq:cut} for minimal separators are a particular case of~\eqref{ineq:cut-general} with~$S=\{u,v\}$.
\end{proposition}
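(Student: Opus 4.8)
The plan is to exhibit an explicit choice of partition $\mathcal{W}$ and orientation $\vec E(\mathcal{W})$ that makes inequality~\eqref{ineq:cut-general} reduce exactly to the connectivity inequality~\eqref{ineq:cut}. Fix non-adjacent vertices $u,v$ and a minimal separator $Z \in \Gamma(u,v)$, and set $S = \{u,v\}$, so that $|S| = 2$ and $\mathcal{W}$ must be a $2$-partition $\{W_1, W_2\}$ of $V(G)$ that agrees with $S$, i.e.\ $u$ and $v$ lie in different classes, say $u \in W_1$ and $v \in W_2$.

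The key idea is to use the separator $Z$ to split $V(G)$ into the two classes. Since $Z$ is a $u,v$-separator, removing $Z$ disconnects $u$ from $v$ in $G-Z$; I would let $C_u$ be the union of the components of $G-Z$ that lie on $u$'s side and put $W_1 = C_u$ (containing $u$), and $W_2 = V(G) \setminus W_1$ (containing $v$ together with $Z$). The crucial structural fact, which I expect to be the main point requiring the minimality of $Z$, is that every edge of $E(\mathcal{W})$ — every edge crossing between $W_1$ and $W_2$ — must have at least one endpoint in $Z$; moreover, by minimality of $Z$, every vertex $z \in Z$ has a neighbor in $W_1$ (otherwise $z$ could be dropped from the separator). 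I would then orient each crossing edge \emph{from} its $W_1$-endpoint \emph{toward} its $Z$-endpoint in $W_2$, so that the heads of arcs all lie in $Z \subseteq W_2$.

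With this orientation, I would compute $\hat d$ termwise. For $u \in S$: since $u \in W_1$ and all arcs point away from $W_1$, no arc has head $u$, so $\hat d(u) = 0$ and the coefficient $(1 - \hat d(u)) = 1$. For $v \in S$: $v \in W_2$, and (choosing the partition so that $v$ itself is not adjacent to $W_1$, which holds because $\{u,v\}\notin E(G)$ and we may route crossing edges through $Z$) I get $\hat d(v) = 0$, giving coefficient $1$. For each $z \in Z$: every arc with head $z$ originates in $W_1$, which is a single class, so $\hat d(z) = 1$, contributing $-\hat d(z) x_{z,c} = -x_{z,c}$. For all remaining vertices outside $S$ there are no arcs into them, so $\hat d = 0$ and they drop out. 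Substituting these coefficients into~\eqref{ineq:cut-general} yields exactly
\[
x_{u,c} + x_{v,c} - \sum_{z \in Z} x_{z,c} \le 1,
\]
which is precisely~\eqref{ineq:cut}.

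The main obstacle I anticipate is verifying the $\hat d$ computation cleanly, specifically confirming that no vertex outside $Z$ receives an arc and that each $z \in Z$ receives arcs from only the \emph{single} class $W_1$ (so $\hat d(z)=1$ rather than larger). This is where the two-class structure of $\mathcal{W}$ does the work: since there are only two classes, any vertex in $W_2$ can have incoming arcs from at most the one other class $W_1$, forcing $\hat d(z) \le 1$, and the minimality of $Z$ guarantees $\hat d(z) = 1$ for every $z$. I would state these verifications explicitly but keep them brief, as they follow directly from the definitions of $E(\mathcal{W})$ and $\hat d$ together with the separator property.
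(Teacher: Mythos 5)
Your proof is correct and follows essentially the same route as the paper: take $W_1$ to be the ($u$-side) component(s) of $G-Z$, $W_2 = V(G)\setminus W_1$, orient all crossing edges into $Z$, and check that $\hat d$ is $1$ on $Z$ and $0$ elsewhere, with minimality of $Z$ used exactly where you use it (to ensure every $z\in Z$ has a neighbour in $W_1$, so $\hat d(z)=1$ rather than $0$). The only nitpick is that the fact that every crossing edge has an endpoint in $Z$ needs no minimality --- it follows from $W_1$ being a union of components of $G-Z$ --- and $\hat d(v)=0$ follows for the same reason, not from any special routing choice.
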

\begin{proof}

Consider $u,v \in V(G)$ non-adjacent vertices, $Z \in \Gamma(u,v)$, and $c \in [k]$. 
Let~$U$ be the vertices of the connected component of $G-Z$ containing $u$.
Observe that the partition $\mathcal{W}=\{U,\bar U\}$ agrees with the set $\{u,v\}$, where $\bar U=V(G)\setminus U$.
Suppose  now that all arcs are oriented from $U$ to $\bar U$, and note that~$\hat d(v)=1$ if $v \in Z$, and $\hat d(v)=0$ otherwise. 
 \end{proof}

\begin{proposition}\label{prop:indegree}
Indegree  inequalities~\eqref{ineq:indegree} are precisely~\eqref{ineq:cut-general}  with $S=V(G)$.
\end{proposition}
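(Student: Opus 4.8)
The plan is to unwind the definition of inequality~\eqref{ineq:cut-general} in the special case $S = V(G)$ and verify that it collapses to exactly the indegree inequality~\eqref{ineq:indegree}, and conversely that every indegree inequality arises in this way. First I would observe that taking $S = V(G)$ forces the partition $\mathcal{W}$ to be the partition of $V(G)$ into singletons: since $|S| = n$, the collection $\mathcal{W}$ is an $n$-partition of the $n$-element set $V(G)$ into non-empty classes, and the only such partition is $\mathcal{W} = \{\{v\} : v \in V(G)\}$. In particular $V(G)\setminus S = \emptyset$, so the second sum in~\eqref{ineq:cut-general} disappears.

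Next I would determine $E(\mathcal{W})$ and $\hat d$ for this partition. Because every class is a singleton, every edge of $G$ joins two distinct classes, so $E(\mathcal{W}) = E(G)$ and an orientation $\vec E(\mathcal{W})$ of $E(\mathcal{W})$ is exactly an orientation $\vec E$ of all of $G$. It then remains to show $\hat d(v) = d(v)$ for every $v \in V(G)$. Each arc $(u,v) \in \vec E(\mathcal{W})$ with head $v$ has its tail $u$ in the singleton class $\{u\}$, and distinct tails lie in distinct classes; hence the number of classes of $\mathcal{W}$ containing a tail of an arc into $v$ equals the number of arcs with head $v$, which is precisely the indegree $d(v)$.

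Finally I would substitute these identifications into~\eqref{ineq:cut-general}. Using $\hat d = d$ and the vanishing of the second sum, the inequality reads $\sum_{v \in V(G)} (1 - d(v))\, x_{v,c} \le 1$, which is exactly~\eqref{ineq:indegree} for the orientation $\vec E$ and class $c$. Running this correspondence in reverse---starting from any orientation $\vec E$ of $G$, setting $S = V(G)$, letting $\mathcal{W}$ be the singleton partition, and taking $\vec E(\mathcal{W}) = \vec E$---shows that every indegree inequality is of the form~\eqref{ineq:cut-general}, giving the claimed equality of the two families. I expect no genuine obstacle in this argument: the only point requiring a moment's care is the observation that $S = V(G)$ forces singleton classes (via the cardinality constraint $|\mathcal{W}| = |S| = n$), which is what makes $\hat d$ coincide with the ordinary indegree $d$.
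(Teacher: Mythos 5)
Your proposal is correct and follows the same route as the paper, which simply observes that~\eqref{ineq:cut-general} with $S=V(G)$ and $\mathcal{W}=\{\{v\}\}_{v\in V(G)}$ is exactly~\eqref{ineq:indegree}; you merely spell out the details (that $|\mathcal{W}|=|S|=n$ forces the singleton partition, that $E(\mathcal{W})=E(G)$, and that $\hat d=d$), which the paper leaves implicit. No issues.
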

\begin{proof}
The class of inequalities~\eqref{ineq:cut-general} with $S=V(G)$ and $\mathcal{W} = \{\{v\}\}_{v \in V(G)}$ are precisely inequalities~\eqref{ineq:indegree}. 
 \end{proof}

In Figure~\ref{fig:example:indegree}, we show a graph and a fractional vector that  satisfies all connectivity and all indegree inequalities (i.e.,~\eqref{ineq:cut} and~\eqref{ineq:indegree}), but violates a certain inequality~\eqref{ineq:cut-general} with a partition~$\mathcal{W}$ of size $3$.
Figure~\ref{subfig:fractional} depicts a graph $G$ with weights given by a vector $\bar x \in \poly'(G,k)$ (for a fixed $c \in [k])$, that is, $\bar x_{v_1,c}=\bar x_{v_4,c}=\bar x_{v_7,c}=1/2$, $\bar x_{v_3,c}=\bar x_{v_6,c}=1/8$, and  $\bar x_{v_2,c}=\bar x_{v_5,c}=1/16$.
One may verify that, for any orientation of $E(G)$, $\bar x$ satisfies inequality~\eqref{ineq:indegree}. 
An orientation that maximizes the left-hand side of~\eqref{ineq:indegree} for $\bar x$ (and $c \in [k]$) is shown in Figure~\ref{subfig:orientation}.
The dotted rectangles in this figure represent the three classes of the partition $\mathcal{W} = \{\{v_1,v_2,v_3\}, \{v_4\}, \{v_5,v_6,v_7\}\}$.
Considering the orientation restricted to $E(\mathcal{W})$, it holds that $\hat d(v_2)=\hat d(v_3)=\hat d(v_5)=\hat d(v_6)=1$ whilst each other vertex $v$ of $G$ has $\hat d(v) =0$.
Therefore, inequality~\eqref{ineq:cut-general} corresponding to $S=\{v_1,v_4,v_7\}$, $\mathcal{W},$ and $ \hat d$ is 
$x_{v_1,c}+x_{v_4,c}+x_{v_7,c} - x_{v_2,c} - x_{v_3,c} - x_{v_5,c} - x_{v_6,c} \le 1$, which is violated by $\bar x$.
As we shall demonstrate in Theorem~\ref{thm:gen-connectiviy:facetness}, this inequality is facet-defining.

\begin{figure}[t!]
\centering
\begin{subfigure}{.47\linewidth}
    \centering
    \begin{tikzpicture}[
auto,
            node distance = 1.5cm, semithick, scale=0.8, 
]

        \tikzstyle{every state}=[
            draw = black,
            semithick,
            fill = white,
            minimum size = 4mm,
            scale=0.9,
            inner sep=1pt
        ]

        \node[state] (v1) {$v_1$};
        \node[state] (v2) [above right of=v1] {$v_2$};
        \node[state] (v3) [below right of=v1] {$v_3$};
\node[state] (v4) [below right of=v2] {$v_4$};
        \node[state] (v5) [above right of=v4] {$v_5$};
        \node[state] (v6) [below right of=v4] {$v_6$};
        \node[state] (v7) [below right of=v5] {$v_7$};

        \node [below=0.3cm, scale=0.8] at (v1) {$1/2$};
        \node [below=0.3cm, scale=0.8] at (v4) {$1/2$};
        \node [below=0.3cm, scale=0.8] at (v7) {$1/2$};

        \node [right=0.3cm, scale=0.8] at (v2) {$1/16$};
        \node [right=0.3cm, scale=0.8] at (v5) {$1/16$};
        \node [right=0.3cm, scale=0.8] at (v3) {$1/8$};
        \node [right=0.3cm, scale=0.8] at (v6) {$1/8$};

        \path (v1) edge node{} (v2);
        \path (v1) edge node{} (v3);
        \path (v3) edge node{} (v2);
        \path (v4) edge node{} (v2);
        \path (v4) edge node{} (v3);
        
        \path (v7) edge node{} (v5);
        \path (v7) edge node{} (v6);
        \path (v6) edge node{} (v5);
        \path (v4) edge node{} (v5);
        \path (v4) edge node{} (v6);

\end{tikzpicture}\caption{} 
    \label{subfig:fractional}
\end{subfigure}\hfill
\begin{subfigure}{.47\linewidth}
    \centering
    \begin{tikzpicture}[
auto,
            shorten > = 1pt, node distance = 1.5cm, semithick, scale=0.8, 
]
\tikzstyle{every state}=[
            draw = black,
            semithick,
            fill = white,
            minimum size = 4mm,
            scale=0.9,
            inner sep =1pt
        ]

        \node[state,very thick] (v1) {$v_1$};
        \node[state] (v2) [above right of=v1] {$v_2$};
        \node[state] (v3) [below right of=v1] {$v_3$};
\node[state, very thick] (v4) [below right of=v2] {$v_4$};
        \node[state] (v5) [above right of=v4] {$v_5$};
        \node[state] (v6) [below right of=v4] {$v_6$};
        \node[state, very thick] (v7) [below right of=v5] {$v_7$};

        \node[draw,dotted,fit=(v1) (v2) (v3)] {};
        \node[draw,dotted,fit=(v5) (v6) (v7)] {};
        \node[draw,dotted,fit=(v4)] {};

        \path[->] (v1) edge node{} (v2);
        \path[->] (v1) edge node{} (v3);
        \path[->] (v3) edge node{} (v2);
        \path[->] (v4) edge node{} (v2);
        \path[->] (v4) edge node{} (v3);
        
        \path[->] (v7) edge node{} (v5);
        \path[->] (v7) edge node{} (v6);
        \path[->] (v6) edge node{} (v5);
        \path[->] (v4) edge node{} (v5);
        \path[->] (v4) edge node{} (v6);

\end{tikzpicture}
    \caption{}
    \label{subfig:orientation}
\end{subfigure}\hfill
\caption{Example of a fractional solution in $\poly'(G,k)$ that satisfies all indegree inequalities, but violates a general connectivity inequality. (a) shows a graph~$G$ with vertex weights given by a fractional solution in $\poly'(G,k) \cap \{x \in \R^{n}_\geq : x \text{ satisfies } \eqref{ineq:indegree} \}$ for a fixed $c \in [k]$.
    (b) depicts an orientation $\vec E$ of $E(G)$ that maximizes the left-hand side of the indegree inequalities for that fractional solution.
    The dotted squares represent the partition $\mathcal{W}=\{\{v_1,v_2,v_3\}, \{v_4\}, \{v_5,v_6,v_7\}\}$, and thicker circles represent the vertices in $S= \{v_1,v_4,v_7\}$.}
    \label{fig:example:indegree}
\end{figure}

\begin{comment}
{\bf Separation:} Suppose that $S$ and $\mathcal{W}$ are given. Then does the separation reduce to 
\begin{align*}
\min & \sum_{v\in V(G)} x^*_{vi} \sum_{j\in [\ell]}z_{vj}\\
& y_{uv}+y_{vu}=1 & \{u,v\}\in E(\mathcal{W})\\
& z_{vj} \geq y_{uv} & j\in [\ell], u\in W_j, v\in V(G)\\
&y,z \mbox{ binary}
\end{align*}
\end{comment}

In the remainder of this section, we prove that inequalities~\eqref{ineq:cut-general} induce facets  under some hypothesis. 
First let us introduce some definitions to be used in this proof.

Let $D=(V(G), \vec E(\mathcal{W}))$.
For each $v \in V(G)$, a \emph{hanger of $v$}, denoted by $H_v$, is an inclusion-wise maximal subgraph of $D$ satisfying the following properties for every $u \in V(H_v)$:
\begin{enumerate}[start=1,label={(H\arabic*)}]
    \item there is a directed path from $u$ to $v$ in $H_v$; and
    \item the indegree of $u$ in $H_v$ is equal to $\hat d (u)$.
\end{enumerate}
Figure~\ref{fig:example:hangers} shows an example illustrating this concept.

\begin{figure}[t!]
\centering
\begin{subfigure}{.47\linewidth}
    \centering
    \begin{tikzpicture}[
auto,
            node distance = 1.5cm, semithick, scale=0.8, 
]

        \tikzstyle{every state}=[
            draw = black,
            semithick,
            fill = white,
            minimum size = 4mm,
            scale=0.9,
            inner sep=1pt
        ]

        \node[state] (v1) {$v_1$};
        \node[state] (v2) [above right of=v1] {$v_2$};
        \node[state] (v3) [below right of=v1] {$v_3$};
\node[state] (v4) [below right of=v2] {$v_4$};
        \node[state] (v5) [above right of=v4] {$v_5$};
        \node[state] (v6) [below right of=v4] {$v_6$};
        \node[state] (v7) [below right of=v5] {$v_7$};

        \node[draw,dotted,fit=(v1) (v2) (v3)] {};
        \node[draw,dotted,fit=(v5) (v6) (v7)] {};
        \node[draw,dotted,fit=(v4)] {};

        \path (v1) edge node{} (v2);
        \path (v1) edge node{} (v3);
        \path (v3) edge node{} (v2);
        \path[<-] (v4) edge node{} (v2);
        \path[<-] (v4) edge node{} (v3);
        
        \path (v7) edge node{} (v5);
        \path (v7) edge node{} (v6);
        \path (v6) edge node{} (v5);
        \path[->] (v4) edge node{} (v5);
        \path[->] (v4) edge node{} (v6);

\end{tikzpicture}\caption{Graph $G$ with an orientation of the edges crossing elements of the partition $\mathcal{W}=\{\{v_1,v_2,v_3\}, \{v_4\}, \{v_5,v_6,v_7\}\}$ that is represented by dotted squares.} 
\end{subfigure}\hfill
\begin{subfigure}{.47\linewidth}
    \centering
    \begin{tikzpicture}[
auto,
            node distance = 1.5cm, semithick, scale=0.8, 
]

        \tikzstyle{every state}=[
            draw = black,
            semithick,
            fill = white,
            minimum size = 4mm,
            scale=0.9,
            inner sep=1pt
        ]

        \node[state] (v1) {$v_1$};
        \node[state] (v2) [above right of=v1] {$v_2$};
        \node[state] (v3) [below right of=v1] {$v_3$};
\node[state] (v4) [below right of=v2] {$v_4$};
        \node[state] (v5) [above right of=v4] {$v_5$};
        \node[state] (v6) [below right of=v4] {$v_6$};
        \node[state] (v7) [below right of=v5] {$v_7$};

        \node[draw,dotted,fit=(v1) (v2) (v3)] {};
        \node[draw,dotted,fit=(v5) (v6) (v7)] {};
        \node[draw,dotted,fit=(v4)] {};

\path[<-] (v4) edge node{} (v2);
        \path[<-] (v4) edge node{} (v3);
        
\path[->] (v4) edge node{} (v5);
        \path[->] (v4) edge node{} (v6);

\end{tikzpicture}\caption{A directed graph $D=(V(G), \vec E(\mathcal{W}))$ containing six nontrivial hangers: $v_2v_4$, $v_3v_4$, $v_2v_4v_5$, $v_3v_4v_5$, $v_2v_4v_6$, and $v_3v_4v_6$.\\\hfill} 
\end{subfigure}\hfill
\caption{Example illustrating the hangers in a directed graph.}
\label{fig:example:hangers}
\end{figure}

    Let $\mathcal{H}=\{H_v\}_{v \in V(G)}$ be a collection of hangers in $D$, and define $\mathcal{W}_v = \{W \in \mathcal{W} : V(H_v)\cap W \neq \emptyset\}$ for every $v \in V(G)$.
    The collection $\mathcal{H}$ is said to be \emph{conforming} if
    \begin{enumerate}[start=1,label={(C\arabic*)}]
        \item for each $v \in V(G)$, $H_v$ is an (in)arborescence rooted at $v$ (i.e., an orientation of  rooted tree where every edge is oriented toward the root), and $|V(H_v)\cap W|\le 1$ for all $W \in \mathcal{W}$; 
\label{conforming:arborescence}
        \item for every $u,v \in V(G)$ such that $u \in V(H_v)$, it holds that $H_u \subseteq H_v$;
        \label{conforming:subgraph}
\item for each $W \in \mathcal{W}$ and $v \in W$, there is a path $P$ in $G[W]$ between~$v$ and the single vertex in $S\cap W$ such that $\mathcal{W}_u\cap \mathcal{W}_{u'} = \{W\}$ for all $u,u' \in V(P)$ with $u\neq u'$. \label{conforming:path}
    \end{enumerate}

In the example depicted in Figure~\ref{fig:example:hangers},  $\mathcal{H}_1=\{v_1, v_2, v_3, v_2v_4, v_2v_4v_5, v_2v_4v_6, v_7\}$ satisfies condition~\ref{conforming:subgraph} while $\mathcal{H}_2=\{v_1, v_2, v_3, v_2v_4, v_2v_4v_5, v_3v_4v_6, v_7\}$ does not since the hanger of $v_4$ is not contained in the hanger of $v_6$. If $S=\{v_1,v_4,v_7\}$ then $\mathcal{H}_1$ satisfies condition~\ref{conforming:path} but if $S=\{v_1,v_4,v_6\}$ then it does not since 
$\mathcal{W}_{v_5} \cap \mathcal{W}_{v_6}=\{\{v_1,v_2,v_3\}, \{v_4\}, \{v_5,v_6,v_7\}\}$.

In what follows, to prove that a face $\hat{F} = \{ x\in \poly(G,k) : \hat{\lambda}^{T}x = \hat{\lambda}_0 \}$ is a facet of
$\poly(G,k)$, we show that, if a face $F= \{ x\in \poly(G,k) :
\lambda^T x = \lambda_0 \}$ contains~$\hat{F}$, then
there exists $a \in \R$ 
such that $\lambda^T = a
\hat{\lambda}^{T}$ and $\lambda_0 = a \hat{\lambda}_0$.

\begin{theorem}\label{thm:gen-connectiviy:facetness}
     Inequality~\eqref{ineq:cut-general} induces a facet of $\poly(G,k)$ if
    \begin{enumerate}[(i)]
        \item there exists a conforming collection of hangers $\mathcal{H}=\{H_v\}_{v \in V(G)}$ in the directed graph $D=(V(G), \vec E(\mathcal{W}))$; and 
        
        \item there exist $u,v \in S$ such that $u\neq v$ and $\hat d(u)=\hat d(v)=0$ when $k\ge 2$. \label{property:zero-indegree2}
    \end{enumerate}
\end{theorem}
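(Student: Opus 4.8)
The plan is to use the containing-face method spelled out just before the statement. Write inequality~\eqref{ineq:cut-general} as $\hat\lambda x\le\hat\lambda_0$, so that $\hat\lambda_{v,c}=1-\hat d(v)$ for $v\in S$, $\hat\lambda_{v,c}=-\hat d(v)$ for $v\notin S$, $\hat\lambda_{v,b}=0$ for $b\ne c$, and $\hat\lambda_0=1$; let $\hat F$ be the face it induces. Assuming a valid inequality $\lambda x\le\lambda_0$ whose face contains $\hat F$, the goal is to exhibit $a\in\R$ with $\lambda=a\hat\lambda$ and $\lambda_0=a$. The workhorse is the tightness criterion read off from the proof of Proposition~\ref{prop:validity:general-cut}: a connected $k$-subpartition with class $c$ equal to a connected subgraph $H$ lies in $\hat F$ exactly when equality holds in~\eqref{eq:validity}, i.e.\ $H$ meets $p$ classes of $\mathcal{W}$, contains the unique $S$-vertex of each met class, and satisfies $\sum_{v\in V(H)}\hat d(v)=p-1$ (a tree-like crossing pattern). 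Since $\hat d$ is fixed, the left-hand side of~\eqref{ineq:cut-general} is a genuine linear form, so adding or deleting a vertex $w$ from class $c$ changes it by precisely $\hat\lambda_{w,c}$; this makes tightness trivial to track under local moves, and every pair $\bar x,\bar y\in\hat F$ gives the homogeneous relation $\lambda(\bar x-\bar y)=0$.

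Part A (off-class coefficients). For $k\ge 2$, condition~\ref{property:zero-indegree2} supplies two distinct $S$-vertices $s_1,s_2$ with $\hat d(s_1)=\hat d(s_2)=0$, so the singletons $\{s_1\},\{s_2\}$ placed in class $c$ lie in $\hat F$ (each has left-hand side $1-\hat d(s_i)=1$). Given any $v$ and any $b\ne c$, I would pick $s_i\ne v$ and compare the tight point with class $c=\{s_i\}$ against the point obtained by adding $e(v,b)$: both are in $\hat F$ since the inequality ignores class $b$, so their difference forces $\lambda_{v,b}=0$. The need for \emph{two} such vertices is exactly to exclude each $v$ (for $v\ne s_1$ use $\{s_1\}$, for $v=s_1$ use $\{s_2\}$), and it is precisely what rules out the degenerate case in which~\eqref{ineq:cut-general} collapses to $x_{r,c}\le 1$, dominated by~\eqref{ineq:cover} when $k\ge 2$; for $k=1$ this constraint is vacuous because $x_{r,c}\le 1$ is itself facet-defining, and no off-class coordinates arise.

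Part B (in-class coefficients). It remains to determine $\lambda_{v,c}$ for all $v$, and this is where the conforming collection $\mathcal{H}=\{H_v\}$ enters. The idea is to realize each hanger, augmented by the connector paths of condition~\ref{conforming:path}, as a tight connected subgraph of class $c$, and then to peel this structure apart while preserving tightness. For a hanger $H_v$—an in-arborescence meeting each class at most once by~\ref{conforming:arborescence}—I would extend inside each met class $W$ from the hanger's vertex in $W$ to the $S$-representative $s_W$ along the path of~\ref{conforming:path}; by~\ref{conforming:path} these extensions create no new crossings, so the resulting subgraph $T_v$ satisfies $|S\cap V(T_v)|=p$ and $\sum\hat d=p-1$ and hence lies in $\hat F$. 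To isolate a coefficient I would delete a leaf of the class-tree—an entire met class $W$ together with its connector/hanger vertices and the single crossing arc feeding it—which removes a connected set $R$ with $|S\cap R|=1$ and $\sum_{u\in R}\hat d(u)=1$, whence $\sum_{u\in R}\hat\lambda_{u,c}=0$ and tightness is preserved; then $\lambda(T)=\lambda(T\setminus R)=\lambda_0$ yields $\sum_{u\in R}\lambda_{u,c}=0$. Processing classes from the leaves of the class-tree inward and, within each class, vertices from $s_W$ outward along the paths of~\ref{conforming:path} (using sibling hangers/paths and single-leaf deletions to expose one new vertex per step), I would solve successively for each $\lambda_{w,c}$, the nestedness~\ref{conforming:subgraph} guaranteeing that every intermediate object is again a genuine hanger (hence tight). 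A single absolute equation $\lambda(T_0)=\lambda_0$ with $\hat\lambda(T_0)=1$ then fixes the global scalar $a=\lambda_0$, giving $\lambda_{w,c}=a\hat\lambda_{w,c}$ throughout; full-dimensionality (Theorem~\ref{thm:polytope:basics}) ensures this pins down the facet.

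Combining Parts A and B gives $\lambda=a\hat\lambda$ and $\lambda_0=a\hat\lambda_0$, so $\hat F$ is a facet. The hard part will be Part B: verifying that every perturbation used in the induction is simultaneously (a) a connected subgraph of $G$, (b) tight for~\eqref{ineq:cut-general}, and (c) expressible through already-determined coefficients, so that each relation pins down one new $\lambda_{w,c}$. Conditions~\ref{conforming:arborescence}–\ref{conforming:path} are engineered exactly for this: \ref{conforming:arborescence} makes the crossing arithmetic of each branch exact, \ref{conforming:subgraph} makes the hangers compose consistently so that peeling and regrowing stays inside the hanger family, and \ref{conforming:path} supplies connectors to the $S$-representatives that introduce no spurious crossings. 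The most delicate point I anticipate is arranging the single-leaf deletions and sibling swaps in an order that isolates exactly one unknown at each step; I expect the nestedness~\ref{conforming:subgraph} together with the per-class single-hit property of~\ref{conforming:arborescence} to be precisely what guarantees such an order exists.
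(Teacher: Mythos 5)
Your Part A --- deriving $\lambda_{v,b}=0$ for all $b\neq c$ from the two vertices of $S$ with $\hat d=0$ supplied by condition~\ref{property:zero-indegree2} --- is exactly the paper's argument, and your tightness criterion (equality throughout~\eqref{eq:validity}) is correct. The gap is in the peeling mechanism of Part B. You assert that deleting a leaf class $W$ of the class-tree of $T_v$ removes a chunk $R$ containing the head of ``the single crossing arc feeding it'', so that $\sum_{u\in R}\hat d(u)=1$, hence $\sum_{u\in R}\hat\lambda_{u,c}=0$ and tightness is preserved. But a hanger is an \emph{in}-arborescence: every crossing arc of $T_v$ is oriented from the deeper endpoint toward the root. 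So a leaf class of the class-tree (other than possibly the one containing the root of the hanger) is \emph{drained}, not fed: the unique incident crossing edge has its tail in $R$, giving $\sum_{u\in R}\hat d(u)=0$ and $\sum_{u\in R}\hat\lambda_{u,c}=1$, and deleting $R$ drops the left-hand side of~\eqref{ineq:cut-general} from $1$ to $0$, i.e.\ leaves $\hat F$. A minimal counterexample: the star with center $w$ and leaves $z_1,z_2$, $\mathcal{W}$ the partition into singletons, both edges oriented into $w$, $S=V(G)$, yielding the facet $x_{z_1,c}+x_{z_2,c}-x_{w,c}\le 1$. All hypotheses of the theorem hold, $T_w=\{w,z_1,z_2\}$ has class-tree $\{z_1\}-\{w\}-\{z_2\}$, and \emph{no} class can be removed while keeping the remainder connected and tight, so your induction cannot start.

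The paper sidesteps this by never comparing $T_v$ with $T_v$ minus a chunk. It inducts bottom-up on the \emph{height} of the nested trees, using that each $T_u$ with $u\in V(T_v)$ is an \emph{independently} tight point of $\hat F$ (guaranteed by~\ref{conforming:subgraph}); the identity $e(T_v,c)=e(v,c)+\sum_{u\in\hat N(v)}e(T_u,c)$, with the extra summand $e(T_{v'},c)$ for the neighbour $v'$ of $v$ on the path to $s(v)$ when $v\notin S$, turns the single equation $\lambda^{T}e(T_v,c)=\lambda_0$ into $\lambda_{v,c}+\hat d(v)\lambda_0\,(+\,\lambda_0)=\lambda_0$, isolating $\lambda_{v,c}$ in one stroke (in the star: $\lambda_{w,c}+2\lambda_0=\lambda_0$). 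If you insist on a deletion-based argument you must peel from the root of the arborescence downward --- the chunks with $\hat\lambda$-sum zero sit at the root end of the class-tree, not at its leaves --- and you still need the independent tightness of the subtrees to split the resulting sum relations into individual coefficients, at which point you have reconstructed the paper's induction.
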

\begin{proof}

    The validity of inequality~\eqref{ineq:cut-general} is shown in Proposition~\ref{prop:validity:general-cut}.
    For each $v \in V(G)$, let $s(v)$ be the single vertex in $S\cap W$ where $W\in \mathcal{W}$ and $v \in W$.
    Let~$\mathcal{H}=\{H_v\}_{v \in V(G)}$ be a conforming collection of hangers in  $D=(V(G), \vec E(\mathcal{W}))$.
    For each $v \in V(G)$, $\hat N(v)$ denotes the set of inneighbors  of $v$ in $H_v$.

    Consider a vertex~$v \in V(G)$, and note that $|V(H_v)\cap W| \le 1$ for all~$W \in \mathcal{W}$ by~\ref{conforming:arborescence}.
    For every $W \in \mathcal{W}$ such that $V(H_v)\cap W =\{u\}$ for a vertex $u \in V(G)$, let $P_{W}$ be a (simple) path in $G[W]$ between $u$ and $s(u)$ where all vertices $v',u' \in V(P_W)$ with $v' \neq u'$ satisfy $\mathcal{W}_{v'}\cap \mathcal{W}_{u'} = \{W\}$.
    The existence of these paths is guaranteed by~\ref{conforming:path}. We next use these paths to connect vertices in $S$ to the hangers.

    Let $T_v$ be the (undirected) graph obtained from the union of the underlying graph of $H_v$, $P_W$  for all $W \in \mathcal{W}$ such that $V(H_v)\cap W \neq \emptyset$,  and $T_z$ for all $u \in V(P_W)\setminus(V(H_v)\cap W)$ and $z \in \hat N(u)$.
It follows from~\ref{conforming:arborescence} and the construction above that $T_v$ is a tree rooted at $v$. 
    Because of~\ref{conforming:subgraph}, there exists a collection~$\mathcal{T}=\{T_v\}_{v \in V(G)}$ such that  $T_u \subseteq T_v$ for all $u,v \in V(G)$ with $u \in V(T_v)$.
    In Figure~\ref{fig:example:tree}, we show examples of~$T_v$ obtained using the hangers depicted in Figure~\ref{fig:example:hangers}.
    
    \begin{figure}[t!]
\centering
\begin{subfigure}{.47\linewidth}
    \centering
    \centering
    \begin{tikzpicture}[
auto,
            node distance = 1.5cm, semithick, scale=0.8, 
]

        \tikzstyle{every state}=[
            draw = black,
            semithick,
            fill = white,
            minimum size = 4mm,
            scale=0.9,
            inner sep=1pt
        ]

        \node[state] (v1) [very thick]{$v_1$};
        \node[state] (v2) [above right of=v1] {$v_2$};
        \node[state] (v3) [below right of=v1] {$v_3$};
\node[state] (v4) [below right of=v2, very thick] {$v_4$};
        \node[state] (v5) [above right of=v4] {$v_5$};
        \node[state] (v6) [below right of=v4] {$v_6$};
        \node[state] (v7) [below right of=v5, very thick] {$v_7$};

        \node[draw,dotted,fit=(v1) (v2) (v3)] {};
        \node[draw,dotted,fit=(v5) (v6) (v7)] {};
        \node[draw,dotted,fit=(v4)] {};

\path (v1) edge node{} (v3);
\path (v4) edge node{} (v3);
        
        \path (v7) edge node{} (v5);
\path (v4) edge node{} (v5);

\end{tikzpicture}\caption{Tree $T_{v_5}$ formed by $H_{v_5}$, and the paths $v_1v_3$ and $v_5v_7$.
    \\\hfill} 
\end{subfigure}\hfill
\begin{subfigure}{.47\linewidth}
    \centering
    \begin{tikzpicture}[
auto,
            node distance = 1.5cm, semithick, scale=0.8, 
]

        \tikzstyle{every state}=[
            draw = black,
            semithick,
            fill = white,
            minimum size = 4mm,
            scale=0.9,
            inner sep=1pt
        ]

        \node[state] (v1) [very thick]{$v_1$};
        \node[state] (v2) [above right of=v1] {$v_2$};
        \node[state] (v3) [below right of=v1] {$v_3$};
\node[state] (v4) [below right of=v2, very thick] {$v_4$};
        \node[state] (v5) [above right of=v4] {$v_5$};
        \node[state] (v6) [below right of=v4] {$v_6$};
        \node[state] (v7) [below right of=v5, very thick] {$v_7$};

        \node[draw,dotted,fit=(v1) (v2) (v3)] {};
        \node[draw,dotted,fit=(v5) (v6) (v7)] {};
        \node[draw,dotted,fit=(v4)] {};

        \path (v1) edge node{} (v2);
\path (v3) edge node{} (v2);
\path (v4) edge node{} (v3);
        
\path (v7) edge node{} (v6);
\path (v4) edge node{} (v6);

\end{tikzpicture}\caption{Tree $T_{v_6}$ formed by $H_{v_6}$, and the paths $v_1v_2v_3$ and $v_6v_7$. 
    It contains, for example, $T_{v_3}=v_1v_2v_3$ and $T_{v_2}=v_1v_2$. }
\end{subfigure}\hfill
\caption{Examples of trees $T_v$ formed by hangers depicted in Figure~\ref{fig:example:hangers}. Thicker circles identify the vertices in $S=\{v_1, v_4, v_7\}$.}
    \label{fig:example:tree}
\end{figure}

    Let $\hat{F} = \{ x\in \poly(G,k) : \hat{\lambda}^{T}x = \hat{\lambda}_0 \}$ be a face of $\poly(G,k)$ where $(\hat{\lambda}^{T}, \hat{\lambda}_0)$ corresponds to inequality~\eqref{ineq:cut-general} for a fixed $c \in [k]$.
    Suppose that $F= \{ x\in \poly(G,k) : \lambda^T x = \lambda_0 \}$ is a face of $\poly(G,k)$ that contains~$\hat{F}$.
    We shall prove that $\hat F$ is a facet of $\poly(G,k)$ by  induction on the height of the trees in $\mathcal{T}$.
    For every~$i \in \Z_\geq$, let us denote by $\mathcal{T}_i$ the (possibly empty) subset  of~$\mathcal{T}$ containing the trees of height equal to $i$.
    
    First note that there must exist a tree of height zero in $\mathcal{T}$ as $\mathcal{H}$ is conforming.
    Let $T_v \in \mathcal{T}$ be a tree of height zero, that is, it has a single vertex~$v$.
    By the definitions of $H_v$ and $T_v$, it holds that $\hat d(v)=0$  and $ v \in S$.
Hence, the vector $e(v, c)$ belongs to~$\hat F$, and so $\lambda_{v, c} = \lambda_0$.
    Suppose now $k\ge 2$.
    For every $b \in [k]\setminus\{c\}$ and $u \in V(G)\setminus\{v\}$, the vectors $e(v,c)$ and $e(v,c) + e(u,b)$ belong to $\hat F$, which implies $\lambda_{u,b}=0$.
    To compute the remaining zero coefficients, note that by~\ref{property:zero-indegree2} there is $u \in S\setminus \{v\}$ such that $\hat d(u)=0$.
    As a consequence, for every $b \in [k]\setminus\{c\}$, $e(u,c)$ and $e(u,c)+e(v, b)$ belong to~$\hat F$, and thus $\lambda_{v, b} = 0$.
    This completes the proof of the base case of our induction.

    Let $j \in \Z$ with $j \ge 1$ and $\mathcal{T}_j \neq \emptyset $, and
    suppose that, for all $i \in  \{0, \ldots, j-1\}$,
we have $e(T, c) \in \hat F$ (equivalently,  $\hat \lambda^T e(T,c)=1$) for every $T \in \mathcal{T}_i$.
We next compute the non-zero entries of~$\lambda$.
    Consider a tree $T_v \in \mathcal{T}_j$.
    If $v \in S$, then
    $e(T_{v}, c) = e(v, c)+\sum_{u \in \hat N(v)} e(T_u,c)$.
By the induction hypothesis, it follows that 
    \[\hat \lambda^T e(T_{v}, c) = (1-\hat d(v)) + \sum_{u \in \hat N(v)} \hat \lambda^T e(T_u,c) = (1-\hat d(v)) + \hat d(v) = 1.\]
    Thus, $e(T_{v}, c) \in \hat F$, and we conclude that $\lambda_{v,c} = (1-  \hat d(v)) \lambda_0$ since $\lambda^T e(T_{v},c) = \lambda_{v,c} + \sum_{u \in \hat N(v)} \lambda^T e(T_u,c) = \lambda_{v,c} + \sum_{u \in \hat N(v)} \lambda_0=\lambda_{v,c} + \hat d(v) \lambda_0=\lambda_0$.

    Suppose now that $v \notin S$.
By the construction of $T_v$,  we have
    \begin{equation}\label{eq:non-sources}
    e(T_v,c) = e\left(T_{s(v)}, c\right) + \sum_{u \in V(P)\setminus\{s(v)\}} \left( e(u,c) + \sum_{z \in \hat N(u)} e(T_z, c) \right),
    \end{equation}
where $P$ is the path in $T_v$ linking $v$ and $s(v)$.
For each $u \in V(P)\setminus\{s(v)\}$, it follows from the induction hypothesis  that 
    \[\hat \lambda^T e(u,c) + \sum_{z \in \hat  N(u)} \hat \lambda^T e(T_z,c)= -\hat d(u) + \sum_{z \in \hat N(u)} 1=-\hat d(u) + \hat d(u) =0.\] 
    It follows from the previous equation, equation~\eqref{eq:non-sources}, and the induction hypothesis that
    \(\hat \lambda^T e(T_v,c) = \hat \lambda^T e(T_{s(v)}, c) = 1.\)
    Hence $e(T_v,c) \in \hat F$.
Let $v'$ be the vertex adjacent to $v$ in the path between $v$ and $s(v)$ in $T_v$. 
    Observe now that 
    \(e(T_v, c) = e(v,c) + \sum_{u \in \hat N(v)} e(T_u, c) + e(T_{v'},c).\)
As a consequence, we have 
    \[\lambda^T e(T_v,c) = \lambda^T e(v,c) + \sum_{u \in \hat N(v)} \lambda^T e(T_u, c) + \lambda^T e(T_{v'},c) = \lambda_{v,c} + \hat d(v)\lambda_0 +\lambda_0 =\lambda_0.\]
    Thus $\lambda_{v,c}=-\hat d(v)\lambda_0$.
Therefore, it holds that $\lambda = \lambda_0 \hat \lambda$, and so inequality~\eqref{ineq:cut-general} induces a facet of $\poly(G,k)$.
 \end{proof}

To conclude this section, we observe that the properties in the statement of Theorem~\ref{thm:gen-connectiviy:facetness} are also satisfied by the facet-inducing inequalities of~\eqref{ineq:cut} and~\eqref{ineq:indegree}.

First let $Z$ be  a minimal $u,v$-separator of a pair of non-adjacent vertices $u,v \in V(G)$.
Consider now the corresponding connectivity inequality~\eqref{ineq:cut-general} obtained as in the proof of Proposition~\ref{prop:connectivity}, and note that one can easily find a collection of hangers satisfying~\ref{conforming:arborescence} and~\ref{conforming:subgraph} as all edges are oriented toward vertices in $Z$, and $\hat d(u)= \hat d(v) = 0$.  Condition~\ref{conforming:path} is satisfied by any collection of hangers in this orientation since, for every vertex in the class containing $v$, there is a path from this vertex to $v$ which contains at most one vertex in $Z$ as this separator is minimal.

For the facet-defining indegree inequalities~\eqref{ineq:indegree}, 
note that by Corollary~\ref{cor:simple-facets}\ref{cor:item:indegree} the orientation is acyclic, and so, for each vertex, there is only one possible hanger, and it is an arborescence. 
This implies condition~\ref{conforming:subgraph}.
Moreover, \ref{conforming:arborescence} and \ref{conforming:path} are satisfied as every class in the partition has size one.

 \section{Separation of generalized connectivity inequalities}\label{sec:separation-single}

Let $n$ and $m$ denote the number of vertices and edges of $G$, respectively.
The separation problems associated with the connectivity inequalities~\eqref{ineq:cut} on input~$(G,k)$ can be reduced to the \textsc{Minimum Vertex Cut} problem which can be solved using Orlin's Max Flow algorithm  in time $\bigO(kn^3 m)$.
Regarding the indegree inequalities~\eqref{ineq:indegree}, for each~$c \in [k]$, the associated separation problem can be solved in $\bigO(n+m)$ time by simply orienting every edge $\{u,v\} \in E(G)$ from $u$ to $v$ if~$\bar x_{u,c}>\bar x_{v,c}$, where $\bar x \in \R^{nk}_\geq$,  as proposed by Wang et al.~\cite{wang2017imposing}.

The remainder of this section is devoted to the design of an algorithm to solve the separation problem associated with inequalities~\eqref{ineq:cut-general} in polynomial time for a given (fixed) partition of $V(G)$.
Let~$\mathcal{W}=\{W_i\}_{i  \in [\ell]}$ be a partition of $V(G)$.
Given a vector $x^* \in \R^{nk}_\geq$  and $c \in [k]$, the separation problem associated with~\eqref{ineq:cut-general} (for the partition $\mathcal{W}$) reduces to finding an orientation of the edges in $E(\mathcal{W})$ that minimizes \(\sum_{v \in V(G)} \hat d(v) x^*_{v,c}\) as the left-hand side of~\eqref{ineq:cut-general} is equal to 
\[\sum_{v \in S} x_{v,c} - \sum_{v \in V(G)} \hat d(v) x_{v,c}\]
and the choice of $S$ is trivial.
We next show how to reduce this problem to $\bigO(n^2)$ bipartite instances of the \textsc{Minimum Weighted Vertex Cover} problem (VC).
Given a graph  $H$ and a weight function $w \colon V(H) \to \Q_\geq$,  VC consists in finding a set $C \subseteq V(H)$ such that $\{u,v\}\cap C\neq \emptyset$ for every edge $\{u,v\} \in E(H)$, and $\val(C):=\sum_{v \in C} w(v)$ is minimum.
This problem can be solved in polynomial time on bipartite graphs by computing maximum flows (see~Chlebík and  Chlebíková~\cite{CHLEBIK2008292}).

The core of the separation procedure is described in Algorithm~\ref{alg:separation}, which uses a subroutine called $\textsc{MinW-VC-Bipartite}$ for computing (in polynomial time) a minimum-weight vertex cover of a bipartite graph.

\begin{algorithm}[t!]
  \begin{algorithmic}[1]
    \Require{Partition $\mathcal{W}=\{W_i\}_{i \in [\ell]}$, class $c \in [k]$, vector $x^* \in \R^{nk}_\ge$}
    \Ensure{Orientation $\vec E$ of $E(\mathcal{W})$}
    
    \State $ \vec E \gets \emptyset$
    \ForAll{$v \in V(G)$}    $w(v) \gets x^*_{v,c}$ \label{line:w}
    \EndFor
    \For{$i=1,\ldots,\ell-1$} \label{line:loop-i}
        \For{$j=i+1, \ldots, \ell$} \label{line:loop-j}
\State Let $G_{ij}$ be the subgraph of $G$ induced by the edges $\{\{u,v\} \in E(\mathcal{W}) : u \in W_i \text{ and } v \in W_j\}$ \label{line:bip}
\State $C_{ij} \gets $\textsc{MinW-VC-Bipartite}$(G_{ij},w_{ij})$ where $w_{ij}$ is $w$ restricted to $V(G_{ij})$\label{line:opt-vc}
\State Let $\vec E(W_i,W_j)$ be an orientation of $E(G_{ij})$ where each edge is oriented toward an endpoint in $C_{ij}$ \label{line:orient}
            \State $\vec E \gets \vec  E \cup \vec E(W_i,W_j)$ 
        \EndFor
    \EndFor
\State \Return $\vec E$
  \end{algorithmic}
  \caption{\textsc{Separation-Connectivity}($\mathcal{W}$, $c$, $x^*$)}
  \label{alg:separation}
\end{algorithm}

\begin{theorem}
Let $\ell \in \Z$ be positive.
The separation problem associated with inequalities~\eqref{ineq:cut-general} for a given $\ell$-partition of $V(G)$ can be solved in $\bigO(knm)$ time using Algorithm~\ref{alg:separation}.
\end{theorem}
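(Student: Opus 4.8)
The plan is to treat correctness and running time separately, obtaining the bound $\bigO(knm)$ by running Algorithm~\ref{alg:separation} once per class $c \in [k]$, each call costing $\bigO(nm)$. For correctness I would start from the reduction already noted before the algorithm: for fixed $c$ the left-hand side of~\eqref{ineq:cut-general} equals $\sum_{v \in S} x^*_{v,c} - \sum_{v \in V(G)} \hat d(v)\,x^*_{v,c}$, and since $\hat d$ is determined by the orientation alone, the two terms are optimized independently. Maximizing the first term over admissible $S$ (one vertex per class) is the trivial choice of a maximum-value vertex in each class, so it remains to show that Algorithm~\ref{alg:separation} returns an orientation of $E(\mathcal{W})$ minimizing $\sum_{v} \hat d(v)\,x^*_{v,c}$.

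The crux is a decomposition of this objective over class-pairs followed by a reduction to vertex cover. For $v \in W_j$, the arcs with head $v$ come only from edges of $E(\mathcal{W})$, hence from vertices in classes $W_i$ with $i \neq j$; writing $A_i(v)=1$ when some vertex of $W_i$ is the tail of such an arc and $0$ otherwise, the definition of $\hat d$ gives $\hat d(v)=\sum_{i\neq j} A_i(v)$. Because every crossing edge $\{u,v\}$ with $u\in W_i$, $v\in W_j$ lies only in $G_{ij}$, the value $A_i(v)$ depends solely on the orientation of $G_{ij}$, and these orientations are mutually independent across pairs. Regrouping by pairs yields
\[
\sum_{v \in V(G)} \hat d(v)\,x^*_{v,c} \;=\; \sum_{1\le i<j\le \ell} w_{ij},
\]
where $w_{ij}$ is the total $x^*_{\cdot,c}$-weight of the vertices of $G_{ij}$ receiving at least one in-arc. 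I would then argue, for each bipartite $G_{ij}$ with weights $w(v)=x^*_{v,c}$, that $\min w_{ij}=\val(C_{ij})$ for a minimum-weight vertex cover $C_{ij}$: the set of heads of any orientation is a vertex cover (each oriented edge places its head in the set), so $w_{ij}\ge \val(C_{ij})$; conversely, orienting every edge toward an endpoint lying in a minimum cover $C_{ij}$ (one exists since $C_{ij}$ covers the edge) produces a head set inside $C_{ij}$, so $w_{ij}\le \val(C_{ij})$. This is exactly lines~\ref{line:opt-vc}--\ref{line:orient}, so each pair attains its minimum and, by independence, the algorithm attains the global minimum; correctness follows.

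For the running time I would bound one call (fixed $c$) and multiply by $k$. Initializing the weights (line~\ref{line:w}) is $\bigO(n)$, and all graphs $G_{ij}$ are built in $\bigO(m)$ total by bucketing $E(\mathcal{W})$ by class-pairs. Writing $n_{ij}=|V(G_{ij})|$ and $m_{ij}=|E(G_{ij})|$, each invocation of \textsc{MinW-VC-Bipartite} is a max-flow computation on the standard bipartite cover network with $\bigO(n_{ij})$ vertices and $\bigO(n_{ij}+m_{ij})$ edges, solvable in $\bigO(n_{ij}(n_{ij}+m_{ij}))$ time; as $G_{ij}$ is induced by its edges it has no isolated vertices, so $n_{ij}=\bigO(m_{ij})$ and this is $\bigO(n_{ij}m_{ij})$. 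Since $n_{ij}\le n$ and the sets $E(G_{ij})$ partition $E(\mathcal{W})$, we obtain $\sum_{i<j} n_{ij}m_{ij}\le n\sum_{i<j}m_{ij}=n\,|E(\mathcal{W})|\le nm$, giving $\bigO(nm)$ per class and $\bigO(knm)$ overall.

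The main obstacle I anticipate is the correctness core rather than the accounting: establishing cleanly that $\hat d$ decomposes over mutually independent class-pair orientations, and that within each pair the quantity to be minimized is precisely the weight of the set of heads, which then coincides with a minimum-weight bipartite vertex cover. Once that equivalence is in place, the complexity bound is a routine aggregation over pairs using the fact that the edge sets $E(G_{ij})$ partition $E(\mathcal{W})$.
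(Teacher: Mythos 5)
Your proposal is correct and follows essentially the same route as the paper: decompose $\sum_v \hat d(v)x^*_{v,c}$ over class pairs, identify each pair's contribution with a minimum-weight vertex cover of the bipartite graph $G_{ij}$, orient edges toward the cover, and aggregate the max-flow costs using that the $E(G_{ij})$ partition $E(\mathcal{W})$. The only (immaterial) difference is in the converse direction: you bound the head set's weight by $\val(C_{ij})$ via nonnegativity of the weights and the head set being a subset of the cover, whereas the paper argues equality by noting that minimality forces every positive-weight cover vertex to receive an in-arc.
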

\begin{proof}
    Let $\mathcal{W}=\{W_i\}_{i \in [\ell]}$ be an $\ell$-partition of $V(G)$, $c \in [k]$, and $x^* \in \R^{nk}_\ge$.
    Firstly consider any orientation $\vec E(\mathcal{W})$ of the edges in $E(\mathcal{W})$, and the function~$\hat d\colon V(G) \to [\ell]\cup\{0\}$ corresponding to such orientation.
    For each $i,j \in [\ell]$ with $i<j$, and each $v \in V(G)$, we define $d_{ij}(v)=1$ if there exists an arc between $W_i$ and $W_j$ with head $v$, and $d_{ij}(v)=0$ otherwise.
    Note that $d_{ij}(v)=0$ if $v \notin W_i\cup W_j$, and that $\hat d(v) = \sum_{i,j \in [\ell] : i < j} d_{ij}(v)$.
    Let us define $C_{ij} = \{v \in V(G) : d_{ij}(v)=1\}$ for all $i,j \in [\ell]$ with $i<j$.
    One may easily check that $C_{ij}$ is a vertex cover of the edges in $E(\mathcal{W})$ with extremities in $W_i$
    nd $W_j$.
    Assuming that $x^*_{v,c}$ corresponds to the weight of vertex $v$ for every $v \in V(G)$,  it holds that $\val(C_{ij}) = \sum_{v \in V(G)} d_{ij}(v) x^*_{v,c}$.
    As a consequence, we have the following sequence of equations:
    \begin{equation}\label{eq:cover-orientation}
    \sum_{v \in V(G)} \hat d(v) x^*_{v,c} = \sum_{i,j \in [\ell] : i < j} \sum_{v \in V(G)}  d_{ij}(v) x^*_{v,c}  = \sum_{i,j \in [\ell] : i < j} \val(C_{ij}). 
    \end{equation}
    
    Let $i,j \in [\ell]$ with $i<j$.
    Consider now an instance~$(G_{ij}, w)$ of VC consisting of a bipartite graph $G_{ij}$ with bipartition $\{W_i, W_j\}$ and edges $\{\{u,v\} \in E(\mathcal{W}) : u \in W_i \text{ and } v \in W_j\}$ (line~\ref{line:bip} of Algorithm~\ref{alg:separation}), and a weight function $w_{ij}(v)=x^*_{v,c}$ for all $v \in V(G_{ij})$ (line \ref{line:w}).
    Let $C^*_{ij}$ be a minimum weight vertex cover of $(G_{ij}, w_{ij})$ (line~\ref{line:opt-vc}), 
and define an orientation $\vec E(W_i,W_j)$ such that every edge is oriented toward a vertex in $C^*_{ij}$ (line~\ref{line:orient}).
    We remark that, for each $v \in C^*_{ij}$ with $w_{ij}(v)>0$, there is a vertex $u$ adjacent to $v$ such that $ u \notin C^*_{ij}$ due to the choice of the vertex cover.
    Hence the vertices in~$C^*_{ij}$ with positive weight are head of some arc in~$\vec E(W_i,W_j)$.
    It follows from this remark and previous definitions that $\val(C^*_{ij}) = \sum_{v \in V(G)} d_{ij}(v) x^*_{v,c}$, where~$d_{ij}$ is defined with respect to $\vec E(W_i,W_j)$.
    
    By equation~\eqref{eq:cover-orientation}, the orientation of $E(\mathcal{W})$ obtained from optimal solutions of the vertex cover instances $(G_{ij},w_{ij})$, for all $i,j \in [\ell]$ with $i<j$, gives the minimum possible value of $\sum_{v \in V(G)} \hat d(v) x^*_{v,c}$.
    This proves that Algorithm~\ref{alg:separation} produces an optimal orientation $E(\mathcal{W})$.
    To conclude the separation, it suffices noticing that, for each $i \in [\ell]$, one may choose a vertex in $v_i \in \arg \max_{v \in W_i} x^*_{v,c}$, and take $S = \{v_i\}_{i \in \ell}$.

     Note that \textsc{MinW-VC-Bipartite} can be executed in 
$\bigO(|V(G_{ij})||E(G_{ij})|)$ using Orlin's maximum flow
    algorithm~\cite{ORLIN2013} for all $i,j\in [\ell]$ with $i<j$.
    Therefore, Algorithm~\ref{alg:separation} runs in $\bigO(nm)$ time for each $c \in [k]$, and so the separation of~\eqref{ineq:cut-general} can be performed in $\bigO(knm)$ time. 
 \end{proof}

We next observe that the running time of Algorithm~\ref{alg:separation} equals the running times of the separation routines described in the literature for the connectivity and indegree inequalities.

Considering the connectivity inequalities~\eqref{ineq:cut},  Algorithm~\ref{alg:separation} with $\ell=2$ is essentially the same algorithm described in the literature for separating these inequalities by computing a maximum flow in a network (see Fischetti et al.~\cite{Fischetti2017}).

Observe now that the loops in lines~\ref{line:loop-i} and~\ref{line:loop-j} of Algorithm~\ref{alg:separation} can be replaced by a loop on
each $W \in \mathcal{W}$ and each edge with exactly one endpoint in $W$.  
In the case $\ell=n$, the minimum vertex cover in line~\ref{line:opt-vc} can be trivially computed in constant time (simply choose the vertex of smallest weight).
Therefore, Algorithm~\ref{alg:separation} can be implemented to run in $\bigO(n+m)$ time when $\ell=n$, which corresponds to solving the separation problem of the indegree inequalities~\eqref{ineq:indegree} (for any given class $c \in [k]$) in linear time.
That is the same time complexity of the separation algorithm proposed by Wang et al.\cite{wang2017imposing}.

We next show that Algorithm~\ref{alg:separation} can be implemented to run in linear time even when $\ell <n$, provided that all edges in $E(\mathcal{W})$ have endpoints in \emph{singleton} classes, that is, classes in $\mathcal{W}$ with cardinality equal to one.
We say that a partition $\mathcal{W}$ of $V(G)$ is \emph{nice} if every edge in $E(\mathcal{W})$ has an endpoint in a singleton class.

\begin{theorem}
The separation problem of inequalities~\eqref{ineq:cut-general} can be solved in $\bigO(k(n+m))$ time on nice partitions using Algorithm~\ref{alg:separation}.
\end{theorem}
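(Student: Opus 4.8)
The plan is to reuse the correctness of Algorithm~\ref{alg:separation} established in the previous theorem and to show that, on a nice partition, the algorithm admits a linear-time implementation. The key structural observation is that niceness controls the shape of every bipartite instance solved in line~\ref{line:opt-vc}. Since every edge in $E(\mathcal{W})$ has an endpoint in a singleton class, no edge of $E(\mathcal{W})$ can have both endpoints in non-singleton classes. Consequently, for every pair $i<j$ with $E(G_{ij})\neq\emptyset$, at least one of $W_i,W_j$ is a singleton, and hence $G_{ij}$ is either a single edge (when both classes are singletons) or a star centered at the vertex lying in the singleton class (when exactly one is a singleton). First I would record this as a short structural claim, proved directly from the definition of nice partition.

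With this in hand, the second step is to replace the call to \textsc{MinW-VC-Bipartite} by a closed-form computation. For a star with center $s$ and leaf set $L$, a minimum-weight vertex cover is $\{s\}$ if $w(s)\le \sum_{v\in L}w(v)$ and $L$ otherwise; for a single edge $\{s,t\}$ it is the endpoint of smaller weight. Since $w(v)=x^*_{v,c}\ge 0$, these formulas are immediate and computable in time proportional to the number of edges of the corresponding instance. By equation~\eqref{eq:cover-orientation}, orienting the edges of each $G_{ij}$ toward the chosen cover still yields an orientation minimizing $\sum_{v\in V(G)}\hat d(v)\,x^*_{v,c}$, so correctness is inherited from the previous theorem; only the running time of line~\ref{line:opt-vc} changes.

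The third step is the loop reorganization promised before the statement. Rather than iterating over all $\binom{\ell}{2}$ pairs (lines~\ref{line:loop-i}--\ref{line:loop-j}), I would iterate over the singleton classes: for each singleton $\{s\}$, every edge incident to $s$ is a cross edge, and I group the neighbors of $s$ by their classes using a scratch array indexed by class identifier together with a list of touched classes, so that the grouping and the subsequent reset cost $\bigO(\deg(s))$. For each touched class $W_j$ I compute the star (or single-edge) cover by the closed form above and orient the corresponding edges. Edges joining two singletons would otherwise be processed from both endpoints; I avoid this by handling each such edge exactly once, say from its lower-indexed endpoint, orienting it toward its smaller-weight endpoint. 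Summing the per-singleton work gives $\bigO\bigl(\sum_{s\text{ singleton}}\deg(s)\bigr)=\bigO(m)$, since every edge of $E(\mathcal{W})$ is incident to at least one singleton and is therefore counted at most twice; adding the $\bigO(n)$ weight initialization in line~\ref{line:w} yields $\bigO(n+m)$ for a fixed class $c$. Repeating over all $c\in[k]$ gives the claimed $\bigO(k(n+m))$ bound.

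The main obstacle I anticipate is the amortized bookkeeping in the third step: one must argue carefully that grouping each singleton's neighbors by class, computing the per-class cover, orienting the edges, and resetting the scratch storage can all be done in time linear in the singleton's degree, and that edges between two singletons are neither missed nor processed twice. Everything else---the structural claim and the closed-form covers---is routine once the shape of $G_{ij}$ is pinned down by niceness.
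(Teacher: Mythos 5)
Your proposal is correct and follows essentially the same route as the paper: niceness forces every nonempty $G_{ij}$ to be a star (or single edge), so the minimum-weight vertex cover in line~\ref{line:opt-vc} is either the singleton center or the leaf set and is computable in $\bigO(|E(G_{ij})|)$ time, with the loops reorganized to iterate over singleton classes and their incident edges. Your write-up merely spells out the amortized bookkeeping that the paper leaves implicit.
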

\begin{proof}
    To prove the claimed result, it suffices to show that every execution of line~\ref{line:opt-vc} in Algorithm~\ref{alg:separation} takes $\bigO(|E(G_{ij})|)$ time if either $W_i$ or $W_j$ is a singleton class.
    Assume, without loss of generality, that $W_i=\{u\}$ for a vertex $u \in V(G)$.
    Since $w_{ij}(v)\geq 0$ for all~$v \in V(G_{ij})$, and $G_{ij}$ has no isolated vertex, one may easily conclude that  either the set $\{u\}$, or $W_j$ is an optimal vertex cover of $G_{ij}$, and that such a solution can be computed in $\bigO(|E(G_{ij})|)$.
 \end{proof} \section{Multiclass inequalities}
\label{sec:multi}
We next derive strong valid inequalities for $\poly(G,k)$ that combine different classes.
In contrast to the single-class inequalities, the multiclass inequalities may have non-zero coefficients associated with any subset of classes $C \subseteq[k]$.
As we shall see, the proposed inequalities generalize~\eqref{ineq:cut} in different ways.
The ones in Section~\ref{subsec:multiway} are based on stable sets in $G$ instead of pairs of non-adjacent vertices (i.e., stable sets of size two) in $G$.
The inequalities in Section~\ref{subsec:pair} are induced by a collection of pairs of non-adjacent vertices.

\begin{comment}could we get something that generalizes both  families of inequalities?
\begin{proposition
    Let $C \subseteq [k]$ with $|C|\ge 1$, $Z$ be a separating set of $G$ and $V_c \subseteq  V(G)\setminus Z$  such that $|V_c\cap V(K)|\le 1$ for every~$K \in \mathcal{K}$.
The following inequality is valid for $\poly(G,k)$:
\begin{equation}\label{ineq:multicolor-cut}
       \sum_{c \in C} \sum_{v\in V_c}  x_{v,c} - \sum_{z\in Z} \sum_{c \in C} \beta_{zc} x_{z,c} \leq |C|,
    \end{equation}
    where $\beta_{zc} := ???\max\{|S|-|C|, \: 0\}$.
 \end{proposition}
These will be inequalities 60-65 in the porta output for a path with 5 vertices. 38-59 are inequalities (6). 
\\
In (60), $V_1=\{3,5\}$ and $V_2=\{1,3,5\}$, $Z=\{2,4\}$, 
$\beta_{41}=1$, $\beta_{21}=0$, $\beta_{42}=0$, and $\beta_{22}=1$. 
\\
In (62), $V_1=\{1,3\}$ and $V_2=\{1,3,5\}$, $Z=\{2,4\}$, 
$\beta_{41}=0$, $\beta_{21}=1$, $\beta_{42}=1$, and $\beta_{22}=0$. 
\\
In  (64), $V_1=\{1,5\}$ and $V_2=\{1,3,5\}$, $Z=\{2,4\}$, 
$\beta_{41}=0$, $\beta_{21}=0$, $\beta_{42}=1$, and $\beta_{22}=1$. The other three inequalities are symmetric. } 
\end{comment}

\subsection{Multiway inequalities}\label{subsec:multiway}

First we present the definition of multiway cuts, which generalizes the concept of separators.
Given a stable set $S \subset V(G)$,  a \emph{multiway cut} of $S$ in $G$ is set of vertices $Z \subseteq V(G)\setminus S$ such that there is no path in $G-Z$ between any pair of distinct vertices in $S$, that is, each component of $G-Z$ contains at most one vertex in $S$.
Let us denote by $\mathcal{K}$ the set of components of $G-Z$.

\begin{proposition}\label{prop:multicolor-cut-validity}
    Let $C \subseteq [k]$ with $|C|\ge 1$, $S \subseteq V(G)$ be a stable set of~$G$, $Z$ be a multiway cut of $S$ in $G$, and $\beta := \max\{|S|-|C|, \: 0\}$.
    The following inequality is valid for~$\poly(G,k)$.
    \begin{equation}\label{ineq:multicolor-cut}
        \sum_{v\in S} \sum_{c \in C} x_{v,c} - \sum_{z\in Z} \sum_{c \in C} \beta x_{z,c} \leq |C|.
    \end{equation}
\end{proposition}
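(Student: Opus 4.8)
The plan is to prove validity at the integer points of $\poly(G,k)$: since the polytope is the convex hull of the incidence vectors of connected $k$-subpartitions, it suffices to check that an arbitrary such vector $\chi(\subpart)$, with $\subpart = \{V_1,\dots,V_k\}$, satisfies~\eqref{ineq:multicolor-cut}. First I would rewrite the left-hand side evaluated at $\chi(\subpart)$ in compact form. Writing $s_c = |S \cap V_c|$ and $z_c = |Z \cap V_c|$ for each $c \in C$, the left-hand side equals $\sum_{c\in C}(s_c - \beta z_c)$, and the disjointness of the classes yields the global bound $\sum_{c \in C} s_c = |S \cap \bigcup_{c\in C}V_c| \le |S|$.

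The key structural step is a per-class observation: if a class $c$ uses no vertex of the multiway cut, i.e.\ $z_c = 0$, then $s_c \le 1$. Indeed, $V_c \cap Z = \emptyset$ means that $G[V_c]$ is a connected subgraph of $G - Z$, so $V_c$ is contained in a single component of $G - Z$; since $Z$ is a multiway cut of $S$, every component of $G - Z$ meets $S$ in at most one vertex, whence $s_c \le 1$.

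With this lemma in hand I would split into cases according to the value of $\beta = \max\{|S| - |C|, 0\}$. If $|S| \le |C|$ then $\beta = 0$ and the left-hand side is $\sum_{c\in C} s_c \le |S| \le |C|$, as required. If $|S| > |C|$, so that $\beta = |S| - |C| > 0$, I would further distinguish whether some class of $C$ meets $Z$. If $\sum_{c\in C} z_c \ge 1$, then $\beta \sum_{c\in C} z_c \ge \beta = |S| - |C|$, and combining with $\sum_{c\in C} s_c \le |S|$ gives a left-hand side of at most $|S| - (|S|-|C|) = |C|$. Otherwise $z_c = 0$ for every $c \in C$, so the lemma forces $s_c \le 1$ for each such class and the left-hand side is $\sum_{c\in C} s_c \le |C|$.

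The point I expect to be the genuine obstacle — and the reason the coefficient $\beta$ is the same for every $z \in Z$ rather than something finer — is that one cannot hope for a clean per-class inequality of the form $z_c \ge s_c - 1$: a star whose centre lies in $Z$ and whose leaves are terminals of $S$ assigned to one class shows that a single cut vertex can separate arbitrarily many terminals. The argument therefore has to be global, exploiting that it is enough for one class of $C$ to touch $Z$ in order to ``pay'' for the entire surplus $|S| - |C|$ at once, which is exactly what the two-way case split above accomplishes.
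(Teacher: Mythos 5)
Your proof is correct and follows essentially the same route as the paper's: the paper observes that a connected class containing two or more vertices of $S$ must intersect $Z$ (so the subtracted term already covers the surplus $|S|-|C|$), which is exactly the contrapositive of your key lemma that a class disjoint from $Z$ meets $S$ at most once. The only difference is how the case split is entered (on whether any class touches $Z$ versus on whether any class holds two terminals), which changes nothing of substance.
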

\begin{proof}
We assume that $|S|>|C|$, otherwise the inequality holds trivially. 
Consider a connected $k$-subpartition $\mathcal{V} = \{V_i\}_{i \in C}$ of $G$ and let $\bar x = \chi(\mathcal{V})$.
    If $|V_i \cap S| \leq 1$ for every $i \in C$, then 
    \(\sum_{v\in S} \sum_{c \in C} x_{v,c} \leq |C|\) and~\eqref{ineq:multicolor-cut} is satisfied by $\bar x$.
    Fix now $j \in C$ such that $|V_j \cap S| > 1$.
    Since $G[V_j]$ is connected, we have $Z\cap V_j \neq \emptyset$ and hence
    \[\sum_{v\in S} \sum_{c \in C} \bar x_{v,c} - \sum_{z\in Z} \sum_{c \in C} \beta \bar x_{z,c} \leq \sum_{v\in S} \sum_{c \in C} \bar x_{v,c} -|S| +|C| \le |C|,\]    
    where the last inequality holds because $\sum_{v\in S} \sum_{c \in C} \bar x_{v,c} \leq |S|$.
    Therefore, inequality~\eqref{ineq:multicolor-cut} is valid for $\poly(G,k)$.
 \end{proof}

Figure~\ref{fig:example:multicolor-cut} depicts two examples of inequalities~\eqref{ineq:multicolor-cut} .

\begin{figure}[t!]
\centering
\begin{subfigure}{.47\linewidth}
    \centering
    \begin{tikzpicture}[
auto,
            node distance = 1.5cm, semithick, scale=0.8, 
]

        \tikzstyle{every state}=[
            draw = black,
            semithick,
minimum size = 4mm,
            scale=0.9,
            inner sep=1pt
        ]

        \node[state] (u) {$v_4$};
\node[state, very thick] (v1) [above right of=u] {$v_1$};
        \node[state, very thick] (v2) [right  of=u] {$v_2$};
        \node[state, very thick] (v3) [below  right of =u] {$v_3$};

        \node[draw,dotted, fit=(u)] {};

        \path (u) edge node{} (v1);
        \path (u) edge node{} (v2);
        \path (u) edge node{} (v3);

\end{tikzpicture}\caption{$\sum_{i=1}^3 (x_{v_i,a}+x_{v_i,b}) - x_{v_4,a} - x_{v_4,b} \leq 2$\\} 
    \label{subfig:multicolor-cut:star}
\end{subfigure}\hfill
\begin{subfigure}{.47\linewidth}
    \centering
    \begin{tikzpicture}[
auto,
            node distance = 1.5cm, semithick, scale=0.8, 
            inner sep=1pt
]
\tikzstyle{every state}=[
            draw = black,
            thick,
minimum size = 4mm,
            scale=0.9
        ]

        \node[state] (v4) {$v_4$};
        \node[state, very thick] (v1) [above right of=v4] {$v_1$};
        \node[state, very thick] (v2) [below right of=v4] {$v_2$};
        \node[state, very thick] (v3) [below left of = v4] {$v_3$};
\node[state] (v5) [below right of=v1] {$v_5$};

        \node[draw,dotted,fit=(v4) (v5)] {};

        \path (v4) edge node{} (v1);
        \path (v4) edge node{} (v2);
        \path (v4) edge node{} (v3);
        \path (v5) edge node{} (v1);
        \path (v5) edge node{} (v2);
        
    \end{tikzpicture}
    \caption{$\sum_{i=1}^3 (x_{v_i,a}+x_{v_i,b}) - \sum_{i=4}^5 (x_{v_{i},a} + x_{v_{i},b}) \leq 2$}
    \label{subfig:multicolor-cut:c4}
\end{subfigure}\hfill
\caption{Examples of inequalities~\eqref{ineq:multicolor-cut}.
Thicker vertices identifies the stable set,  dotted rectangles represent the corresponding multiway cuts in the graphs and $C=\{a,b\}$.}
    \label{fig:example:multicolor-cut}
\end{figure}

We remark that inequalities~\eqref{ineq:multicolor-cut} contain  inequalities~\eqref{ineq:cut} as every separator of two non-adjacent vertices is also a multiway cut.
Consider the star depicted in Figure~\ref{subfig:multicolor-cut:star}.
Let $\bar x \in \R^{8}$ such that  $\bar x_{v_i,a} = \bar x_{v_i,b} =1/2$  for $i \in \{1,2,3\}$ and $\bar x_{v_4,a}= \bar x_{v_4,b}=1/4$ with classes $a$ and $b$.
Clearly, $\bar x$ satisfies \eqref{ineq:cover} and \eqref{ineq:cut}.
Indeed, inequality~\eqref{ineq:multicolor-cut} when $C=\{a\}$ or $C=\{b\}$  hold for $\bar x$.
On the other hand, it holds that $\sum_{i=1}^3 (\bar x_{v_i,a} + \bar x_{v_i,b}) - (\bar x_{v_4,a}+\bar x_{v_4,b}) = 3 - 1/2 > 2$, and thus inequality~\eqref{ineq:multicolor-cut} for~$C=\{a,b\}$ is violated.
This simple example shows that, considering multiple classes, one may cut fractional solutions that satisfy~\eqref{ineq:multicolor-cut} with a single class.

Wang et al.~\cite{wang2017imposing} proved that inequalities~\eqref{ineq:cut} are facet-defining if, and only if, the separator~$Z$ is minimal.
We shall see that this is no longer a necessary condition for~\eqref{ineq:multicolor-cut} to be facet-defining when more than one class is considered, i.e., when  $|C|\ge 2$.
Figure~\ref{subfig:multicolor-cut:c4} depicts an inequality~\eqref{ineq:multicolor-cut} where $Z$ is not minimal, and yet the inequality induces a facet.

Let $C$, $S$, and $Z$ be the sets defining an inequality~\eqref{ineq:multicolor-cut} as in the statement of Proposition~\ref{prop:multicolor-cut-validity}.
We say that the triple $(C,S,Z)$ is \emph{perfect} if
\begin{enumerate}[(i)]
        \item $|S|>|C|$; and \label{perfectness:size}
        \item for every $z \in Z$, $|\mathcal{K}_z| \geq |S|-|C|+1$, where $\mathcal{K}_z = \{K \in \mathcal{K} : N(z)\cap V(K) \neq \emptyset \text{ and }  S\cap V(K) \neq \emptyset\}$; and \label{perfectness:separating}
\item there exists $z \in Z$ with $|\mathcal{K}_z| \geq |S|-|C|+2$  if $|C|\ge2$.\label{perfectness:separating-2}
\end{enumerate}

\begin{lemma}\label{lemma:multicolor-cut-facetness}
    If inequality~\eqref{ineq:multicolor-cut} induces a facet of $\poly(G,k)$, then
    $(C,S,Z)$ is perfect.
\end{lemma}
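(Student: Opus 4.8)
The plan is to prove the contrapositive in three stages, one per defining property of a perfect triple. Throughout, let $\hat F = \{x \in \poly(G,k) : \hat\lambda^T x = |C|\}$ be the face induced by~\eqref{ineq:multicolor-cut}, let $\mathcal{K}$ be the set of components of $G-Z$, and recall from Theorem~\ref{thm:polytope:basics} that $\poly(G,k)$ is full-dimensional, so $\hat F$ is a facet if and only if the only valid equations holding on all of $\hat F$ are scalar multiples of $\hat\lambda^T x = |C|$. The whole argument rests on a per-class refinement of the validity proof of Proposition~\ref{prop:multicolor-cut-validity}: for a vertex $\chi(\mathcal{V})$ of $\hat F$ with $\mathcal{V}=\{V_c\}_{c\in C}$, write $m_c = |V_c\cap S|$ and $\zeta_c = |V_c\cap Z|$, and set $\phi_c = m_c - \beta\zeta_c$, so that the face equation reads $\sum_{c\in C}\phi_c = |C|$.

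First I would dispose of condition~\ref{perfectness:size}. If $|S|\le|C|$ then $\beta=0$ and~\eqref{ineq:multicolor-cut} reduces to $\sum_{v\in S}\sum_{c\in C} x_{v,c}\le|C|$, which is the sum over $v\in S$ of the inequalities $\sum_{c\in C}x_{v,c}\le 1$ (each implied by~\eqref{ineq:cover}). Hence its left-hand side never exceeds $|S|$: if $|S|<|C|$ the face is empty, and if $|S|=|C|$ every point of $\hat F$ must satisfy $\sum_{c\in C}x_{v,c}=1$ for all $v\in S$, placing $\hat F$ inside an intersection of $|S|$ distinct hyperplanes; in either case $\hat F$ is not a facet. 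So a facet forces $|S|>|C|$, i.e.\ $\beta>0$, which I use from here on.

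The crux is the bound $\phi_c\le 1$ for every $c\in C$ on each vertex of $\hat F$. Since $\beta>0$ and $\sum_{v\in S}\sum_{c\in C}x_{v,c}\le|S|$, the face equation yields $\beta\sum_{z\in Z}\sum_{c\in C}x_{z,c}\le|S|-|C|=\beta$, so at most one $Z$-vertex is used in total across the classes of $C$; in particular at most one class, say $c_0$, has $\zeta_{c_0}=1$, and $\zeta_c=0$ otherwise. For a class with $\zeta_c=0$, its block $V_c$ avoids $Z$ and is connected, hence lies in a single component of $G-Z$, which contains at most one vertex of $S$; thus $m_c\le 1$ and $\phi_c\le 1$. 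For $c_0$, letting $z$ be the unique $Z$-vertex of $V_{c_0}$, every $S$-vertex of $V_{c_0}$ lies in a distinct component $K$ with $S\cap V(K)\neq\emptyset$ that is joined to $z$ inside $V_{c_0}$, so $N(z)\cap V(K)\neq\emptyset$ and $K\in\mathcal{K}_z$; this gives $m_{c_0}\le|\mathcal{K}_z|$ and $\phi_{c_0}\le|\mathcal{K}_z|-\beta$.

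These estimates deliver the two remaining properties. For~\ref{perfectness:separating}, suppose some $z^*\in Z$ has $|\mathcal{K}_{z^*}|\le\beta$; were $z^*$ used by some class $c_0$ on a vertex of $\hat F$, we would get $\phi_{c_0}\le|\mathcal{K}_{z^*}|-\beta\le 0$, and with $\phi_c\le 1$ for the other classes this contradicts $\sum_{c\in C}\phi_c=|C|$. Hence $x_{z^*,c}=0$ for all $c\in C$ on every vertex, and so on all of $\hat F$; picking any $c_0\in C$, the valid equation $x_{z^*,c_0}=0$ has zero right-hand side, hence is not a scalar multiple of $\hat\lambda^T x=|C|$, and $\hat F$ is not a facet. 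For~\ref{perfectness:separating-2}, assume $|C|\ge 2$ and $|\mathcal{K}_z|\le\beta+1$ for all $z\in Z$; then the bound above gives $\phi_{c_0}\le 1$ as well, so $\phi_c\le 1$ for every $c\in C$ on every vertex, and $\sum_{c\in C}\phi_c=|C|$ forces $\phi_c=1$ for each $c\in C$. Thus every vertex of $\hat F$, hence all of $\hat F$, satisfies the $|C|$ valid equations $\sum_{v\in S}x_{v,c}-\beta\sum_{z\in Z}x_{z,c}=1$; these involve pairwise disjoint variable sets (one class each), so they are linearly independent, and for $|C|\ge 2$ they span a space of valid equations of dimension at least two, again contradicting facetness. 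The main obstacle, and the step I would develop most carefully, is the combinatorial inequality $m_{c_0}\le|\mathcal{K}_z|$ together with the observation that on $\hat F$ at most one $Z$-vertex is used; the rest is bookkeeping with $\phi_c$.
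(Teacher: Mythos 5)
Your proof is correct and follows essentially the same route as the paper's: rule out $|S|\le|C|$ via domination by the cover inequalities, use the bound $\sum_{z\in Z}\sum_{c\in C}x_{z,c}\le 1$ on the face to force $x_{z,c}=0$ when $|\mathcal{K}_z|\le|S|-|C|$, and derive per-class equations on the face when condition~\ref{perfectness:separating-2} fails. Your only real departure is cosmetic but welcome: stating the per-class equations as $\sum_{v\in S}x_{v,c}-\beta\sum_{z\in Z}x_{z,c}=1$ (which genuinely hold on every point of $\hat F$) is cleaner than the paper's formulation, which drops the $Z$-term.
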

\begin{proof}
    Let $\hat{F}$ be the face of $\poly(G,k)$ induced by inequality~\eqref{ineq:multicolor-cut}.
    If~$|S|\leq |C|$, then inequality~\eqref{ineq:multicolor-cut} is dominated by the sum of inequality~\eqref{ineq:cover} for all $v \in S$, and so $\hat F$ is not a facet of $\poly(G,k)$.
    In the remainder of this proof, we shall use that
    $\sum_{z\in Z} \sum_{c \in C} x_{z,c} \leq 1$ for all $x \in \hat F$ since $|S|>|C|$, and $\sum_{v\in S} \sum_{c \in C} x_{v,c} \leq |S|$ is valid for $\poly(G,k)$.
    
    Consider  a vector $\bar x \in \hat F \cap \{0,1\}^{nk}$.
    Suppose now that $|S| > |C|$, and there exists a vertex $u \in Z$ such that $|\mathcal{K}_u| \leq |S|-|C|$.
    If $\bar x_{u,t}=1$ for some $t \in C$, then $\bar x_{z,c}=0$ for all $z  \in Z\setminus\{u\}$ and $c \in C$.
    Hence each class $c \in C\setminus \{t\}$ contains at most one vertex of $S$, and so $\sum_{v\in S} \sum_{c \in C\setminus\{t\}} \bar x_{v,c} \leq |C|-1$.
    This implies $\sum_{v\in S} \bar x_{v,t} \geq |S|-|C|+1$, a contradiction to $|\mathcal{K}_u| \leq |S|-|C|$.
    Hence it holds that $x_{u,t}=0$ for all $x \in \hat F$, and $\hat F$ is not a facet.

    Suppose now that $(C,Z,S)$ satisfies all conditions for perfectness but~\ref{perfectness:separating-2}.
    This implies $|C|\ge 2$, and $|\mathcal{K}_z|=|S|-|C|+1$ for all $z \in Z$.
If a vertex in $Z$ is assigned a class $t \in C$, then every class in $C\setminus\{t\}$ contains precisely one vertex of $S$, and class $t$ contains precisely $|S|-|C|+1$ vertices of $S$.
    In other words, it holds that $\sum_{v\in S} \bar x_{v,t} = |S|-|C|+1$, and $\sum_{v\in S} \bar x_{v,c} = 1$ for all $c \in C\setminus\{t\}$.
    If no vertex in $Z$ is assigned a class in $C$, then every class in $C$ must contain exactly one vertex of S, and so $\sum_{v \in S} \bar x_{v,c} = 1$ for all $c \in C$.
    The discussion above shows that, for every $t \in C$,  $\bar x$ satisfies the following equations:
    \begin{align*}
        \sum_{v \in S} \bar x_{v,c} =1 & \text{ for all } c \in C\setminus \{t\}. 
    \end{align*}
Therefore, these equations hold for every point in $\hat F$, and so it is not a facet of~$\poly(G,k)$.
 \end{proof}

\begin{theorem}
    Inequality~\eqref{ineq:multicolor-cut} induces a facet of $\poly(G,k)$ if and only if $(C,S,Z)$ is perfect.
\end{theorem}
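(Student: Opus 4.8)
The forward direction is exactly Lemma~\ref{lemma:multicolor-cut-facetness}, so the plan is to prove the reverse implication: if $(C,S,Z)$ is perfect, then the face $\hat F$ induced by~\eqref{ineq:multicolor-cut} is a facet. I would follow the technique described before Theorem~\ref{thm:gen-connectiviy:facetness}: write $\hat F = \{x \in \poly(G,k) : \hat\lambda^T x = \hat\lambda_0\}$ with $\hat\lambda_{v,c}=1$ for $v\in S,\,c\in C$, $\hat\lambda_{z,c}=-\beta$ for $z\in Z,\,c\in C$, $\hat\lambda=0$ elsewhere, and $\hat\lambda_0=|C|$; assume $\hat F \subseteq F = \{x : \lambda^T x = \lambda_0\}$; and show $\lambda = a\hat\lambda$ and $\lambda_0 = a\hat\lambda_0$ for some scalar $a$. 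The first step is to understand the integral points of $\hat F$. A short counting argument refining the proof of Proposition~\ref{prop:multicolor-cut-validity} shows that, since $\beta=|S|-|C|>0$ by~\ref{perfectness:size}, a connected $k$-subpartition is tight in exactly two ways: (Type~A) no vertex of $Z$ lies in a $C$-class and each $C$-class contains exactly one vertex of $S$; or (Type~B) exactly one vertex $u\in Z$ lies in a $C$-class, say class $t$, all of $S$ is covered by $C$-classes, class $t$ holds $|S|-|C|+1$ of them (necessarily from distinct components in $\mathcal{K}_u$) while every other $C$-class holds exactly one. Condition~\ref{perfectness:separating} is precisely what makes a Type~B point available for every $u\in Z$, and~\ref{perfectness:size} leaves vertices of $S$ unassigned in Type~A points.

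Next I would kill all coefficients outside the support of $\hat\lambda$ by differencing tight points. For $b\in[k]\setminus C$ and any $v$: start from a Type~A point in which $v$ is unassigned (possible since $|S|>|C|$ lets us avoid $v$ among the $|C|$ chosen representatives) and add $v$ as a singleton to the empty class $b$; tightness is preserved and $\lambda_{v,b}=0$ follows. For $v\notin S\cup Z$ and $c\in C$, I would grow class $c$ one vertex at a time through the component $K$ of $G-Z$ containing $v$: if $K$ has an $S$-vertex $s_K$, begin from a Type~A point with class $c=\{s_K\}$; if $K$ is $S$-free, begin instead from a Type~B point whose bridging vertex $z_0\in Z$ is adjacent to $K$ (such a $z_0$ exists because $G$ is connected, and it admits a Type~B point by~\ref{perfectness:separating}), putting class $c$ in the bridging role. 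In both cases the added vertices lie in $K$, so the left-hand side of~\eqref{ineq:multicolor-cut} is unchanged, each intermediate point stays in $\hat F$, and telescoping gives $\lambda_{v,c}=0$.

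With the support reduced to $S\cup Z$ restricted to $C$, I would establish the numerical relations. Swapping the single $S$-vertex of a fixed class $c$ between two Type~A points gives $\lambda_{s,c}=\lambda_{s',c}=:a_c$ for all $s,s'\in S$. Comparing a Type~A point with a Type~B point that bridges a given $z\in Z$ into class $t$ yields, after cancelling common terms, $\lambda_{z,t}+(|S|-|C|)a_t=0$, i.e. $\lambda_{z,t}=-\beta a_t$ for every $z\in Z$ and $t\in C$. It remains to show the $a_c$ coincide, and here~\ref{perfectness:separating-2} is indispensable when $|C|\ge2$: the extra component guaranteed by $|\mathcal{K}_u|\ge|S|-|C|+2$ lets me build a tight point $P'$ in which the bridging class $t$ carries $|S|-|C|+2$ vertices of $S$ while one other class $c'$ is left empty. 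Subtracting the Type~B point where $t$ carries $|S|-|C|+1$ and $c'$ carries one collapses to $a_t-a_{c'}=0$; since $u$ can be routed to any class and any $c'\neq t$ can be emptied, all $a_c$ equal a common value $a$. Evaluating $\lambda^T$ on any Type~A point then gives $\lambda_0=|C|a=a\hat\lambda_0$, so $\lambda=a\hat\lambda$ and $\hat F$ is a facet.

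The main obstacle is the cross-class equality $a_c=a_{c'}$. Swapping $S$-vertices within or between classes only reshuffles equal coefficients and produces trivial identities; the only way I see to genuinely link two classes is to change how many $S$-vertices a class carries, which forces an empty class elsewhere and is feasible exactly when some $Z$-vertex touches one more $S$-component than the bound in~\ref{perfectness:separating} --- that is, condition~\ref{perfectness:separating-2}. A secondary technical point is the treatment of $S$-free components of $G-Z$ in the vanishing step: these cannot be reached from a Type~A point and must be accessed through a bridging Type~B configuration, and verifying that such a bridge always exists (via connectivity of $G$ together with~\ref{perfectness:separating}) is where I would be most careful.
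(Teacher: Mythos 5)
Your proposal is correct and follows essentially the same route as the paper's proof: zero out the coefficients outside the support by growing a $C$-class vertex-by-vertex through the components of $G-Z$ (reaching $S$-free components via a bridging vertex of $Z$, which is exactly where condition~(ii) of perfectness enters), equate the $S$-coefficients within a class by swapping representatives, link distinct classes through the extra component guaranteed by condition~(iii), and recover the $Z$-coefficients by comparing bridged and unbridged tight points. The only blemish is your claim that tight integral points come in \emph{exactly} two types, which your own point $P'$ (a bridging class carrying $|S|-|C|+2$ vertices of $S$ alongside an empty class) contradicts; this is harmless because the argument only ever needs the specific tight points you construct, never the completeness of that classification.
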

\begin{proof}

    Lemma~\ref{lemma:multicolor-cut-facetness} proves that perfectness is a necessary condition for~\eqref{ineq:multicolor-cut} to induce a facet of~$\poly(G,k)$.
    The remainder of this proof is devoted to showing that perfectness is also sufficient.
    
    Let $\hat{F} = \{ x\in \poly(G,k) : \hat{\lambda}^{T}x = \hat{\lambda}_0 \}$ be a face of $\poly(G,k)$ where $(\hat{\lambda}^{T}, \hat{\lambda}_0)$ corresponds to inequality~\eqref{ineq:multicolor-cut}.
    Suppose that $F= \{ x\in \poly(G,k) : \lambda^T x = \lambda_0 \}$ is a face of $\poly(G,k)$ which contains~$\hat{F}$.
    To compute the zero coefficients associated with classes not in $C$, for each~$v \in V(G)$, consider a set $S_v \subseteq S\setminus \{v\}$ with $|S_v|=|C|$ and any bijection $\pi_v \colon S_v \to C$.
    Note that, for each $t \in [k]\setminus C$, the vectors $e(v,t)+\sum_{u \in S_v} e(u, \pi_v(u))$ and $\sum_{u \in S_v} e(u, \pi_v(u))$ belong to $\hat F$ as they correspond to connected $k$-subpartitions where every class in $C$ contains exactly one vertex of~$S$.
    This implies $\lambda_{v,t} = 0$ for all $v \in V(G)$ and $t \in [k]\setminus C$.

    Let $\mathcal{K}' = \{K \in \mathcal{K} : S\cap V(K) \neq \emptyset\}$. 
    For each $v \in S$, let $\mathcal{K}_v \subset \mathcal{K}'$ such that  $|\mathcal{K}_v|=|C|-1$ and $v \notin V(K)$ for all $K \in \mathcal{K}_v$ 
    (it exists since $|\mathcal{K}'|=|S|>|C|$).
    Moreover, let $W_v$ be the component of $G-Z$ which contains $v$.
    Consider~$t \in C$, and any bijection $\pi_v \colon \mathcal{K}_v \to C\setminus \{t\}$. 
As $W_v$ is connected, there is a sequence $G_1, \ldots, G_{|W_v|}$ of connected subgraphs of $W_v$ such that $v$ is the single vertex of~$G_1$, and $G_p = G_{p+1}-v_{p+1}$  where $v_{p+1} \in V(G_{p+1})$ for each $p \in \left [|W_v|-1 \right]$. 
    For every $p \in [|W_v|-1]$, the vectors
\[e(G_p, t) + \sum_{K \in \mathcal{K}_v} e(K, \pi_v(K)) \quad \text{and} \quad e(G_{p+1}, t) + \sum_{K \in \mathcal{K}_v} e(K, \pi_v(K))\] 
    belong to $\hat F$ since these vectors induce connected $k$-subpartitions where each class in $C\setminus\{t\}$ corresponds to a component in $\mathcal{K}_v$, and so it contains exactly one vertex of $S$.
Thus $\lambda_{u,t}=0$ for all $u \in V(K)\setminus S$ with $K \in \mathcal{K}'$, and $t \in C$.

    Suppose that $G-Z$ has a component $K$ such that $V(K)\cap S=\emptyset$.
    As $G$ is connected, there exist vertices $z \in Z$ and $z' \in N(z)\cap V(K)$.
Let $G_1, \ldots, G_{|V(K)|}$ be a sequence of connected subgraphs of $K$ such that~$z'$ is the single vertex of~$G_1$, and $G_p = G_{p+1}-v_{p+1}$  where $v_{p+1} \in V(G_{p+1})$ for each $p \in [|V(K)|-1]$. 
    Let $\{W_i\}_{i \in [q]}$ be a collection of $q = |S|-|C|+1$ distinct components in $\mathcal{K}_z$.
    Such a collection exists because of property~\ref{perfectness:separating} of perfectness.
    It follows from this construction that the graph $H_0$ induced by $\bigcup_{i \in [q]} V(W_i) \cup \{z\}$ is connected.
    Observe now that, for each $p \in [|V(K)|]$, the subgraph $H_p$ of $G$ induced by $V(H_0) \cup  V(G_p)$ is also connected.
    We next enumerate vectors corresponding to connected subpartitions that contain the vertices of $H_p$ with $p \in [|V(K)|]\cup \{0\}$ as one of their classes.

    Let $\mathcal{K}'' = \mathcal{K'}\setminus \{W_i\}_{i \in [q]}$ and note that $|\mathcal{K}''|=|C|-1$.
For every~$t \in C$ and $p \in \{0,1, \ldots, |V(K)|\}$, one can verify that the vector \[e(H_p, t) + \sum_{K \in \mathcal{K}''} e(K, \sigma_{t}(K))\]
belongs to $\hat F$, where $\sigma_{t} \colon \mathcal{K}'' \to C \setminus\{t\}$ is a bijection.
    Hence, for every component~$K$ of $G-Z$ such that $V(K)\cap S = \emptyset$, we have $\lambda_{u,t}=0$ for every $u \in V(K)$ and $t \in C$.

    We next compute the non-zero coefficients of the inequality $(\lambda,\lambda_0)$.
    Consider $u,v \in S$ with $u\neq v$, and $t \in C$.
    Since $|S|>|C|$, there is a set $S' \subseteq S\setminus \{u,v\}$ such that $|S'| = |C|-1$.
    Let $\pi \colon S' \to C\setminus \{t\}$ be a bijection.
    The vectors $e(u, t) + \sum_{w \in S'} e(w,\pi(w))$ and $e(v, t) +  \sum_{w \in S'} e(w,\pi(w))$ are in $\hat F$ as precisely~$|C|$ vertices of $S$ are assigned to singleton classes indexed by $C$.
    Hence, for each $v \in S$ and $t \in C$, it holds that $\lambda_{v,t} = \lambda_t$ for some~$\lambda_t \in \R$.

We next prove that the coefficients $\lambda^{}_t=\lambda_{\hat t}$ for all $t, t' \in C$ if $|C|\ge 2$.
    By~\ref{perfectness:separating-2}, there exists $z \in Z$ with $|\mathcal{K}_z|\geq |S|-|C|+2$.
    Let us fix $W \in \mathcal{K}_z$, and define $\mathcal{K}'_z \subseteq \mathcal{K}_z \setminus \{W\}$ with $|\mathcal{K}'_z|=|S|-|C|+1$, and let $\mathcal{K}'' = \mathcal{K}'\setminus (\mathcal{K}'_z \cup \{W\})$.
    Note that $|\mathcal{K}''| = |S| - (|S|-|C|+2) = |C|-2$.
    Consider classes $t, t' \in C$ such that~$t \neq t'$, and any bijection $\pi \colon \mathcal{K}'' \to C \setminus\{t, t'\}$.
    Since $\bigcup_{K \in \mathcal{K}'_z} V(K) \cup V(W) \cup \{z\}$ induces a connected graph, the vectors $e'+e(W, t)$ and $e' +e(W,  t')$ belong to $\hat F$, where 
    \[e' = e(z,t) + \sum_{K \in \mathcal{K}'_z} e(K,t) + \sum_{K \in \mathcal{K}''} e(K,\pi(K)).\]
    Consequently, it holds that $\lambda^{}_{t}=\lambda_{ t'} = \lambda_{*}$ for all $t,  t' \in C$.

Consider now a vertex $z  \in Z$,  and a class $t \in C$. 
    Let $\mathcal{K}'_z \subseteq \mathcal{K}^{}_z$ with $|\mathcal{K}'_z | = |S|-|C|+1$, and 
    let $\mathcal{K}'' = \mathcal{K'}\setminus \mathcal{K}'_z$.
Note that $|\mathcal{K}''|=|C|-1$, and consider any bijection  $\pi \colon \mathcal{K}'' \to C\setminus \{t\}$. 
    We define the vector 
    \[ e' = e(z,t) + \sum_{K \in \mathcal{K}'_z} e(K,t) + \sum_{K \in \mathcal{K}''} e(K, \pi(K)).\]
    Since $\left (\bigcup_{K \in \mathcal{K}'_z} V(K)\right) \cup \{z\}$ induces a connected subgraph of $G$,  $e'$ belongs to~$\hat F$.
Observe now that, for each~$K \in \mathcal{K}'_z$, the vertices of $K$ and the vertices of the components in  $\mathcal{K}''$ induce a connected subpartition of $G$ into $|C|$ classes. So  we have
    \[\lambda^T \left (e(K,t) + \sum_{K \in \mathcal{K}''} e(K, \pi(K)) \right) = \lambda_0,\] 
    and since $\lambda^T  e(K,t)=\lambda_*$, we have $\lambda^T \left ( \sum_{K \in \mathcal{K}''} e(K, \pi(K)) \right) = \lambda_0 - \lambda_*$. 
       As a consequence, it holds that
    \[\lambda^T e' = \lambda_{z,t} + (|\mathcal{K}'_z|-1) \lambda_* + \lambda_0  = \lambda_0.\]
    Therefore, we have $\lambda_{z,t} = -(|S|-|C|)\lambda_*$ for all $z \in Z$ and $t \in C$.
    It follows that $\lambda = \lambda_* \hat \lambda$, and thus inequality~\eqref{ineq:multicolor-cut} induces a facet of $\poly(G,k)$.
 \end{proof}

\subsection{Pairing inequalities}\label{subsec:pair}

We next introduce inequalities that also generalize~\eqref{ineq:cut} to multiple classes.
Differently from~\eqref{ineq:multicolor-cut}, each class in these inequalities is associated with a pair of non-adjacent vertices, and assigning one pair of vertices to a certain class may prevent other vertices from being assigned to another class.
Before presenting a formal definition, let us briefly give some intuition using the following simple example.

Let~$k \in \Z$ with $k\ge 2$, let $a,b \in [k]$ with $a\neq b$, and let $P=v_1\ldots v_5$ be a path.
First note that $x_{v_1,a}+x_{v_2,b}+x_{v_4,b}+x_{v_5,a} - x_{v_3,a} - x_{v_3,b} \leq 2$ is a valid inequality for $\poly(P,k)$ since it is simply the sum of inequality~\eqref{ineq:cut} for vertices $\{v_1,v_5\}$ in class $a$, $\{v_2,v_4\}$ in class $b$, and $\{v_3\}$ as a separator of both pairs.
Moreover, it is tight as one could assign $v_1$ to class $a$, $v_2$ to class $b$, and leave the remaining vertices unassigned.
The inequality $x_{v_1,a}+x_{v_2,b}+x_{v_4,b}+x_{v_5,a} - x_{v_3,b} \leq 2$ dominates the previous one, and it is also valid for  $\poly(P,k)$.
The validity of the last inequality follows from the observation that $v_2$ and $v_4$ cannot be assigned to class $b$ if  $v_1$ and $v_5$ are assigned to class~$a$.

We next define inequalities based on the idea that assigning some vertices to a certain class may prevent other vertices from being assigned to another class, as illustrated in the previous example.

Let $C \subseteq [k]$ be a set of classes.
For each $c \in C$, let $u_c, v_c \in V(G)$ such that $u_c\neq v_c$ and $\{u_c,v_c\} \notin E(G)$.
The vertices $u_c$ and $v_c$ are called \emph{delegates} of class~$c$.

Consider now a set of vertices $Z \subseteq V(G)\setminus (\bigcup_{c \in C} \{u_c,v_c\})$ such that $Z$ is a $u_c,v_c$-separator for every $c \in C$.
Given $c \in C$, the pivot  vertex of a simple path $P$ between $u_c$ and~$v_c$ (i.e., a $u_c,v_c$-path) is an arbitrary vertex in $V(P)\cap Z$.
Let us fix a set of pivots in $Z$ for every~$u_c,v_c$-path with $c \in C$.
Given $z \in Z$, $c \in C$, and $b \in C\setminus \{c\}$, we say that $c$ \emph{blocks} $b$ \emph{via} $z$ if every  $u_c,v_c$-path in $G$ containing $z$ 
as its pivot in $Z$
also contains $u_b$ and $v_b$.
For every $z \in Z$ and $c \in C$, we define~$\gamma_{z,c} = 0$ if $c$ blocks $b$ via $z$ for some $b \in C\setminus \{c\}$, or if there is no $u_c,v_c$-path in~$G$ containing~$z$ as its 
pivot in $Z$. 
Otherwise, we define $\gamma_{z,c} = 1$. 

In the example above, $C=\{a,b\}$, $u_a=v_1$, $v_a=v_5$, $u_b=v_2$, $v_b=v_4$, $Z=\{v_3\}$ and class $a$ blocks class $b$ via $z=v_3$ since the unique path connecting $v_1$ and $v_5$ and containing $v_3$ also contains $v_2$ and $v_4$. Consequently, $\gamma_{v_3,a}=0$ and $\gamma_{v_3,b}=1$. Now suppose that we add a new vertex $v_6$ and two edges $\{v_1,v_6\}$  and $\{v_5,v_6\}$ converting the path to a cycle. With the same choice of classes and delegates, the set $Z=\{v_3,v_6\}$ is a separator for both pairs $v_1,v_5$ and $v_2,v_4$. In this setting, $a$ blocks $b$ via $v_3$ and $b$ blocks $a$ via $v_6$.
Hence the following inequality holds for this example: $x_{v_1,a}+x_{v_2,b}+x_{v_4,b}+x_{v_5,a} - x_{v_3,b} - x_{v_6,a} \leq 2$.

\begin{proposition}
Let $C \subseteq [k]$ with $|C|\ge 1$, and let $\{\{u_c, v_c\}\}_{c\in C}$ be a collection of pairs of vertices in $V(G)$ such that $u_c\neq v_c$ and $\{u_c,v_c\} \notin E(G)$ for all $c \in C$.
Let $Z \subseteq V(G)\setminus (\bigcup_{c \in C} \{u_c,v_c\})$ such that $Z$ is a separator of $u_c$ and $v_c$, for every $c \in C$, and consider a set of pivots in $Z$ for every $u_c,v_c$-path with $c \in C$. 
Finally, let $\gamma$ be defined accordingly.
The following inequality is valid for $\poly(G,k)$.
    \begin{equation}\label{ineq:blocking}
        \sum_{c\in C} \left (x_{v_c,c} + x_{u_c,c} - \sum_{z\in Z} \gamma_{z,c} \:x_{z,c} \right) \leq |C|.
    \end{equation}
\end{proposition}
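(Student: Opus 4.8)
The plan is to verify the inequality directly on every integral point of $\poly(G,k)$, i.e., on the incidence vector $\bar x=\chi(\mathcal{V})$ of an arbitrary connected $k$-subpartition $\mathcal{V}=\{V_i\}_{i\in[k]}$. For each $c\in C$ I would write
\[
f(c)=\bar x_{u_c,c}+\bar x_{v_c,c}-\sum_{z\in Z}\gamma_{z,c}\,\bar x_{z,c},
\]
so that the left-hand side equals $\sum_{c\in C}f(c)$ and the goal becomes $\sum_{c\in C}\bigl(f(c)-1\bigr)\le 0$. Since the subtracted sum is non-negative and $\gamma_{z,c}\in\{0,1\}$, we always have $f(c)\le 2$; moreover, whenever class $c$ fails to contain \emph{both} of its delegates we immediately get $f(c)\le 1$. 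Hence only the set $A=\{c\in C: u_c,v_c\in V_c\}$ of classes containing both delegates can produce a surplus $f(c)-1>0$, and the whole difficulty lies in offsetting these surpluses.

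Next I would exploit connectivity. Fix $c\in A$. Because $u_c$ and $v_c$ are non-adjacent and $G[V_c]$ is connected, there is a $u_c,v_c$-path $P_c$ with $V(P_c)\subseteq V_c$; let $z_c$ be the pivot assigned to $P_c$, so $z_c\in V(P_c)\cap Z\subseteq V_c$ and thus $\bar x_{z_c,c}=1$. I then split on $\gamma_{z_c,c}$. If $\gamma_{z_c,c}=1$, the subtracted sum in $f(c)$ is at least $1$, giving $f(c)\le 1$. If $\gamma_{z_c,c}=0$, then—since $P_c$ itself witnesses that some $u_c,v_c$-path has pivot $z_c$—the only remaining reason is that $c$ \emph{blocks} some $b\in C\setminus\{c\}$ via $z_c$; the defining property of blocking forces $u_b,v_b\in V(P_c)\subseteq V_c$, whence $\bar x_{u_b,b}=\bar x_{v_b,b}=0$ and therefore $f(b)\le 0$.

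The heart of the argument is a charging scheme: to each $c\in A$ falling into the second case assign one blocked class $b(c)$, whose bound $f(b(c))\le 0$ pays for the surplus of $c$. I expect the main obstacle to be showing that this assignment is injective and lands outside $A$, so that no class is charged twice and no surplus-bearing class is used as a payer. Both facts follow from the pairwise disjointness of the classes together with the inclusion $u_{b(c)},v_{b(c)}\in V_c$ established above: if $b(c_1)=b(c_2)=b$ for distinct $c_1,c_2$, then $u_b$ would lie in both $V_{c_1}$ and $V_{c_2}$; and if $b(c)\in A$, then $u_{b(c)}$ would lie in both $V_c$ and $V_{b(c)}$—each impossible. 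Let $A_2\subseteq A$ collect the classes for which the dichotomy yields $\gamma_{z_c,c}=0$, and set $B=\{b(c):c\in A_2\}$. Injectivity gives $|B|=|A_2|$ and $B\cap A=\emptyset$; every class in $A\setminus A_2$ and every class outside $A\cup B$ satisfies $f(\cdot)\le 1$, while
\[
\sum_{c\in A_2}\bigl(f(c)-1\bigr)+\sum_{b\in B}\bigl(f(b)-1\bigr)\le |A_2|\cdot 1+|B|\cdot(-1)=0.
\]
Summing the per-class bounds over all of $C$ then yields $\sum_{c\in C}\bigl(f(c)-1\bigr)\le 0$, which is exactly inequality~\eqref{ineq:blocking}.
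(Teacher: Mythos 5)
Your proof is correct and follows essentially the same route as the paper's: both identify the classes that could contribute a surplus of $2$ (both delegates present and the pivot of a witnessing $u_c,v_c$-path having $\gamma=0$), use the definition of blocking to locate a distinct class whose delegates are absorbed by the blocker and hence contributes at most $0$, and invoke pairwise disjointness of the classes for the injectivity of this charging. The only difference is cosmetic: the paper first replaces each class by a delegate-to-delegate path before reasoning about pivots, whereas you select such a path inside each class directly, which slightly streamlines the argument and makes the injectivity of the charging more explicit.
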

\begin{proof}
    We assume that $|C|\ge 2$, otherwise $C = \{c\}$ for some $c \in [k]$, and so~\eqref{ineq:blocking} is precisely inequality~\eqref{ineq:cut}.
Suppose first that, for every $c \in C$ and $u_c,v_c$-path $P$, it holds that $\gamma_{z,c}=1$ for the 
    pivot vertex $z$ of $P$.
In this case, \eqref{ineq:blocking} is trivially valid for $\poly(G,k)$ as each class in $C$ contributes with at most $1$ to the left-hand side of the inequality.

    Consider now a vector $\hat x \in \poly(G,k) \cap \{0,1\}^{nk}$, and let $\{V'_i\}_{i \in [k]}$ be the connected $k$-subpartition of $G$ corresponding to $\hat x$.
    We define a $k$-subpartition $\mathcal{W}= \{V_i\}_{i \in [k]}$ of $G$ satisfying, for each $c \in C$:
    \begin{itemize}
        \item $V_c = V'_c\cap \{u_c,v_c\}$ if $|V'_c\cap\{u_c,v_c\}| \leq 1$, and
        \item $V_c$ is the set of vertices of any $u_c,v_c$-path in $G[V'_c]$ otherwise.
    \end{itemize}
    Moreover, define $V_i=\emptyset$ for all $i \in [k]\setminus C$.
    It is clear from this definition that $\mathcal{W}$ is a connected $k$-subpartition of $G$, and that the value of the left-hand side of~\eqref{ineq:blocking} given by $\bar x:= \chi(\mathcal{W})$ is at least $\sum_{c\in C} \left (\hat x_{v_c,c} + \hat x_{u_c,c} - \sum_{z\in Z} \gamma_{z,c} \:\hat x_{z,c} \right)$, that is, the same given by $\hat x$.

    Suppose there exist $z \in Z$ and~$a \in C$ such that $\gamma_{z, a} = 0$, $V_{a} \supseteq \{u_{a},v_{a},{z}\}$, 
    and $z$ is the pivot of $G[V_a]$.
    It follows from the definition of~$\bar x$ and~$\gamma$ that $V_{a} \supseteq \{u_b,v_b\}$ for some $b \in C\setminus \{a\}$, and so $V_b \cap \{u_b, v_b\} =\emptyset $.
    Since the classes in $\mathcal{W}$ are pairwise disjoint, and each non-empty class induces a path, 
    we conclude that the cardinality of 
$\{c \in C  : V_{c} \supseteq\{u_c,v_c\}, \text{ and } \gamma_{z,c}=0 \text{ for all } z\in Z\cap V_c\}$ 
    is at most the size\textbf{} of $\{c \in C : V_c \cap \{u_c,v_c\} = \emptyset\}$.
    Thus $\bar x$ satisfies~\eqref{ineq:blocking}, so does~$\hat x$.
    Therefore, the inequality is valid for $\poly(G,k)$.
 \end{proof}

Figure~\ref{subfig:blocking:1} shows a graph where $\{v_1,v_2\}$ is a separator, $\{v_3, v_5\}$ and $\{v_4,v_5\}$ are the  delegates of classes $a$ (green)  and $b$ (yellow), respectively, with $u_a = v_3$ and $u_b=v_4$.
Since every (simple) path between delegates of same class contains exactly one vertex of $Z$, the choice of the pivots is trivial. 
The corresponding inequality~\eqref{ineq:blocking} is $x_{v_3,a}+x_{v_5,a}+x_{v_4,b}+x_{v_5,b}-x_{v_1,b}-x_{v_2,b} \leq 2$. 
Figure~\ref{subfig:blocking:2} illustrates the same graph with $\{v_1,v_4\}$ as the separator, $\{v_3,v_5\}$ and $\{v_2,v_3\}$ are the  delegates of classes $a$  and $b$, respectively, with $u_a=u_b=v_3$.
Consider first $v_1$ as the pivot of the paths $v_3v_4v_1v_5$ and $v_3v_4v_1v_5v_2$, and $v_4$ as the pivot of $v_3v_4v_2v_5$ and  $v_3v_4v_2$.
In this case, the corresponding inequality~\eqref{ineq:blocking} is $x_{v_3,a}+x_{v_5,a}+x_{v_2,b}+x_{v_3,b}-x_{v_1,a}-x_{v_4,b} \leq 2$. 
If one chooses $v_4$ as the pivot of every path linking delegates of  the same class, then the obtained inequality is 
$x_{v_3,a}+x_{v_5,a}+x_{v_2,b}+x_{v_3,b}-x_{v_4,a}-x_{v_4,b} \leq 2$, which is implied by~\eqref{ineq:cut} as $v_4$ is a cut-vertex.

As we shall prove later in Theorem~\ref{thm:blocking:facetness}, 
the inequalities in Figure~\ref{fig:example:blocking} are facet-defining.
Notice that, in contrast to the case $|C|=1$ (i.e., inequalities~\eqref{ineq:cut}), the minimality of the separator is no longer a necessary condition for facetness when multiple classes ($|C|\ge2$) are considered.

\begin{figure}[t!]
\centering
\begin{subfigure}{.47\linewidth}
    \centering
\begin{tikzpicture}[
auto,
            node distance = 1.5cm, semithick, scale=0.8, 
]
\tikzstyle{every state}=[
            draw = black,
            semithick,
minimum size = 4mm,
            scale=0.9,
            inner sep = 1.5pt
        ]

\node[state] (v4) [fill=yellow] {$v_4$};
        \node[state] (v1) [fill=yellow,above right of=v4] {$v_1$};
        \node[state] (v2) [fill=yellow,below right of=v4] {$v_2$};
        \node[state] (v3) [below left of = v4, fill=green] {$v_3$};
\node[state] (v5) [below right of=v1, semifill={upper=green, lower=yellow, ang=90}] {$v_5$};

        \node[draw,dotted,fit=(v1) (v2)] {};

        \path (v4) edge node{} (v1);
        \path (v4) edge node{} (v2);
        \path (v4) edge node{} (v3);
        \path (v5) edge node{} (v1);
        \path (v5) edge node{} (v2);
        
    \end{tikzpicture}
    \caption{$x_{v_3,a}+x_{v_5,a}+x_{v_4,b}+x_{v_5,b}-x_{v_1,b}-x_{v_2,b} \leq 2$
}
    \label{subfig:blocking:1}

\end{subfigure}\hfill
\begin{subfigure}{.47\linewidth}
    \centering
    \begin{tikzpicture}[
auto,
            node distance = 1.5cm, semithick, scale=0.8, 
]
\tikzstyle{every state}=[
            draw = black,
            thick,
minimum size = 4mm,
            scale=0.9,
            inner sep=1.5pt
        ]

\node[state] (v4) [fill=yellow] {$v_4$};
        \node[state] (v1) [fill=green,above right of=v4] {$v_1$};
        \node[state] (v2) [fill=yellow,below right of=v4] {$v_2$};
        \node[state] (v3) [below left of = v4, semifill={upper=green, lower=yellow, ang=90}] {$v_3$};
\node[state] (v5) [below right of=v1, fill=green] {$v_5$};

        \node[draw, rotate fit=45, minimum size=8mm,  inner sep= 2mm, dotted, fit=(v1) (v4)] {};

        \path (v4) edge node{} (v1);
        \path (v4) edge node{} (v2);
        \path (v4) edge node{} (v3);
        \path (v5) edge node{} (v1);
        \path (v5) edge node{} (v2);
        
    \end{tikzpicture}
    \caption{$x_{v_3,a}+x_{v_5,a}+x_{v_2,b}+x_{v_3,b}-x_{v_1,a}-x_{v_4,b} \leq 2$
}
    \label{subfig:blocking:2}
\end{subfigure}\hfill
    \caption{
    Examples of inequality~\eqref{ineq:blocking}.
Dotted rectangles represent the separators, minimal in \eqref{subfig:blocking:1} and not minimal in \eqref{subfig:blocking:2}. Green and yellow vertices represent the vertices of classes~$a$ and $b$, respectively, such that the corresponding variables have non-zero coefficients in~\eqref{ineq:blocking}.}
    \label{fig:example:blocking}
\end{figure}

We next define a class of connected $k$-subpartitions of $G$ whose corresponding vectors in $\poly(G,k)$ satisfy~\eqref{ineq:blocking} with equality.
A connected $k$-subpartition $\{V_i\}_{i \in [k]}$ of $G$ is said to be \emph{conforming} with respect to $(C,\{u_c,v_c\}_{c \in C}, Z, \gamma)$ if, for each $c \in C$, one of the following holds:
\begin{enumerate}
    \item $|V_c\cap \{u_c, v_c\}|=0$, $\gamma_{z,c}=0$ for all $z \in V_c \cap Z$, and there is $b\in C\setminus\{c\}$ such that $V_b \supseteq \{u_c,v_c,u_b,v_b\}$; or
    \item $|V_c\cap \{u_c, v_c\}|=1$, $\gamma_{z,c}=0$ for all $z \in V_c \cap Z$, and $V_c \not\supseteq \{u_b,v_b\} $ for all $b \in C\setminus\{c\}$; or
    \item $|V_c\cap \{u_c, v_c\}|=2$, $\gamma_{z,c}=1$ for exactly one $z \in V_c \cap Z$, and $V_c \not\supseteq \{u_b,v_b\} $ for all $b \in C\setminus\{c\}$; or
    \item $|V_c\cap \{u_c, v_c\}|=2$, $\gamma_{z,c}=0$ for all $z \in V_c \cap Z$, and $V_c \supseteq \{u_b,v_b\} $ for exactly one $b \in C\setminus\{c\}$.
\end{enumerate}

\begin{lemma}\label{lemma:conforming-equality}
    Let $\mathcal{V}=\{V_i\}_{i\in [k]}$ be a conforming connected $k$-subpartition of $G$.
    It holds that the incidence vector $\chi(\mathcal{V})$ satisfies~\eqref{ineq:blocking} with equality.
\end{lemma}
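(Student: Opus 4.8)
The plan is to evaluate the left-hand side of~\eqref{ineq:blocking} at $x=\chi(\mathcal{V})$ one class at a time and show that the total is exactly $|C|$. For each $c \in C$ we have $x_{u_c,c}+x_{v_c,c}=|V_c\cap\{u_c,v_c\}|$ and $\sum_{z\in Z}\gamma_{z,c}\,x_{z,c}=|\{z\in V_c\cap Z:\gamma_{z,c}=1\}|$, since $x_{w,c}=1$ exactly when $w\in V_c$. Hence the contribution of class $c$ to the left-hand side depends only on $V_c$, and I would read it off directly from the four cases in the definition of a conforming subpartition. A short calculation gives contribution $0$ in case~1, $1$ in cases~2 and~3, and $2$ in case~4; indeed the subtracted term $|\{z\in V_c\cap Z:\gamma_{z,c}=1\}|$ is forced to be $0$ in all cases except case~3, where it equals $1$.

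Writing $A$, $B$, $D$ for the sets of classes falling respectively in case~1, in cases~2 or~3, and in case~4, the total left-hand side equals $|B|+2|D|$. Every class lies in exactly one case, since the four cases are pairwise exclusive: they prescribe incompatible values of $|V_c\cap\{u_c,v_c\}|$ (namely $0$, $1$, $2$, $2$), and cases~3 and~4 are distinguished by whether some $z\in V_c\cap Z$ has $\gamma_{z,c}=1$ or all have $\gamma_{z,c}=0$. Therefore $|A|+|B|+|D|=|C|$, and it remains only to prove $|D|=|A|$, after which the total becomes $|B|+|D|+|A|=|C|$, as desired.

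The heart of the argument, and the step I expect to be the main obstacle, is to exhibit a bijection between $A$ and $D$, using pairwise disjointness of the classes throughout: a delegate lies in at most one class, so whenever a pair $\{u_b,v_b\}$ is contained in a common class that class is unique. Given $c\in D$, the set $V_c$ contains its own delegates together with exactly one foreign pair $\{u_b,v_b\}$; since $u_b,v_b\in V_c$ and the classes are disjoint, $V_b\cap\{u_b,v_b\}=\emptyset$, so $b\in A$ (case~1 is the only case with no delegate). This defines a map $c\mapsto b$ from $D$ to $A$. Conversely, if $b\in A$, then by case~1 there is a class $b'$ with $V_{b'}\supseteq\{u_b,v_b,u_{b'},v_{b'}\}$; this $b'$ contains both its own delegates and the foreign pair $\{u_b,v_b\}$, which rules out cases~1,~2 and~3 and places $b'$ in case~4, its unique foreign pair being $\{u_b,v_b\}$. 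I would then verify that these two assignments are mutually inverse, again invoking uniqueness of the class containing a given pair, which yields the bijection and hence $|D|=|A|$.

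Assembling the pieces, $\chi(\mathcal{V})$ makes the left-hand side of~\eqref{ineq:blocking} equal to $|B|+2|D|=|A|+|B|+|D|=|C|$, so $\chi(\mathcal{V})$ satisfies~\eqref{ineq:blocking} with equality. The only delicate points are the exclusivity of the four cases and the well-definedness and invertibility of the partner map, and both follow cleanly from disjointness of the classes.
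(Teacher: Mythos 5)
Your proof is correct and follows essentially the same route as the paper's: compute the per-class contribution to the left-hand side ($0$, $1$, $1$, $2$ for the four cases) and pair each case-4 class with the case-1 class whose delegates it absorbs. You are somewhat more explicit than the paper in verifying that this pairing is a genuine bijection (the paper only asserts the existence of the partner in each direction), which is a welcome bit of extra care but not a different argument.
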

\begin{proof}
    Consider a class $c \in C$.
    We say that $c$ is \emph{type}-$i$ with  $i \in  \{1,2,3,4\}$ if it satisfies case~$i$ in the definition of conforming.
    If~$c$ is type-2 or type-3, then~$c$ contributes with one to the left-hand side of inequality~\eqref{ineq:blocking}, and does not prevent any other class in $C$ from contributing to it.
    If~$c$ is type-4, then there is precisely one class $b \in C\setminus\{c\}$ such that $V_c \supseteq\{u_b,v_b\}$.
    In this case, classes $b$ and~$c$ together contribute  with exactly two to the left-hand side of~\eqref{ineq:blocking}.
    If~$c$ is type-1, then there is $b \in C\setminus\{c\}$ such that $V_b\supseteq\{u_c,v_c,u_b,v_b\}$, and so~$b$ is type-4.
    Again, $b$ and  $c$ contribute together with two to the left-hand side.
    Therefore, $\chi(\mathcal{V})$ satisfies inequality~\eqref{ineq:blocking} with equality.
 \end{proof}

If $|C|=1$, we recall that \eqref{ineq:blocking} is exactly~\eqref{ineq:cut} (where $c$ is the single class in $C$), and that this inequality defines a facet of $\poly(G,k)$ if and only if the separator is minimal, as shown in Corollary~\ref{cor:simple-facets}.
In the next theorem, we prove that~\eqref{ineq:blocking} is facet-defining when $|C|\ge2$ under some hypothesis on the existence of certain conforming connected subpartitions.
For every $c,b \in [k]$, we say that  $c$ \emph{blocks} $b$ if there exists $z \in Z$ such that $c$ blocks $b$ via $z$.

\begin{theorem}\label{thm:blocking:facetness}
    Inequality~\eqref{ineq:blocking} with $|C|\geq 2$ induces a facet of $\poly(G,k)$ if the following conditions hold:
    \begin{enumerate}[(i)]
\item For each $c \in [k]$ and $v \in V(G)$, there is a conforming connected $k$-subpartition $\{V_i\}_{i \in [k]}$ of $G$ such that $v \in V_c$.
         Moreover, if $v$ is a delegate of $c$, then $\bar v \notin V_i$ for all $i\in [k]$, where $\bar v$ is a delegate of $c$ different from~$v$.  \label{property:conforming2}
\item For each $c,b \in C$ such that $c$ blocks $b$ 
there is a conforming connected $k$-subpartition such that $V_c \supseteq \{u_b,v_b,u_c,v_c\}$. \label{property:blocking-class}
        \item The class-blocking  graph $B$ is connected, where $V(B)=C$ and $E(B)=\{\{a,b\} : a \text{ blocks } b\}$. \label{property:blocking-graph}

    \end{enumerate}
\end{theorem}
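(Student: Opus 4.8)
The plan is to follow the scheme used for Theorem~\ref{thm:gen-connectiviy:facetness} and the multiway-cut facet proof. Writing $(\hat\lambda,\hat\lambda_0)$ for inequality~\eqref{ineq:blocking} and $\hat F=\{x\in\poly(G,k):\hat\lambda^Tx=\hat\lambda_0\}$ for the face it induces, I would assume $F=\{x\in\poly(G,k):\lambda^Tx=\lambda_0\}$ is a face with $\hat F\subseteq F$ and prove that $(\lambda,\lambda_0)=a(\hat\lambda,\hat\lambda_0)$ for some $a\in\R$. The only points I use are incidence vectors $\chi(\mathcal{V})$ of conforming connected $k$-subpartitions, which lie on $\hat F\subseteq F$ by Lemma~\ref{lemma:conforming-equality}; each coefficient identity is then obtained by subtracting two such vectors that differ in a single controlled way. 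Recall that the nonzero entries of $\hat\lambda$ are $\hat\lambda_{u_c,c}=\hat\lambda_{v_c,c}=1$ and $\hat\lambda_{z,c}=-\gamma_{z,c}$ for $c\in C$ and $z\in Z$, with $\hat\lambda_0=|C|$; I call a pair $(v,c)$ \emph{participating} when $\hat\lambda_{v,c}\neq0$. The basic tool is a peeling observation: if $\mathcal{V}$ is conforming and $w\in V_c$ is non-participating and a leaf of a spanning tree of $G[V_c]$, then unassigning $w$ keeps $G[V_c]$ connected and preserves the type of $c$ in the definition of conforming (the at most two delegates and the unique relevant separator of $c$ are retained), so the result is again conforming; the two subpartitions differ by $e(w,c)$ and hence $\lambda_{w,c}=0$.

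First I would clear the zero coefficients. For $t\in[k]\setminus C$ and any $v$, take a conforming $\mathcal{V}$ in which every class of $C$ is the singleton type-2 class of a delegate $\neq v$, so $v$ is unassigned; since classes outside $C$ are unconstrained, adding $e(v,t)$ keeps the subpartition conforming, and comparing the two points gives $\lambda_{v,t}=0$. For $c\in C$ and non-participating $(v,c)$, condition~\ref{property:conforming2} supplies a conforming subpartition with $v\in V_c$; if $v$ is internal I first truncate $V_c$ to a single-delegate sub-path lying on the appropriate side of the relevant separator, turning $c$ into a type-2 class having $v$ as a leaf, and then remove $v$ by the peeling observation to conclude $\lambda_{v,c}=0$. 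Verifying that each truncated class is still type-2 (it drops the second delegate and the $\gamma=1$ separator, and can only lose enclosed delegate pairs) is the routine part here.

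Next I pin the nonzero coefficients down to a per-class scalar. For $\lambda_{u_c,c}=\lambda_{v_c,c}$ I invoke the ``moreover'' clause of condition~\ref{property:conforming2}: it yields a conforming $\mathcal{V}$ with $u_c\in V_c$ and $v_c$ unassigned; peeling $V_c$ to $\{u_c\}$ and then replacing $V_c=\{u_c\}$ by $V_c=\{v_c\}$ (legitimate since $v_c$ stays unassigned, and both are type-2) produces two points of $\hat F$ differing by $e(u_c,c)-e(v_c,c)$, so the two delegate coefficients coincide; call their common value $a_c$. For a separator $z$ with $\gamma_{z,c}=1$, condition~\ref{property:conforming2} applied to $(z,c)$ forces $c$ to be type-3, giving a conforming point whose class $c$ is a $u_c,v_c$-path through $z$; shrinking $V_c$ to the type-2 class $\{u_c\}$ while leaving all else fixed yields two points of $\hat F$, and subtracting their $\lambda$-values (all other vertices of the path being non-participating, hence of coefficient $0$) gives $\lambda_{v_c,c}+\lambda_{z,c}=0$, i.e.\ $\lambda_{z,c}=-a_c$. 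Thus within each class $(\lambda,\lambda_0)$ already agrees with $a_c\,\hat\lambda$.

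The crux is that all the scalars $a_c$ coincide, and this is exactly what conditions~\ref{property:blocking-class} and~\ref{property:blocking-graph} furnish. Whenever $c$ blocks $b$, condition~\ref{property:blocking-class} gives a conforming $\mathcal{V}_1$ in which $c$ is type-4 with $V_c\supseteq\{u_c,v_c,u_b,v_b\}$ and $b$ is the corresponding type-1 class; peeling every remaining class to a type-2 singleton, $\lambda^T\chi(\mathcal{V}_1)=2a_c+\sum_{d\neq c,b}a_d$ because $u_b,v_b$ and all Steiner vertices are non-participating for $c$ and $V_b$ carries no delegate of $b$. Comparing with the conforming $\mathcal{V}_2$ obtained by resetting $V_c=\{u_c\}$, $V_b=\{u_b\}$ (both type-2) and keeping the other classes identical, both points lie on $\hat F$ and subtraction gives $2a_c-(a_c+a_b)=0$, so $a_c=a_b$. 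As equality is symmetric and the class-blocking graph $B$ is connected by condition~\ref{property:blocking-graph}, all $a_c$ equal a common $a$; evaluating $\lambda$ at the all-type-2 point $\sum_{c\in C}e(u_c,c)\in\hat F$ then yields $\lambda_0=a|C|=a\hat\lambda_0$, whence $(\lambda,\lambda_0)=a(\hat\lambda,\hat\lambda_0)$ and $\hat F$ is a facet. I expect the main obstacle to be conceptual rather than computational: the propagation of the single scalar $a$ along $B$ is the heart of the argument and is precisely what condition~\ref{property:blocking-graph} is designed to enable, while the most delicate bookkeeping is checking that every auxiliary subpartition arising from peeling, truncation, and the type-4/type-2 exchange is genuinely conforming so that Lemma~\ref{lemma:conforming-equality} applies and all untouched classes remain valid.
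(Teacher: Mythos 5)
Your overall architecture is exactly the paper's: contain $\hat F$ in a generic face $F$, use only incidence vectors of conforming subpartitions (via Lemma~\ref{lemma:conforming-equality}) to force the coefficients, establish the zeros first, then the within-class identities $\lambda_{u_c,c}=\lambda_{v_c,c}=-\lambda_{z,c}$, and finally propagate a single scalar across classes using condition~\ref{property:blocking-class} for each blocking pair and the connectivity of $B$ from condition~\ref{property:blocking-graph}. The cross-class computation ($2a_c=a_c+a_b$ from the type-4/type-1 pair versus the two type-2 singletons) is the paper's computation, modulo the harmless caveat that you must choose representatives with $u_b\neq u_c$ when delegates of different classes coincide (which the paper handles with an explicit ``w.l.o.g.'').

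There is, however, one step that fails as written: your treatment of a non-participating vertex $v$ interior to a \emph{type-4} class $V_c$. You truncate $V_c$ to a single-delegate sub-path and declare the result type-2, but a type-4 class $V_c$ contains $\{u_b,v_b\}$ for some $b$ whose class is type-1 precisely because $V_c\supseteq\{u_b,v_b,u_c,v_c\}$. After truncation either the sub-path still contains both $u_b$ and $v_b$ (then it is not type-2, since type-2 requires $V_c\not\supseteq\{u_b,v_b\}$, and the point is not tight: class $c$ now contributes $1$ while class $b$ contributes $0$), or it drops one of them, in which case class $b$ becomes an orphaned type-1 class with no type-4 partner --- again not conforming and not on $\hat F$, so the comparison yielding $\lambda_{v,c}=0$ collapses. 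Your ``peeling observation'' has the same blind spot: $u_b$ and $v_b$ are non-participating for class $c$, yet peeling them off a type-4 class destroys conformity of both $c$ and $b$. The paper's repair is to simultaneously assign a freed delegate of $b$ to class $b$ (the term $e(u_b,b)$ in its vector $e'$), turning $b$ into a type-2 singleton and restoring tightness; you do perform exactly this repair in your cross-class step, but you omit it in the zero-coefficient step where it is equally necessary. The same caution applies to your optional ``peel every remaining class to a type-2 singleton'': collapsing some other type-4 class $V_{d'}$ orphans its type-1 partner $d$; it is safer (and sufficient) to leave the untouched classes alone, since their contribution cancels in the subtraction.
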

\begin{proof}

     Let $\hat{F} = \{ x\in \poly(G,k) : \hat{\lambda}^{T}x = \hat{\lambda}_0 \}$ be a face of $\poly(G,k)$ where $(\hat{\lambda}^{T}, \hat{\lambda}_0)$ corresponds to inequality~\eqref{ineq:blocking}.
    Suppose that $F= \{ x\in \poly(G,k) : \lambda^T x = \lambda_0 \}$ is a face of $\poly(G,k)$ which contains~$\hat{F}$.

    Throughout this proof, we implicitly use Lemma~\ref{lemma:conforming-equality}  to show vectors in $\hat F$ by constructing conforming connected subpartitions.
    Let us first show the zero entries of $\lambda$.
    Consider $c \in [k]$ and $v \in V(G)$.
    By~\ref{property:conforming2}, there is a conforming connected $k$-subpartition $\mathcal{V} = \{V_i\}_{i \in [k]}$ of $G$ such that~$v \in V_c$.
    Suppose  first that $c \in [k]\setminus C$.
    We may assume that $V_c=\{v\}$, otherwise we may simply remove all vertices from $V_c$ but~$v$. 
    Since $\mathcal{V}$ is conforming, the vectors 
\(\chi(\mathcal{V})\) and \(\chi(\mathcal{V})- e(v,c)\) belong to~$\hat F$.
    This implies $\lambda_{v,c}=0$ for every $v \in V(G)$, and~$c \in [k]\setminus C$.
    
    Suppose that $c \in C$ and  $v \in V(G)\setminus (D \cup Z)$, where $D = \bigcup_{c\in C}\{u_c,v_c\}$. 
If $|V_c\cap \{u_c,v_c\}|=0$, then assume with no loss of generality that $V_c=\{v\}$.
If $|V_c\cap \{u_c,v_c\}|=1$, then assume (w.l.o.g.) that $V_c$ induces a path with one endpoint at $v$ and the other at one of the delegates $u_c$ or $v_c$.
In both cases,  removing $v$ from $G[V_c]$ does not increase the number of components, and consequently
\(\chi(\mathcal{V})\) and \(\chi(\mathcal{V})- e(v,c)\) belong to~$\hat F$.
    Suppose now that $|V_c\cap \{u_c,v_c\}|=2$, and assume (w.l.o.g.) that $V_c$ induces a minimal connected subgraph containing $u_c,v_c$, and~$v$. 
If $v$ is not a cut-vertex of $G[V_c]$, then
\(\chi(\mathcal{V})\) and \(\chi(\mathcal{V})- e(v,c)\) belong to~$\hat F$.
    Otherwise, every path between $u_c$ and $v_c$ in $G[V_c]$ contains $v$.
If~$\gamma_{z,c} = 0$ for all $z \in V_c \cap Z$, then let $\hat V_c$ be the set of vertices of a path in~$G[V_c]$ between $v$ and either $u_c$ or $v_c$.
    Otherwise, there is exactly one $z \in V_c \cap Z$ such that $\gamma_{z,c}=1$, and thus we define $\hat V_c$ as the set of vertices of a path in~$G[V_c]$ between $v$ and either $u_c$ or $v_c$, which does not contain $z$.
In the later case, the vectors 
    \[\sum_{i \in [k]\setminus\{c\}} e(V_i,i) + e(\hat V_c,c) \quad \text{ and } \quad \sum_{i \in [k]\setminus\{c\}} e(V_i,i) + e(\hat V_c,c) - e(v,c)\] 
    belong to $\hat F$.
    In the former case, there exists $b \in C\setminus \{c\}$ such that $V_c \supseteq \{u_b,v_b\}$. 
    Let us assume (w.l.o.g.) that $u_b \notin \hat V_c$.
    Additionally, we assume $V_b=\emptyset$ as one may remove all vertices from $V_b$ since $V_b\cap \{u_b,v_b\} = \emptyset$.
    Hence the  vectors $e'$ and $e' - e(v,c)$ belong to $\hat F$, 
where \[e' = \sum_{i \in [k]\setminus\{b,c\}} e(V_i,i) + e(\hat V_c,c) + e(u_b,b).\]
    The vectors described above show that $\lambda_{v,c}=0$ for all $v \in V(G)\setminus (D \cup Z)$, and~$c \in [k]$.
    Using very similar arguments, one may show the remaining zero entries of $\lambda$, that is, for every $c \in C$,  $v \in Z$ with $\gamma_{v,c}=0$,  or $v \in D$ with~$v \notin \{u_c,v_c\}$.

We next show the non-zero coefficients of $\lambda$.
    Consider $c \in C$, and a vertex~$v \in V(G)$ such that $v$ is a delegate of class $c$.
    Assume that $v=v_c$ and let $\{V_i\}_{i \in [k]}$ be a conforming connected $k$-subpartition such that $v_c \in V_c$ and $u_c \notin V_i$ for all $i \in [k]$.
    The existence of such a conforming subpartition is guaranteed by~\ref{property:conforming2}.
    Note that $c$ is type-2 in this subpartition, and so we can remove all vertices from $V_c$ but~$v_c$ and still obtain a conforming connected $k$-subpartition.
    It follows from this observation that the following vectors are in $\hat F$:
    \[\sum_{i \in [k]\setminus \{c\}} e(V_i, i) + e(v_c,c) \quad \text{ and } \quad \sum_{i \in [k]\setminus \{c\}} e(V_i, i) + e(u_c,c).\]
    This implies that $\lambda_{v_c,c} = \lambda_{u_c,c}$ for all $c \in C$.

    Let $c \in C$, and let $z \in Z$ with $\gamma_{z,c}=1$.
    Consider a conforming connected $k$-subpartition $\{V_i\}_{i \in [k]}$ with $z \in V_c$, and note that $c$ must be type-3 
which implies $V_c\supseteq \{u_c,v_c,z\}$ and $\gamma_{\hat z,c}=0$ for all $\hat z \in (Z\cap V_c)\setminus \{z\}$.
    Thus we can remove all vertices of $V_c$ but one of the delegates, say $u_c$, and still have a conforming connected $k$-subpartition.
    Consequently, it holds that 
    \(\sum_{i \in [k]} e(V_i,i)\) and $\sum_{i \in [k]\setminus\{c\}} e(V_i,i) + e(u_c,c)$ belong to $\hat F$.
    It follows that $-\lambda_{z,c}=\lambda_{v_c,c}=\lambda_{u_c,c}$ for all $c \in C$, and $z \in Z$ with $\gamma_{z,c}=1$.

    Consider now $b,c \in C$ such that $c$ blocks $b$.
    By~\ref{property:blocking-class}, there is a conforming connected $k$-subpartition $\mathcal{V} = \{V_i\}_{i\in [k]}$ such that $V_c\supseteq \{u_b,v_b,u_c,v_c\}$.
    Since $u_c\neq v_c$ and $u_b\neq v_b$, we assume (w.l.o.g.) that $u_b \neq v_c$, and that $V_b=\emptyset$.
    Note that one may remove the class $V_c$ from~$\mathcal{V}$, add classes $\{u_b\}$ and $\{v_c\}$, and obtain a conforming connected $k$-subpartition.
    As a consequence, the vectors $\sum_{i \in [k]} e(V_i,i)$ and $\sum_{i \in [k]\setminus\{c\}} e(V_i,i) + e(u_b,b) + e(v_c,c)$  are in $\hat F$, and so~$\lambda_{u_b,b}=\lambda_{v_c,c}$.
    By~\ref{property:blocking-graph}, the class-blocking graph is connected, and so we have $\lambda_{u_b,b}=\lambda_{v_b,b}=\lambda_{u_c,c}=\lambda_{v_c,c}$ for every~$b,c \in C$.
    Therefore, there is $\lambda_* \in \R$ such that $\lambda=\lambda_* \hat \lambda$, and thus inequality~\eqref{ineq:blocking} induces a facet of $\poly(G,k)$.
 \end{proof}

To conclude this section, we observe that condition~\ref{property:blocking-graph} in the previous theorem is also necessary for facetness.
Indeed, inequality~\eqref{ineq:blocking} given by the tuple $(G,C,\{u_c,v_c\}_{c \in C}, Z,\gamma)$ is the sum of~\eqref{ineq:blocking} given by $(G,C',\{u_c,v_c\}_{c \in C'}, Z,\gamma)$ for each $C'\subseteq C$ such that $C'$ is the set of vertices of a connected component of the class-blocking graph $B$.

 \section{Separation of multiclass inequalities}
\label{sec:separation-multi}

We first show that the separation problem associated with inequalities~\eqref{ineq:multicolor-cut} (if~$S$ is given) is $\NP$-hard 
by  observing that it consists in solving the \textsc{Minimum Multiway Cut} problem (undirected node version) which is defined as follows.

Given a graph $G$, a set of terminals $S\subseteq V(G)$ inducing a stable set in $G$, and weights $w\colon V(G)\setminus S \to \R_\geq$, the problem consists in finding a minimum-weight set of non-terminal vertices $Z \subseteq V(G)\setminus S$ such that there is no path in $G-Z$ between any pair of distinct vertices in $S$, that is, each component of $G-Z$ contains at most one vertex in $S$.
Note that when $|S|=2$, this is exactly the \textsc{Minimum Vertex Cut} problem, and so it is solvable in polynomial time using a maximum flow algorithm.
For $|S|\geq 3$, Garg et al.~\cite{GARG200449} proved that  \textsc{Minimum Multiway Cut} is  $\NP$-hard.

Let us fix $k=1$. 
Given a graph $G$,  a stable set $S\subseteq V(G)$ of $G$ with $|S|\geq 3$, and a vector $\bar x \in \R_\ge^{n}$, the separation problem of inequalities~\eqref{ineq:multicolor-cut} on input $(G,S, \bar x)$ is equivalent to finding a minimum-weight set of vertices $Z^* \subseteq V(G)\setminus S$ such that each vertex of $S$ belongs to a different component of $G-Z^*$, where the weight of each vertex $v \in V(G)\setminus S$ is $\bar x_{v}$.
In other words,  solving the separation of~\eqref{ineq:multicolor-cut} is not easier than   solving \textsc{Minimum Multiway Cut}.
Therefore the separation problem of inequalities~\eqref{ineq:multicolor-cut}, when $S$ is given, is $\NP$-hard  even for $k=1$.

We next describe a simple heuristic to find violated inequalities~\eqref{ineq:multicolor-cut} using separating sets.
A set of vertices $Z \subset V(G)$ is a \emph{separating set} (or \emph{vertex cut}) of a connected graph $G$ if $G-Z$ has more than one component.
Given $k \in \Z_\ge$, $C \subseteq [k]$, a connected graph $G$, and  $\bar x \in \R_\ge^{nk}$, 
the weight of each vertex $v \in V(G)$ is defined as $f(v)=\sum_{c \in C} \bar x^*_{v,c}$.
A natural heuristic approach to find inequalities~\eqref{ineq:multicolor-cut} violated by~$\bar x$ is to first compute a minimum separating set~$Z$ of $G$ with weight function~$f$, and then choose a vertex of maximum weight in each component of $G-Z$ to be part of $S$. 
The problem of finding a minimum separating set can be solved in polynomial time by reducing it to the \textsc{Maximum Stable Set} problem on weighted bipartite graphs as shown by Balas and Souza~\cite{balas2005vertex}. 
Regarding the subset $C\subseteq [k]$, it is not clear to us what is the best choice for this set of classes. 
A simple strategy could be to consider the subsets of $[k]$ in nondecreasing order of cardinality.
Of course, this enumeration approach is viable only when $k$ is small.

In the remainder of this section, we focus on the following separation problem associated with inequalities~\eqref{ineq:blocking} when $C$ and the delegates are already chosen.
Given $k \in \Z_\ge$, $C \subseteq [k]$ with $|C|\ge 1$, a graph~$G$, a collection of pairs of vertices $\{\{u_c, v_c\}\}_{c \in C}$ with $u_c\neq v_c$ such that $\{u_c,v_c\} \notin E(G)$, and $\bar x \in \R^{nk}_\ge$, the separation problem of~\eqref{ineq:blocking}  consists in finding a multicut $Z$ of the pairs $\{\{u_c,v_c\}\}_{c \in C}$ in $G$ that minimizes 
\[ \sum_{v \in Z} \sum_{c \in C}\gamma_{v,c} \: \bar x_{v,c}.\]

At first glance, one might think that this separation problem implies solving the \textsc{Minimum Multicut} problem  defined as follows.
Given a graph $G$ and a collection $Q$ of pairs of distinct non-adjacent vertices in $G$, the goal is to find a minimum-size \emph{multicut} $Z$ of $Q$ in $G$, that is, a set of vertices~$Z \subseteq V(G) \setminus (\bigcup_{q \in Q} q)$ such that, for each pair $\{u,v\} \in Q$, there is no path in $G-Z$ between $u$ and $v$.
C\u{alinescu et al.~\cite{CALINESCU2003333}} 
proved that \textsc{Minimum Multicut} is $\NP$-hard on trees with maximum degree at most~$4$.
However, we next prove that the separation problem of~\eqref{ineq:blocking} is polynomial-time solvable on trees by reducing it to \textsc{Minimum-Cost Flow}.

\begin{theorem}\label{thm:blocking:separation-trees}
Let $k \in \Z_\ge$, let $C \subseteq [k]$ with $|C|\ge 1$, let~$G$ be an $n$-vertex tree, and let~$Q=\{\{u_c, v_c\}\}_{c \in C}$ be a collection of pairs of vertices  with $u_c\neq v_c$ such that $\{u_c,v_c\} \notin E(G)$.
The separation problem of~\eqref{ineq:blocking} can be solved in $\bigO(n^3 \log n \log\log |C|)$ time. 
\end{theorem}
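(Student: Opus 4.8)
The plan is to reduce the separation of \eqref{ineq:blocking} to a single minimum-cost flow computation on an auxiliary network, exploiting the fact that on a tree every pair $\{u_c,v_c\}$ is joined by a \emph{unique} path $P_c$. Since the positive part $\sum_{c\in C}(\bar x_{u_c,c}+\bar x_{v_c,c})$ of the left-hand side is a constant, maximizing the violation amounts to minimizing $\sum_{v\in Z}\sum_{c\in C}\gamma_{v,c}\,\bar x_{v,c}$, which is exactly the objective stated in the theorem. First I would record the two structural facts that drive the reduction: (a) a set $Z$ separates $u_c$ from $v_c$ if and only if $Z$ meets the set $I_c$ of internal vertices of $P_c$, so a feasible $Z$ is precisely a hitting set of $\{I_c\}_{c\in C}$; and (b) class $c$ can block class $b$ exactly when both delegates $u_b,v_b$ lie on $P_c$, i.e.\ when $P_b$ is a subpath of $P_c$. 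These facts recast the choice of $(Z,\text{pivots},\gamma)$ as the combinatorial task of deciding, for each class, whether to charge its own pivot on $P_c$ at cost $\bar x_{p_c,c}$ or to be absorbed at no cost into a longer path covering both of its delegates, the blocking relation being what couples these decisions across classes.

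Next I would build the network announced in the introduction, whose node set consists of the vertices of $G$ together with one extra node per class in $C$. Each class node emits one unit of flow (forcing $P_c$ to be cut), routed either through an arc into some $v\in I_c$ of cost $\bar x_{v,c}$ toward a sink, or, whenever $P_b\subseteq P_c$, through a zero-cost arc that lets the unit of the covered class $b$ pass through the covering class $c$. Flow conservation at the class nodes then encodes exactly which classes pay for a pivot and which are absorbed, while vertex capacities (bounded by the number of delegate-paths through a vertex, hence by $|C|$) keep the charging consistent with the definition of $\gamma$. I would then prove both directions of the correspondence: from any feasible separator with a pivot assignment one reads off an integral flow of equal cost, and, conversely, using the integrality of optimal flows on this network (its constraint matrix is a network matrix, hence totally unimodular), any minimum-cost integral flow decodes into a multicut $Z$ with pivots whose $\gamma$-weighted cost equals the flow value. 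Minimizing the flow therefore solves the separation problem.

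For the running time, the network has $\bigO(n)$ nodes and $\bigO(n^2)$ arcs, integer capacities bounded by $|C|$, and (after clearing denominators) integer costs of polynomial encoding length, so that the cost-dependent factor is $\bigO(\log n)$. Feeding this into a double-scaling minimum-cost flow algorithm gives a bound of the form $\bigO\!\big(N\,M\,\log(N\cdot\mathrm{cost}_{\max})\,\log\log(\mathrm{cap}_{\max})\big)$ with $N=\bigO(n)$, $M=\bigO(n^2)$, $\mathrm{cap}_{\max}\le|C|$, which collapses to the claimed $\bigO(n^3\log n\log\log|C|)$.

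I expect the main obstacle to be the design and verification of the blocking gadget. One must ensure that the zero-cost re-routing is available precisely when $P_b\subseteq P_c$, that a covering class absorbs no more than it is entitled to, and, most delicately, that decoding an optimal integral flow yields a genuine multicut together with a pivot assignment whose induced $\gamma$ coincides with the one used in the objective; in particular the feasibility of the separator, cost-preservation, and integrality must hold simultaneously. Once this correspondence is secured, optimality and the stated complexity follow routinely from standard minimum-cost flow theory.
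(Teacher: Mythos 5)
Your overall architecture matches the paper's: exploit uniqueness of tree paths, note that maximizing the violation reduces to minimizing $\sum_{v\in Z}\sum_{c\in C}\gamma_{v,c}\bar x_{v,c}$, and solve that by a single minimum-cost flow on a network with one node per vertex and one node per class, each class node forced to receive one unit that enters through an internal vertex of $P_c$ at cost $\bar x_{v,c}$. The running-time accounting via double scaling is also the same. The genuine gap is in your treatment of the blocking relation, and it is not a cosmetic one: you have the direction of blocking inverted. By the paper's definition, when $P_b\subseteq P_c$ the \emph{longer}-path class $c$ blocks $b$, and it is therefore $c$ whose coefficients vanish ($\gamma_{z,c}=0$ for every $z$), while the \emph{shorter}-path class $b$ still pays $\bar x_{p_b,b}$ for its pivot. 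Your gadget lets "the unit of the covered class $b$ pass through the covering class $c$ at zero cost," i.e.\ it exempts the contained class from payment and charges the containing one. On the paper's own example $P=v_1\ldots v_5$ with $u_a=v_1,v_a=v_5,u_b=v_2,v_b=v_4$, the correct inequality is $x_{v_1,a}+x_{v_5,a}+x_{v_2,b}+x_{v_4,b}-x_{v_3,b}\le 2$; your network would instead price the cut at $\bar x_{v_3,a}$ and decode to $x_{v_1,a}+x_{v_5,a}-x_{v_3,a}+x_{v_2,b}+x_{v_4,b}\le 2$, which is violated by the integral point with $V_a=\{v_1\}$ and $V_b=\{v_2,v_3,v_4\}$ and hence is not even valid. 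So the oracle you describe optimizes over the wrong family of inequalities.

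The second, related miss is that on a tree there is no "decision" to encode at all: since each pair has a unique path, whether $c$ blocks $b$ depends only on path containment and not on $Z$ or on the choice of pivots, so the set of classes with $\gamma_{\cdot,c}\equiv 0$ is fixed by the input. The paper exploits this with a one-line preprocessing step: whenever $V(P_c)\subseteq V(P_b)$, delete the pair $\{u_b,v_b\}$, because class $b$ contributes nothing to the objective and any admissible separator of $u_c,v_c$ already separates $u_b,v_b$. After this, no class blocks another, every surviving class pays for exactly one pivot on its own path, and the flow network needs no bypass arcs whatsoever — the coupling you identify as "the main obstacle" simply disappears. To repair your proof you should replace the zero-cost re-routing gadget by this preprocessing (or, equivalently, fix the direction so that it is the \emph{containing} class whose unit is carried for free, and then observe that this is forced rather than optional); the rest of your argument — integrality of the flow, the cost-preserving correspondence between integral flows and separator/pivot pairs, and the complexity bound — then goes through as in the paper.
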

\begin{proof}
For each $c \in C$, let us denote by $P_c$ the path in $G$ between $u_c$ and~$v_c$.
     Note that if there exist distinct classes $b,c \in C$ such that $V(P_c)\subseteq V(P_b)$, then we can simply remove the pair $\{u_b,v_b\}$ from $Q$ since every separator~$Z$ of $u_c$ and $v_c$ is also a separator of $u_b$ and $v_b$, and $\gamma_{v,b} = 0$ for all $v \in V(G)$ as $b$ blocks $c$. Therefore, we assume from now on that $V(P_c)\not\subseteq V(P_b)$ for all $b,c \in C$ with $b\neq c$.
    Thus, for each $c \in C$ and $u_c,v_c$-separator $Z$, there is a single vertex $v \in (V(P_c)\setminus \{u_c,v_c\}) \cap Z$ such that $\gamma_{v,c}=1$.

Let us define $V' = \bigcup_{q \in Q} q$. We next construct a network consisting of a directed graph $D$ with vertices $V(D) = (V(G)\setminus V') \cup C \cup \{s,t\}$ and arcs 
\(A(D) = \{(s,v) : v \in V(G)\setminus V'  \} \:\cup \{(c,t) :  c \in C  \} \cup A',\)
where \(A' = \{(v,c) \in  (V(G)\setminus V') \times C :  v \in V(P_c)\}.\)
We define the demands in the network as $d \colon V(D) \to \Z_\ge$ such that $d(s) = -|C|$, $d(t) = |C|$, and $d(v) = 0$ for all $v \in V(D)\setminus \{s,t\}$.
Moreover, the capacity function $r \colon A(D)\to \Z_\ge$ is defined as 

\[
 r(a) = 
  \begin{cases} 
   |C|       & \text{if } a = (s,v) \text{ with } v \in V(G)\setminus V',\\
   1 & \text{otherwise.}
  \end{cases}
\]
Finally, the cost of the arcs in the network is given by a function $w \colon A(D) \to \Z_\ge$ such that $w(a) = \bar x_{v,c}$ if $a = (v,c) \in A'$, and $w(a) = 0$ otherwise.

Since $d$ and $r$ are integer-valued functions, a minimum-cost flow~$f^*$  in the  network $(D,d,r,w)$ such that $f^*(a) \in \Z$ for all $a \in A(D)$ can be computed in $\bigO(n^3 \log n \log\log |C|)$ using the double scaling minimum-cost flow algorithm by 
Ahuja et al.~\cite{Ahuja1992} (see also Ahuja et al.~\cite{ahuja1993network}).
It follows from the definition of~$r$ that the flow of every arc in~$A'$  given by $f^*$ is either zero or one.
Also the capacities guarantee that at most one unit of flow enters any vertex in $C$.
We define $Z = \{v \in V(G)\setminus V' : f^*(v,c) = 1 \text{ for some } c \in C\}$, and observe that~$Z$ is a $u_c,v_c$-separator in $G$ for every $c \in C$ as the demand $d(t)=-d(s)=|C|$.
Moreover, for all $v \in Z$ and $c \in C$, we define $\gamma_{v,c} = f^*(v,c)$.
The cost of $f^*$ is precisely
\[\sum_{a \in A(D)} w(a) f^{*}(a) = \sum_{a \in A'} w(a) f^{*}(a)  = \sum_{v \in V(G)\setminus V'} \sum_{c \in C} \bar x_{v,c} f^*(v,c) = \sum_{v \in Z} \sum_{c \in C}\gamma_{v,c} \bar x_{v,c}. \]

On the other hand, given a set  $Z  \subseteq V(G) \setminus V'$ which is a $u_c,v_c$-separator for every $c \in C$, it induces a (feasible) flow $f$ in $(D,d,r,w)$ where $f(v,c)=\gamma_{v,c}$ for all $v \in V(G)\setminus V'$ and $c \in C$, where $\gamma_{v,c}$ is defined with respect to $Z$.
Therefore, the separation problem of~\eqref{ineq:blocking} can be solved in the same running time of the algorithm for \textsc{Minimum-Cost Flow} when the input graph $G$ is a tree.
 \end{proof}

We conclude this section observing that the algorithm described in the proof of Theorem~\ref{thm:blocking:separation-trees} does not seem to be adaptable to more general graphs.
Indeed, we do not know the computational complexity of the separation problem of~\eqref{ineq:blocking} in general.

 \section{Computational experiments\label{sec:experiments}}

\noindent In this section we investigate the use of the proposed single-class and multiclass inequalities for solving random graph instances generated in the same way as the ones described by Wang et al.~\cite{wang2017imposing} for the Maximum-Weight Connected Subgraph problem, as well as for computationally more challenging instances.
In our experiments, we address the following natural generalization of this classic problem.

\medskip

\noindent\begin{mws-problem}\hfill\\
\textsc{Input:} A graph $G$, a vertex-weight function $w\colon V(G) \to \Z$ (possibly with negative weights), and $k \in \Z_\geq$. \\
\textsc{Output:} A subgraph $H$ of $G$ with at most $k$ connected components.\\ 
\textsc{Objective:} Maximize $w(H) := \sum_{v \in V(H)} w(v)$.
\end{mws-problem}

Wang et al.~\cite{wang2017imposing} reported computational experiments on instances consisting of graphs with 50 vertices that are generated using the Erd\H{o}s–Rényi random graph model with probability $p \in \{0.01, 0.02, \ldots, 0.25\}$,  and  integer weights chosen uniformly at random in the interval $[-50, 50]$.
In our experiments, we consider a set of larger instances consisting of random graphs with 100 vertices
with probability $p \in \{0.01, 0.02, \ldots, 0.10\}$,  integer weights chosen uniformly at random in the interval $[-50, 50]$, and  $k \in \{5, 10, 15, 20, 25\}$.
We remark that every generated instance with $p > 0.10$ is solved by all algorithms within less than 0.1 second, and thus we do not report computational results for such instances.
For each combination of $p$ and $k$, we generate five instances.
In this way there are 250 instances in total of this type.

Inspired by the complete bipartite example presented in~\cite{wang2017imposing}, we have also generated a second instance set containing dense instances that are significantly more challenging than the previous ones.
This set is formed by bipartite graphs with 100 vertices (and both parts of same size) with edges appearing with probability $p \in \{0.05, 0.10, \ldots, 0.50\}$ and $k \in \{5, 10, 15, 20, 25\}$.  The weights of vertices in one part of the bipartition are chosen uniformly at random in the interval $[-50,0]$ and the vertices in the other part with weights in $[0,50]$.
Thus, we have 250 bipartite instances in total.

\subsection{Separation routines}\label{sec:exp:separation}

Let $(G,w,k)$ be an instance of \textsc{mws}, and let $\bar x \in \R^{nk}$ be a vector in a relaxation of $\poly(G,k)$. 
We first describe a simple heuristic procedure to generate cuts from the generalized connectivity inequalities~\eqref{ineq:cut-general}.
Consider a fixed class $c \in [k]$.
Intuitively, this procedure starts from an indegree inequality for $c$ and merges the singleton classes partitioning $V(G)$ (see Proposition~\ref{prop:indegree}) to obtain a coarser partition of $V(G)$ that induces a left-hand side larger than the left-hand side of the corresponding indegree inequality.

The algorithm starts with $S = V(G)$ and a vertex partition $\mathcal{W}=\{\{v\}\}_{v \in V(G)}$.
For every pair of distinct singleton classes $\{u\}$ and $\{v\}$ in $\mathcal{W}$, define
\(\Delta(u,v) = \sum_{z \in I} \bar x_{z,c}\) where $I= \{ z\in N(u)\cap N(v): \bar x_{z,c} \leq \min(\bar x_{u,c}, \:\bar x_{v,c}) \}$.
Suppose without loss of generality that $\bar x_{u,c} \geq \bar x_{v,c}$.
Define $\Delta'(u,v) = \Delta(u,v) - \bar x_{v,c}$ if $\{u,v\} \in E(G)$, and  $\Delta'(u,v) = \Delta(u,v)$ otherwise.
If $\Delta'(u,v) > 0 $, then let $\mathcal{W}'=(\mathcal{W}\setminus\{\{u\}, \{v\}\}) \cup \{\{u,v\}\}$ and let~$S'=S\setminus \{v\}$.
Finally, repeat the procedure above with $S'$ and $\mathcal{W}'$ playing the roles of $S$ and $\mathcal{W}$, respectively, until there are no singleton classes, or $\Delta'(\cdot, \cdot) \leq 0$ for all pairs of singleton classes.
Let~$\mathcal{W}$ be the partition of $V(G)$ produced by the previous procedure for~$\bar x \in \R^{nk}$ and  $c\in [k]$.
To obtain a generalized connectivity inequality~\eqref{ineq:cut-general}, we simply use Algorithm~\ref{alg:separation} on input $(\mathcal{W},c,\bar x)$ to compute the function $\hat d$.

The separation of multiway inequalities~\eqref{ineq:multicolor-cut} follows the general heuristic described in Section~\ref{sec:separation-multi}.
For each $v \in V(G)$, define $w'(v) = \sum_{c \in [k]} \bar x_{v,c}$.
Compute a minimum-weight vertex separator~$Z \subset V(G)$ of $(G,w')$ as proposed by Balas and Souza~\cite{balas2005vertex}.
For each component of $G-Z$, choose a vertex of maximum weight with respect to~$w'$.
These vertices form a stable set~$S$, which, together with~$Z$, defines a multiway inequality.

To separate the connectivity inequalities~\eqref{ineq:cut}, we proceed as  described by Miyazawa et al.~\cite{MIYAZAWA2021826}.
Fix a class $c \in [k]$, and construct a network $D$ with capacities $g\colon A(D) \to \Q_\ge \cup\{\infty\}$ as follows.
Define $V(D)=\{v_1, v_2 \colon v \in V(G)\}$ and $A(D) = A_1 \cup A_2$, where
$A_1 = \{(u_2,v_1), (v_2,u_1) \colon \{u,v\} \in E(G)\}$ and
$A_2 = \{(v_1, v_2) \colon v \in V(G)\}$.  
Finally, define $g(a) = \bar{x}_{v, c}$ if~$a = (v_1, v_2) \in A_2$; and $g(a) = \infty$ otherwise.  
Note that each arc in~$D$ with a finite capacity is
associated with a vertex of~$G$.  
For each pair of distinct vertices~$u,v \in V(G)$ with $\{u,v\} \notin E(G)$ with $\bar x_{u,c}+\bar x_{v,c} > 1$, compute in $D$ a
minimum (edge) cut between $u_1$ and $v_2$.  
If the weight of such a cut is
smaller than $\bar{x}_{u, c} + \bar{x}_{v, c} - 1$, then it is
finite and the vertices of~$G$ associated with the arcs in this cut
define a $(u,v)$-separator~$S$ in~$G$ that violates a connectivity
inequality for $u,v$ and $c$.

We use Goldberg's preflow algorithm implemented by LEMON Graph Lib.\ 1.3.1 to compute a minimum cut in $D$, and move to the next class when we find the first pair of vertices whose  corresponding connectivity inequality is violated.

\subsection{Computational environment and results}

All algorithms are implemented in C++ using LEMON Graph Lib.\ 1.3.1, and Gurobi 10 as the MILP solver with all Gurobi parameters set as default.
The experiments are carried out on a computer running Linux Ubuntu 22.04 LTS (64-bit) equipped with Intel Core i7-4790 at 3.6 GHz and 8GB of RAM\@.
The time limit for each instance is 900 seconds.

Let us denote by \textsc{bc} the 
branch-and-cut algorithm based on the following formulation for \textsc{mws}: $\max \sum_{v \in V(G)} \sum_{c \in [k]} w(v) x_{v,c}$ with $x \in \{0,1\}^{nk}$ satisfying~\eqref{ineq:cover} and~\eqref{ineq:cut}, where inequalities~\eqref{ineq:cut} are separated as described in the previous section.
The branch-and-cut algorithm \textsc{bc} with the exact separation of the indegree inequalities is denoted by \textsc{bc+i}.
We denote by \textsc{bc+g} the algorithm \textsc{bc} augmented with the generalized connectivity cuts generated with the heuristic described in  Section~\ref{sec:exp:separation}.
Finally, \textsc{bc+m} denotes the branch-and-cut algorithm including both the generalized connectivity inequalities and the multiway inequalities separated as discussed in Section~\ref{sec:exp:separation}.  

We use performance profiles introduced by Dolan and Mor{\'e}~\cite{dolan2002benchmarking} to compare the running times of the proposed algorithms. 
Let $A$ be a set of algorithms and $P$ a non-empty set of instances.
For each instance $p \in P$ and algorithm $a \in A$, $t_{p,a}$ denotes the running time required by $a$ to solve $p$.
The performance ratio of an algorithm~$a \in A$ on an instance~$p \in P$ is defined as~$r_{p,a} = t_{p,a}/\min\{t_{p,b} :  b \in A\}$.
In case $p$ is not solved by $a$, we set $r_{p,a}=+\infty$.
For every $a \in A$ and $\tau \geq 1$, define
\[\rho_{a}(\tau) = |\{p \in\textbf{} P : r_{p,a}\leq \tau\}|/|P|. \]
The performance profile of an algorithm $a\in A$ corresponds to cumulative distribution function~$\rho_a$.

\perfset{runtime-random-sparse-data-ext.tex}

    \begin{figure}[t!]
        \centering
        \begin{subfigure}[t]{0.45\textwidth}
        \centering
        \begin{tikzpicture}
            \begin{axis}[height=6cm,
            extra x ticks={1},
grid=both, 
no marks, xlabel={Normalized time $\tau$}, ylabel={Proportion of  instances $\rho$}, xmin=1, xmax=20,
            label style={font=\footnotesize},
            tick label style={font=\footnotesize},    title style={font=\small},
            legend style={font=\footnotesize, at={(0.95,0.3)},anchor=east}]
                \addprofiles{4}{20}
                \legend{\textsc{bc+m}, \textsc{bc+g}, \textsc{bc+i}, \textsc{bc}}
            \end{axis}
        \end{tikzpicture}
        \end{subfigure}
\hfill
        \begin{subfigure}[t]{0.45\textwidth}
        \centering
        \begin{tikzpicture}            
            \begin{axis}[height=6cm,
            extra x ticks={1},
grid=both,no marks,
            xlabel={Normalized time $\tau$}, 
xmin=1, xmax=100,
            label style={font=\footnotesize},
            tick label style={font=\footnotesize},    title style={font=\footnotesize},
            legend style={font=\footnotesize, at={(0.95,0.3)},anchor=east}]
                \addprofiles{4}{100}
                \legend{\textsc{bc+m}, \textsc{bc+g}, \textsc{bc+i}, \textsc{bc}}
            \end{axis}
\end{tikzpicture}
        \end{subfigure}\caption{Performance profiles for the random graph instances.}
\label{fig:perf-profile-random}
    \end{figure} \begin{filecontents}{gap-multi.csv}
1.53,2.06,4.39,1.24,0.2,0.43,4.69,9.04,8.41,3.17,5.27,0.79,0.6,0.3,3.3,0.75
\end{filecontents}
\begin{filecontents}{gap-gencon.csv}
7.89,12.92,2.43,6.99,0.12,6.18,5.57,2.71,0.1,0.43,8.56,1.16,14.49,8.41,1.06,0.34,0,0.6,0.3,3.89
\end{filecontents}
\begin{filecontents}{gap-ind.csv}
6.62,2.39,0.43,2.43,13.51,3.7,5.75,11.93,12.47,8.36,1.65,0.1,0,3.12,0.43,0,0,6.13,1.16,0,9.45,2.18,27.9,12.53,3.17,0.34,0.97,0.94
\end{filecontents}
\begin{filecontents}{gap-con.csv}
7.61,2.39,12.86,2.43,13.29,27.27,5.75,17.22,12.7,1.65,0.1,9.11,3.12,0.43,1.16,13.24,0,10.82,0.3,0.94
\end{filecontents}

\begin{figure*}[t!]
\centering
\begin{subfigure}[t]{1\textwidth}
\centering
\begin{tikzpicture}
	\pgfplotstableread[col sep=comma]{gap-multi.csv}\csvdataA
\pgfplotstabletranspose\datatransposedA{\csvdataA}

	\pgfplotstableread[col sep=comma]{gap-gencon.csv}\csvdataB
\pgfplotstabletranspose\datatransposedB{\csvdataB}

    \pgfplotstableread[col sep=comma]{gap-ind.csv}\csvdataC
\pgfplotstabletranspose\datatransposedC{\csvdataC}

    \pgfplotstableread[col sep=comma]{gap-con.csv}\csvdataD
\pgfplotstabletranspose\datatransposedD{\csvdataD}
    
	\begin{axis}[
        y=0.2cm,
        x=1.2cm,
        xminorgrids=true,
		boxplot/draw direction = y,
		x axis line style = {opacity=0},
		axis x line* = bottom,
		axis y line* = left,
enlarge y limits={abs=0.4cm},
		ymajorgrids,
		xtick = {1, 2, 3, 4},
		xticklabel style = {align=center, font=\footnotesize, rotate=0},
		xticklabels = {\textsc{bc+m}, \textsc{bc+g}, \textsc{bc+i}, \textsc{bc}},
		xtick style = {draw=none}, ylabel = {Gap \%},
boxplot/whisker range={100},
label style={font=\footnotesize},
title style={font=\footnotesize},
        legend style={font=\large}, 
        x tick label style={font=\small},
        y tick label style={font=\footnotesize},
	]
        \addplot+[boxplot, fill, draw=black] table[y index=1] {\datatransposedA};
		\addplot+[boxplot, fill, draw=black] table[y index=1] {\datatransposedB};
        \addplot+[boxplot, fill, draw=black] table[y index=1] {\datatransposedC};
        \addplot+[boxplot, fill, draw=black] table[y index=1] {\datatransposedD};
\end{axis}
\end{tikzpicture}
\end{subfigure}
\caption{Gaps for the random graph instances.}\label{fig:gaps}
\end{figure*} 
Out of the first set of 250 instances formed by (general) random graphs, 
the total number of instances not solved by \textsc{bc}, \textsc{bc+i}, \textsc{bc+g}, and \textsc{bc+m} is 23, 25, 25, and 18, respectively.
The performance profiles depicted in Figure~\ref{fig:perf-profile-random} show that \textsc{bc+m}  is the fastest solving method for over $75\%$ of the instances, and clearly outperforms \textsc{bc+g}, \textsc{bc+i}, and \textsc{bc}.
The second best algorithm is \textsc{bc+g}, which is the fastest for over $70\%$ of the instances.
Figure~\ref{fig:gaps} depicts the final gaps for the instances not solved to optimality by means of box plots; we observe that \textsc{bc+m} produces smaller gaps than \textsc{bc+g}, \textsc{bc+i}, and \textsc{bc}.
The average gaps across all 250 instances computed by  \textsc{bc}, \textsc{bc+i}, \textsc{bc+g}, and \textsc{bc+m} are 0.57\%, 0.55\%, 0.34\%, and 0.18\%, respectively.

\perfset{runtime-bipartite-data.tex}

    \begin{figure}[t!]
        \centering
        \begin{subfigure}[t]{0.45\textwidth}
        \centering
        \begin{tikzpicture}
            \begin{axis}[height=6cm,
            extra x ticks={1},
grid=both, 
no marks, xlabel={Normalized time $\tau$}, ylabel={Proportion of  instances $\rho$}, xmin=1, xmax=20,
            label style={font=\footnotesize},
            tick label style={font=\footnotesize},    title style={font=\small},
            legend style={font=\footnotesize, at={(0.95,0.3)},anchor=east}]
                \addprofiles{4}{20}
                \legend{\textsc{bc+m}, \textsc{bc+g}, \textsc{bc+i}, \textsc{bc}}
            \end{axis}
        \end{tikzpicture}
        \end{subfigure}
\hfill
        \begin{subfigure}[t]{0.45\textwidth}
        \centering
        \begin{tikzpicture}            
            \begin{axis}[height=6cm,
            extra x ticks={1},
grid=both,no marks,
            xlabel={Normalized time $\tau$}, 
xmin=1, xmax=100,
            label style={font=\footnotesize},
            tick label style={font=\footnotesize},    title style={font=\small},
            legend style={font=\footnotesize, at={(0.95,0.3)},anchor=east}]
                \addprofiles{4}{100}
                \legend{\textsc{bc+m}, \textsc{bc+g}, \textsc{bc+i}, \textsc{bc}}
            \end{axis}
\end{tikzpicture}
        \end{subfigure}\caption{Performance profiles for the bipartite instances with $p \in \{0.05, 0.10, 0.15, 0.20, 0.25\}$.}
        \label{fig:perf-profile-bipartite-sparse}
    \end{figure} 

\perfset{runtime-bipartite-dense-data.tex}

    \begin{figure}[t!]
        \centering
        \begin{subfigure}[t]{0.45\textwidth}
        \centering
        \begin{tikzpicture}
            \begin{axis}[height=6cm,
            extra x ticks={1},
grid=both, 
no marks, xlabel={Normalized time $\tau$}, ylabel={Proportion of  instances $\rho$}, xmin=1, xmax=20,
            label style={font=\footnotesize},
            tick label style={font=\footnotesize},    title style={font=\small},
            legend style={font=\footnotesize, at={(0.95,0.5)},anchor=east}]
                \addprofiles{4}{20}
                \legend{\textsc{bc+m}, \textsc{bc+g}, \textsc{bc+i}, \textsc{bc}}
            \end{axis}
        \end{tikzpicture}
        \end{subfigure}
\hfill
        \begin{subfigure}[t]{0.45\textwidth}
        \centering
        \begin{tikzpicture}            
            \begin{axis}[height=6cm,
            extra x ticks={1},
grid=both,no marks,
            xlabel={Normalized time $\tau$}, 
xmin=1, xmax=100,
            label style={font=\footnotesize},
            tick label style={font=\footnotesize},    title style={font=\small},
            legend style={font=\footnotesize, at={(0.95,0.5)},anchor=east}]
                \addprofiles{4}{100}
                \legend{\textsc{bc+m}, \textsc{bc+g}, \textsc{bc+i}, \textsc{bc}}
            \end{axis}
\end{tikzpicture}
        \end{subfigure}\caption{Performance profiles for the bipartite instances with $p \in \{0.30, 0.35, 0.40, 0.45, 0.50\} $.}
        \label{fig:perf-profile-bipartite-dense}
    \end{figure} \begin{filecontents}{gap-bip-multi.csv}
5.67,18.01,5.1,4,12.06,2.78,2.8,1.46,4.54,1.85,5.16,3.61,3.8,2.06,0.07,0.36,1.77,1,0.66,0.57,2.75,2.1,1.4,0.36,0.56,0.86,0.23,0.29,1.71,0.88,0.92,0.38,2.41,1.18,1.45,0.45,0.25,4.16,1.86,0.35,1.63,3.18,2.45,0.85,0.81,2.48,4.23,0.84,0.25,0.49,0.81,0.14,0.07,0.08,0.32,0.78,0.1,0.08,0.62
\end{filecontents}
\begin{filecontents}{gap-bip-gencon.csv}
16.04,4.27,3.81,2.35,3.84,3.34,2.25,2.98,1.97,4.25,1.85,5.56,3.99,3.26,2.14,0.44,0.65,1.77,1.33,0.66,0.57,3.59,2.02,2.13,0.86,0.72,0.95,0.39,0.17,1.11,0.29,2.12,1.44,1.08,0.76,2.21,1.38,1.14,0.75,0.16,0.16,3.41,2.27,1.89,0.6,1.38,0.3,17.02,0.15,1.27,0.42,0.72,2.48,0.98,0.35,0.84,0.74,0.08,0.08,0.96,0.29,0.21,0.16,0.31,0.07,0.71,0.32,0.23,0.08,0.97,0.08,0.09,0.68,0.44,0.22,0.32,0.4,0.62,0.08
\end{filecontents}
\begin{filecontents}{gap-bip-ind.csv}
9.13,15.13,16.65,48.19,14.39,52.15,12.36,21.48,11.9,5.65,5.41,4.44,4.56,3.6,6.47,5.92,11.24,4.74,6.45,4.8,0.22,0.79,1.94,2.08,0.9,0.84,2.47,2.65,1.29,1.03,0.46,0.17,0.37,3.18,1.76,1.93,1.06,2.87,1.98,0.38,0.55,5.72,0.77,9.3,3.42,9.74,4.84,0.67,14.03,8.16,5.98,8.21,4.56,2.74,4.13,4.11,2.24,2.54,1.42,2.89,0.62,5.3,3.12,0.52,3.11,1.41,0.5,0.21,0.14,1.4,0.24,1.33,0.37,0.43,0.08,0.15,0.79,0.23,1.36,0.58,0.08,0.23,0.08,0.24,0.08
\end{filecontents}
\begin{filecontents}{gap-bip-con.csv}
18.74,13.27,26.66,31.75,21.67,15.85,25.04,32.47,24.72,17.12,7.63,7.54,5.78,5.14,10.74,7.12,13.19,12,6.16,6.26,0.94,3.03,3,0.33,1.14,2.22,2.47,1.4,2.16,1.58,1.08,0.25,0.51,1.63,2.73,1.82,1.2,2.55,0.91,1.5,0.63,0.74,4.77,6.31,14.76,38.83,7.96,3.37,21.46,21,10.76,43.66,7.16,64.54,2.32,3.8,3.28,3.19,4.23,2.84,1.58,8.99,4.72,2.71,4.12,2.74,1.89,0.5,0.29,1.37,2.06,1.39,0.98,29.24,0.4,0.16,1.18,0.37,1.36,1,0.64,1.1,0.78,0.55,0.85,0.62,0.31,0.08,0.08,0.22,0.15,0.07,1.67,1.1,0.71,1.28,0.98,0.6,1.85,0.87,0.58,0.89,0.52,0.37,0.24,0.24,0.08,0.57,0.41,0.25,0.86,0.55,0.47,0.39,0.6,0.34,0.34,0.08,0.32,0.24,0.16,0.08,0.31,0.08,1.44,0.76,0.42,0.42,0.32,0.24,0.16,0.16,0.96,0.66,0.37,0.29,1.06,1.28,1.04,0.56,0.32,0.73,0.36,0.36,0.15,1.33,2.57,1.78,1.15,1,0.35,0.09,0.09,0.09,0.39,0.16,0.16,0.16,0.08,0.08,0.08,0.08,0.46,0.37,0.28,0.18,0.71,0.43,0.28,0.28,0.6,0.45,0.3,0.23,0.29,0.29,0.15,0.15,0.08,0.08,0.17,0.17,0.62,0.41,0.27,0.27,0.08,0.08,0.28,0.28,0.28,0.24,0.08,0.08,0.08
\end{filecontents}

\begin{figure*}[t!]
\centering
\begin{subfigure}[t]{1\textwidth}
\centering
\begin{tikzpicture}
	\pgfplotstableread[col sep=comma]{gap-bip-multi.csv}\csvdataA
\pgfplotstabletranspose\datatransposedA{\csvdataA}

	\pgfplotstableread[col sep=comma]{gap-bip-gencon.csv}\csvdataB
\pgfplotstabletranspose\datatransposedB{\csvdataB}

    \pgfplotstableread[col sep=comma]{gap-bip-ind.csv}\csvdataC
\pgfplotstabletranspose\datatransposedC{\csvdataC}

    \pgfplotstableread[col sep=comma]{gap-bip-con.csv}\csvdataD
\pgfplotstabletranspose\datatransposedD{\csvdataD}
    
	\begin{axis}[
y=0.1cm,
x=1.2cm,
        xminorgrids=true,
		boxplot/draw direction = y,
		x axis line style = {opacity=0},
		axis x line* = bottom,
		axis y line* = left,
enlarge y limits={abs=0.4cm},
		ymajorgrids,
		xtick = {1, 2, 3, 4},
		xticklabel style = {align=center, font=\footnotesize, rotate=0},
		xticklabels = {\textsc{bc+m}, \textsc{bc+g}, \textsc{bc+i}, \textsc{bc}},
		xtick style = {draw=none}, ylabel = {Gap \%},
boxplot/whisker range={100},
label style={font=\small},
        tick label style={font=\small},    title style={font=\small},
        legend style={font=\footnotesize},
        x tick label style={font=\small},
        y tick label style={font=\footnotesize},
	]
        \addplot+[boxplot, fill, draw=black] table[y index=1] {\datatransposedA};
		\addplot+[boxplot, fill, draw=black] table[y index=1] {\datatransposedB};
        \addplot+[boxplot, fill, draw=black] table[y index=1] {\datatransposedC};
        \addplot+[boxplot, fill, draw=black] table[y index=1] {\datatransposedD};
\end{axis}
\end{tikzpicture}
\end{subfigure}
\caption{Gaps for the bipartite instances.}\label{fig:boxplot-bip}
\end{figure*}

To better assess the impact of the proposed cuts, we also use the set of 250 bipartite instances, which, as we shall see, are computationally more challenging than the previous random graph instances.
The performance profiles depicted in Figure~\ref{fig:perf-profile-bipartite-sparse} and~\ref{fig:perf-profile-bipartite-dense}  show that
\textsc{bc+m} is the fastest solving method for most instances, clearly outperforming all other algorithms.
The number of bipartite instances not solved by \textsc{bc}, \textsc{bc+i}, \textsc{bc+g}, and \textsc{bc+m} is 195, 85, 79, and 59, respectively; most of these instances have $p \leq 0.25$.
From Figure~\ref{fig:boxplot-bip} we see that the positive final gaps  produced by \textsc{bc+m} and \textsc{bc+g} are considerably smaller than the ones yielded by \textsc{bc+i} and \textsc{bc}.
The average gaps across all 250 instances for the algorithms \textsc{bc}, \textsc{bc+i}, \textsc{bc+g}, and \textsc{bc+m} are 2.97\%, 1.65\%, 0.53\%, and 0.49\%, respectively.

The foregoing computational results show that both general connectivity and multiway inequalities can lead to significant speedups in running times and tighter gaps, even when separated using simple heuristics like those described in Section~\ref{sec:exp:separation}.

 \section{Further research}
\label{sec:conclusion}

The ILP formulation of $\poly(G,k)$ given by~\eqref{ineq:cover},~\eqref{ineq:cut}, and $x \in \{0,1\}^{nk}$ is natural and it can be adapted to model additional specific constraints of many problems involving connected partitions.
Additionally, one may extend it to model connected $k$-subpartitions where a class $c \in [k]$ has to be $\kappa_c$-connected (with $\kappa_c \in \Z_\geq$) by simply imposing 
\begin{align*}
    \kappa_c(x_{u,c} +  x_{v,c} -1 )  \leq  \sum_{z \in Z} x_{z,c}  & \quad  \forall  \{u,v\} \notin E(G), Z \in \Gamma(u,v). 
\end{align*}
Similar inequalities were used to model a problem in wireless sensor networks which is related to connected dominating sets~(see Ahn and Park~\cite{Ahn2015}).
One may also have different costs/gains when assigning a vertex to a class as in the case of the \textsc{Convex Recoloring} problem.
In general, one may easily ensure different properties to the classes in the  (sub)partition using variables indexed by the vertices and classes.

If $k$ is large and the classes in the subpartition have identical properties, the studied formulation suffers from symmetry in the sense that any permutation of the classes of a solution lead to an equivalent solution. 
To avoid such symmetries, one could alternatively model~$\poly(G,k)$ using binary variables $\tilde x_{uv} \in \{0,1\}$ for every $u,v \in V(G)$ to identify whether $u$ and~$v$ belong to the same class in a connected $k$-subpartition. 
A future research direction is to find valid inequalities to strengthen such a formulation and investigate the properties of instances for which one formulation outperforms the other.

Another natural direction for further research is to investigate new classes of valid inequalities since there exist facets of $\poly(G,k)$ different from the ones we studied.
Using PORTA~\cite{PORTA},  we found that the following inequalities are facet-defining for the graph depicted in Figure~\ref{fig:graph-porta} with classes $a$ and $b$:
\begin{align*}
     x_{v_1,a}+ x_{v_2,a}+x_{v_3,a}- x_{v_4,a}    +2 x_{v_1,b}+2x_{v_2,b}+x_{v_3,b}-2x_{v_4,b}-2x_{v_5,b} & \leq 3,\\
    2x_{v_1,a}+2x_{v_2,a}+x_{v_3,a}-2x_{v_4,a}-2x_{v_5,a}+ x_{v_1,b}+ x_{v_2,b}+x_{v_3,b}- x_{v_4,b}     & \leq 3.
\end{align*}
Finally, to implement more efficient solving methods, it would be necessary to better understand the computational complexity of the separation problems associated with the proposed inequalities, and devise good heuristics to tackle such problems.

\begin{figure}[t!]
    \centering
    \begin{tikzpicture}[
auto,
            node distance = 1.5cm, semithick, scale=0.8, 
            inner sep=1pt
]
\tikzstyle{every state}=[
            draw = black,
            thick,
minimum size = 4mm,
            scale=0.9
        ]

        \node[state] (v4) {$v_4$};
        \node[state] (v1) [above right of=v4] {$v_1$};
        \node[state] (v2) [below right of=v4] {$v_2$};
        \node[state] (v3) [below left of = v4] {$v_3$};
\node[state] (v5) [below right of=v1] {$v_5$};

        \path (v4) edge node{} (v1);
        \path (v4) edge node{} (v2);
        \path (v4) edge node{} (v3);
        \path (v5) edge node{} (v1);
        \path (v5) edge node{} (v2);
        
    \end{tikzpicture}
\caption{A graph inducing a polytope (with $k=2$) that has facets not yet 
    identified.\label{fig:graph-porta}}
\end{figure}

\section*{Statements and Declarations}
\textbf{Competing interests.} The authors declare that they have no competing interest. 
\hfill \\

\noindent\textbf{Availability of data and materials.} 
The dataset used in this study was synthetically generated. While the generated data is not publicly available at this time, the methodology and generation process are described in detail in the manuscript to ensure reproducibility.

\bibliographystyle{abbrvnat}      
\bibliography{biblio}

\end{document}